\newtheorem{thm}{Theorem}[section]
\newtheorem{prop}[thm]{Proposition}
\newtheorem{lem}[thm]{Lemma}
\newtheorem{cor}[thm]{Corollary}
\newtheorem{conj}[thm]{Conjecture}
\theoremstyle{definition}
\newtheorem{definition}[thm]{Definition}
\theoremstyle{remark}
\theoremstyle{definition}
\newtheorem{case}{Case}
\numberwithin{equation}{section}
\DeclareSymbolFont{AMSb}{U}{msb}{m}{n}
\DeclareMathSymbol{\N}{\mathbin}{AMSb}{"4E}
\DeclareMathSymbol{\Z}{\mathbin}{AMSb}{"5A}
\DeclareMathSymbol{\R}{\mathbin}{AMSb}{"52}
\DeclareMathSymbol{\Q}{\mathbin}{AMSb}{"51}
\DeclareMathSymbol{\I}{\mathbin}{AMSb}{"49}
\DeclareMathSymbol{\C}{\mathbin}{AMSb}{"43}
\DeclareMathSymbol{\F}{\mathbin}{AMSb}{"46}
\DeclareMathOperator{\Vol}{Vol}
\newcommand*\diff{\mathop{}\!\mathrm{d}}
\newcommand{\im}{\mathbf{i}}
\newcommand{\me}{\mathrm{e}}
\newenvironment{sproof}{%
  \proof}{\endproof}
\def\@tocline#1#2#3#4#5#6#7{\relax
  \ifnum #1>\c@tocdepth 
  \else
    \par \addpenalty\@secpenalty\addvspace{#2}%
    \begingroup \hyphenpenalty\@M
    \@ifempty{#4}{%
      \@tempdima\csname r@tocindent\number#1\endcsname\relax
    }{%
      \@tempdima#4\relax
    }%
    \parindent\z@ \leftskip#3\relax \advance\leftskip\@tempdima\relax
    \rightskip\@pnumwidth plus4em \parfillskip-\@pnumwidth
    #5\leavevmode\hskip-\@tempdima
      \ifcase #1
       \or\or \hskip 1em \or \hskip 2em \else \hskip 3em \fi%
      #6\nobreak\relax
    \dotfill\hbox to\@pnumwidth{\@tocpagenum{#7}}\par
    \nobreak
    \endgroup
  \fi}
\begin{document}

\title{Refinements of the 2-dimensional Strichartz Estimate on the Maximum Wave Packet}

\author{Hong Wang}
\address{Department of Mathematics\\
Massachusetts Institute of Technology\\
Cambridge, MA 02139}
\email[H.~Wang]{hongwang@mit.edu}

\author{Lingfu Zhang}
\address{Department of Mathematics\\
Massachusetts Institute of Technology\\
Cambridge, MA 02139}
\email[L.~Zhang]{lfzhang@mit.edu}

\maketitle


\begin{abstract}
The Strichartz estimates for Schr\"{o}dinger equations can be improved when the data is spread out in either physical or frequency space. In this paper we give refinements of the 2-dimensional homogeneous Strichartz estimate on the maximum size of a single wave packet. Different approaches are used in the proofs, including arithmetic approaches, polynomial partitioning, and the $l^2$ Decoupling Theorem, for different cases. We also give examples to show that the refinements we obtain cannot be further improved when $2 \leq p \leq 4$ and $p = 6$.
\end{abstract}


\begin{center}
\begin{minipage}[t]{0.85\linewidth}
\tableofcontents
\end{minipage}
\end{center}


\bigskip
\section{Introduction}

We start with the \emph{initial value problem} (IVP) of the $n + 1$ dimensional \emph{linear Sch\"ordinger equation}:
\begin{equation}  \label{eq:lse}
\begin{cases}
\im u_t - \Delta u = 0, \\
u(x, 0) = u_0(x)
\end{cases}
\end{equation}
where $u : \R^n \times \R \rightarrow \C$ is the unknown, $u_0 : \R^n \rightarrow \C$ is the given initial data, and $\Delta = \sum_{i=1}^n \frac{\partial^2}{\partial x_i^2}$ is the Laplacian operator.
Assuming that $u_0$ is Schwartz, 
denote $f = \widehat{u}$ to be the Fourier transform. As a classical result, 
the solution to (\ref{eq:lse}) is given by
\begin{equation}
u(x, t) = \int_{\R^n} \me^{\im(|\omega|^2 t + \omega \cdot x)} f(\omega) \diff \omega  ,
\end{equation}
see, e.g. \cite[Section 4.3]{book:857896} \cite[Section 2.2]{tao2006local}, in slightly different notations.
The well-known Strichartz estimate bounds the ($n+1$ dimensional) $L^p$ norm of the solution $u$ by the ($n$-dimensional) $L^2$ norm of $f$.
\begin{thm} \cite[Theorem 1]{strichartz1977restrictions} \label{thm:str}
For $p = \frac{2(n+2)}{n}$, there is
\begin{equation}  \label{eq:ep1}
\| u \|_{L^p(\R^{n}\times \R)} \lesssim \| f \|_{L^2(\R^n)}  ,
\end{equation}
and the constant behind $\lesssim$ only relies on $p$ and $n$.
\end{thm}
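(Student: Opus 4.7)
The plan is to prove Theorem \ref{thm:str} via the classical $TT^*$ argument, combining a dispersive kernel estimate with the Hardy-Littlewood-Sobolev inequality in time. Writing $S(t) u_0(x) := u(x,t)$ for the solution operator, one has $S(t) u_0 = K_t \ast u_0$ (up to a normalization constant), where
\begin{equation*}
K_t(x) = \int_{\R^n} \me^{\im(|\omega|^2 t + \omega \cdot x)} \diff \omega.
\end{equation*}
First I would compute $K_t$ explicitly by completing the square in $\omega$, reducing to a Gaussian integral (interpreted via the standard regularization $\me^{-\varepsilon |\omega|^2}$ and a limit $\varepsilon \to 0^+$). This yields the pointwise bound $|K_t(x)| \lesssim |t|^{-n/2}$, hence the dispersive estimate
\begin{equation*}
\|S(t) u_0\|_{L^\infty_x} \lesssim |t|^{-n/2} \|u_0\|_{L^1_x}.
\end{equation*}
Plancherel's theorem simultaneously gives the $L^2$ isometry $\|S(t) u_0\|_{L^2_x} \lesssim \|u_0\|_{L^2_x}$.

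Interpolating these two bounds via Riesz-Thorin yields
\begin{equation*}
\|S(t) u_0\|_{L^p_x} \lesssim |t|^{-n(1/2 - 1/p)} \|u_0\|_{L^{p'}_x}
\end{equation*}
for every $p \in [2, \infty]$ with conjugate exponent $p'$. Now viewing $T: u_0 \mapsto u = S(\cdot) u_0$ as a linear map $L^2_x \to L^p_{t,x}$, the $TT^*$ principle says $T$ is bounded precisely when $TT^*: L^{p'}_{t,x} \to L^p_{t,x}$ is bounded. A direct computation gives
\begin{equation*}
(TT^* F)(t, \cdot) = \int_{\R} S(t - s) F(s, \cdot) \diff s,
\end{equation*}
so Minkowski's inequality in $x$ followed by the interpolated dispersive estimate yields, for each fixed $t$,
\begin{equation*}
\|(TT^* F)(t, \cdot)\|_{L^p_x} \lesssim \int_{\R} |t - s|^{-n(1/2 - 1/p)} \|F(s, \cdot)\|_{L^{p'}_x} \diff s.
\end{equation*}

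The right-hand side is a one-dimensional fractional integral applied to $s \mapsto \|F(s, \cdot)\|_{L^{p'}_x}$, and Hardy-Littlewood-Sobolev on $\R$ sends $L^{p'} \to L^p$ precisely when the kernel exponent satisfies $n(1/2 - 1/p) = 2/p$, which solves to $p = 2(n+2)/n$, matching the theorem; taking $L^p$ norms in $t$ then closes the argument. The main technical point, and essentially the only obstacle, is verifying that the kernel exponent $\alpha = n/(n+2)$ lies strictly in $(0, 1)$ so that HLS is directly applicable (it does for every $n \geq 1$), together with carefully justifying the $TT^*$ duality and Fubini interchanges by first restricting to Schwartz data and extending by density. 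No new machinery is required beyond Plancherel, Gaussian evaluation, complex interpolation, $TT^*$, and HLS.
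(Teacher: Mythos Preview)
Your argument is correct: this is the standard $TT^*$/dispersive/Hardy--Littlewood--Sobolev proof of the homogeneous Strichartz estimate, and all the exponents check out for $p = 2(n+2)/n$.

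There is nothing to compare against, however, because the paper does not prove this theorem. It is quoted as a known result with a citation to Strichartz's original paper, and the authors only remark in passing that it can alternatively be obtained from the Tomas--Stein restriction theorem. Your $TT^*$ route is essentially Strichartz's own (and the modern textbook standard); the Tomas--Stein route the paper alludes to instead views $u$ as the inverse Fourier transform of a measure on the paraboloid and invokes the $L^2(\diff\sigma) \to L^p(\R^{n+1})$ extension estimate directly. Both are classical and neither is developed in the paper.
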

This result can also be derived by the Tomas-Stein theorem in the restriction problem (see e.g., \cite{stein1986oscillatory} \cite{tomas1975restriction}, and see \cite{tao2004some} for a survey of the restriction problem).
Let $E: L^2(\R^n) \rightarrow L^p(\R^{n+1})$ denote the operator sending $f$ to $u$, and it is also referred to as the extension operator in some literature.

In this text we focus on dimension $n = 1$ (thus $2$ time-space dimensions), where $p=6$ in Theorem \ref{thm:str}.
As Fourier transform is unitary, and the space $L^2$ norm of the solution to (\ref{eq:lse}) is conserved, there is
\begin{equation}   \label{eq:ep2}
\| Ef \|_{L^2(B_R)} \leq \| Ef \|_{L_x^2L_t^2([-R, R])} = (2R)^{\frac{1}{2}} \| f \|_{L^2} ,
\end{equation}
where $B_R$ is the ball centered at the origin with radius $R$.
Applying H\"older's inequality to (\ref{eq:ep1}) and (\ref{eq:ep2}), there is
\begin{equation}  \label{eq:trf}
\| Ef \|_{L^p(B_R)} \lesssim R^{\frac{3}{2p} - \frac{1}{4}} \| f \|_{L^2}  ,
\end{equation}
for any $2\leq p \leq 6$ and $R > 0$.

The estimate (\ref{eq:trf}) is sharp for all $2 \leq p \leq 6$, but if we make additional assumptions about the function $f$, then we can prove better estimates.
Specifically, we break $f$ into wave packets
\begin{equation}
f = \sum_{\theta, v} f_{\theta, v}  ,
\end{equation}
where each $f_{\theta, v}$ is supported in an interval of length $O(R^{-\frac{1}{2}})$, and its Fourier transform is essentially supported in an interval of length $O(R^{\frac{1}{2}})$.
In $\R^2$ each $Ef_{\theta, v}$ is essentially supported in a $R^{\frac{1}{2}} \times R$ tube inside $B_R$.
We denote
\begin{equation}
S = \frac{\max_{\theta, v} \|f_{\theta, v}\|_{L^2}   }{\|f\|_{L^2}} ,
\end{equation}
which measures the size of a single wave packet compared with the total size.
Then key results of this paper, which are special cases for Theorem \ref{thm:main}, are as following. 
\begin{thm}[Key Points]  \label{thm:key}
For arbitrarily small $\epsilon > 0$, there is
\begin{equation}  \label{eq:key4}
\| Ef \|_{L^4(B_R)} \lesssim_{\epsilon} R^{\frac{1}{8} + \epsilon} S^{\frac{1}{4} -\epsilon} \| f \|_{L^2}  ,
\end{equation}
\begin{equation}  \label{eq:key5}
\| Ef \|_{L^5(B_R)} \lesssim_{\epsilon} R^{\frac{1}{20} + \epsilon} S^{\frac{1}{5} -\epsilon} \| f \|_{L^2}  ,
\end{equation}
\begin{equation}  \label{eq:key6}
\| Ef \|_{L^6(B_R)} \lesssim_{\epsilon} R^{\frac{1}{6} + \epsilon} S^{\frac{2}{3} -\epsilon} \| f \|_{L^2}  .
\end{equation}
\end{thm}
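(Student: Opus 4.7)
The proof is by cases on $p$, with a different technique for each exponent, matching the three approaches announced in the abstract: $l^2$-decoupling for $p=6$, an arithmetic/bilinear argument for $p=4$, and polynomial partitioning for $p=5$. In all three cases we use the same wave-packet decomposition $f = \sum_{\theta,v} f_{\theta,v}$, with $\theta$ ranging over $R^{-1/2}$-caps in frequency and $v$ over spatial translations, so that each $Ef_{\theta,v}$ is essentially supported on a tube $T_{\theta,v}\subset B_R$ of dimensions $R^{1/2}\times R$, with $|Ef_{\theta,v}|\lesssim R^{-1/4}\|f_{\theta,v}\|_2$ on $T_{\theta,v}$, $L^2$-orthogonality $\sum\|f_{\theta,v}\|_2^2\approx\|f\|_2^2$, and the standing hypothesis $\max_{\theta,v}\|f_{\theta,v}\|_2\leq S\|f\|_2$.

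For \eqref{eq:key6}, the plan is to apply the Bourgain--Demeter $l^2$-decoupling theorem for the parabola, yielding $\|Ef\|_{L^6(B_R)}^2 \lesssim_{\epsilon} R^{\epsilon}\sum_\theta\|Ef_\theta\|_{L^6}^2$. Within each cap the $\sim R^{1/2}$ parallel tubes are essentially disjoint, so $\|Ef_\theta\|_6^6\lesssim\sum_v\|Ef_{\theta,v}\|_6^6\lesssim\sum_v\|f_{\theta,v}\|_2^6$. Bounding four of the six factors by $(S\|f\|_2)^4$ leaves $\sum_v\|f_{\theta,v}\|_2^2=\|f_\theta\|_2^2$, and a final H\"older inequality over the $\sim R^{1/2}$ caps (using $\sum_\theta\|f_\theta\|_2^2=\|f\|_2^2$) produces exactly the claimed $R^{1/6+\epsilon}S^{2/3}\|f\|_2$.

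For \eqref{eq:key4}, the plan is to write $\|Ef\|_4^4 = \|(Ef)^2\|_2^2$ and exploit the arithmetic of the parabola: the Fourier supports of the products $Ef_{\theta_1}Ef_{\theta_2}$ are essentially disjoint across different unordered pairs $\{\theta_1,\theta_2\}$, since on the parabola the quantities $\omega_1+\omega_2$ and $\omega_1^2+\omega_2^2$ determine $\{\omega_1,\omega_2\}$. This reduces the estimate to $\sum_{\theta_1,\theta_2}\|Ef_{\theta_1}Ef_{\theta_2}\|_2^2$, which we bound via the classical bilinear $L^2$ estimate $\|Ef_{\theta_1}Ef_{\theta_2}\|_2^2\lesssim \alpha^{-1}\|f_{\theta_1}\|_2^2\|f_{\theta_2}\|_2^2$, where $\alpha$ is the angular separation of the two caps. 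The $S$-gain is then extracted via a Whitney-type dyadic decomposition over $\alpha$, combined with a pigeonholing onto wave packets of comparable $L^2$-norm and the cap-level inequality $\|f_\theta\|_2^2\leq R^{1/2}S^2\|f\|_2^2$, arranged so that no $S$ is wasted in the sum.

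For \eqref{eq:key5}, note first that direct interpolation between \eqref{eq:key4} and \eqref{eq:key6} only yields $R^{3/20+\epsilon}S^{1/2}\|f\|_2$, strictly worse in the $R$-exponent than the target, so $p=5$ requires a genuinely new argument. The plan is to apply polynomial partitioning in the style of Guth: choose a polynomial $P$ of degree $D\approx R^\delta$ whose zero set cuts $B_R$ into $\sim D^2$ cells of diameter $R/D$, sort wave packets into those lying inside a single cell (\emph{cellular}) and those concentrated near $Z(P)$ (\emph{tangential} or \emph{transverse}), recurse on the cellular pieces at the smaller scale $R/D$ (the hypothesis on $S$ only improves under restriction), and bound the tangential contribution using Wongkew's theorem on tubes tangent to an algebraic variety together with $L^2$-orthogonality. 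The main obstacle is precisely the tangential case: many wave packets can cluster along one algebraic curve, and preserving the $S^{1/5}$-gain across this step will require either a broad/narrow alternative or an auxiliary induction on wave-packet concentration so that the outer induction on $R$ closes at the critical exponent.
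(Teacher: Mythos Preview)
Your plan for $p=6$ matches the paper's argument exactly, and your $p=5$ outline via polynomial partitioning is structurally the same as the paper's. The tangential case there is actually simpler than you anticipate: rather than a broad/narrow alternative or an auxiliary induction, the paper shows by an elementary incidence argument (the zero set meets any circle in $\lesssim D$ points) that at each point of the wall $\mathcal{N}$ only $O(D^2 G R^{O(\delta)})$ tangent tubes can pass, and combined with Wongkew's volume bound this yields directly $\|g_{\mathrm{tang}}\|_{L^5(\mathcal{N})}^5 \lesssim (D^2G)^4 R^{1/4+O(\delta)} M\|f\|_2^4$, which already carries the required factor of $M=S\|f\|_2$.

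The genuine gap is in your $p=4$ plan. The orthogonality step reducing to $\sum_{\theta_1,\theta_2}\|Ef_{\theta_1}Ef_{\theta_2}\|_{L^2(B_R)}^2$ is correct and matches the paper. But the cap-level bilinear estimate $\|Ef_{\theta_1}Ef_{\theta_2}\|_2^2\lesssim\alpha^{-1}\|f_{\theta_1}\|_2^2\|f_{\theta_2}\|_2^2$, summed Whitney-style over $\alpha$ and combined with $\|f_\theta\|_2^2\le R^{1/2}S^2\|f\|_2^2$, cannot recover the target $R^{1/2+\epsilon}S\|f\|_2^4$. To see the failure, pigeonhole to $N$ wave packets each of size $M$ and put all of them (with $N\le R^{1/2}$) in a single direction $\theta_0$: your diagonal term gives $R^{1/2}\|f_{\theta_0}\|_2^4 = R^{1/2}N^2M^4$, while the target is $R^{1/2}M\|f\|_2^3 = R^{1/2}N^{3/2}M^4$. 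The cap-level inputs simply do not see the $v$-variable, so no amount of rearranging recovers the missing $N^{-1/2}=S$. The paper therefore does \emph{not} pass to the cap-level bilinear estimate after orthogonality; it stays at the wave-packet level, expands over tube intersections $RT_{\theta_1,u_1}\cap RT_{\theta_2,u_2}$, then \emph{squares} the resulting sum and uses AM--GM so that one of the four coefficient factors becomes $|c_{\theta_4,v_4}|^4\le M^2|c_{\theta_4,v_4}|^2$. The decisive geometric input is the single-tube estimate
\[
\sum_{\theta_2,u_2}\Bigl(\Vol\bigl(RT_{\theta_1,u_1}\cap RT_{\theta_2,u_2}\cap B_R\bigr)\Bigr)^2 \lesssim R^{3}\log R,
\]
which encodes the full $v$-dependence that your cap-level scheme throws away.
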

Compare with (\ref{eq:trf}): for $p = 4, 5$, we have an extra power of $S$ in the right hand side of (\ref{eq:key4}) and (\ref{eq:key5}), respectively;
for $p = 6$, (\ref{eq:key6}) is a refinement only when $S \lesssim R^{-\frac{1}{4}}$.

Note that essentially there is $R^{-\frac{1}{2}} \lesssim S \lesssim 1$.
Using H\"older's inequality, we can obtain refinements of (\ref{eq:trf}) for all $2 < p \leq 6$.
We also give examples, which imply that our results for $2 < p \leq 4$ cannot be further improved, and for $5 \leq p \leq 6$ they can not be improved either if we assume that $S \lesssim R^{-\frac{1}{4}}$.

The motivation of these refinements is the observation that (\ref{eq:trf}) is not sharp when $f$ is spread out in either physical or frequency space.
To be more precise, let's look at two examples.
First consider 
$f_0: \R \rightarrow \C$, which is a smooth bump function supported in $[-2, 2]$, and equals $1$ in $[-1, 1]$.
Through a typical argument of stationary phase, we can bound $|Ef_0(x, t)|$ by $|t|^{-\frac{1}{2}}$ when $|t| \gtrsim |x|$, and $|x|^{-1}$ when $|x| \lesssim |t|$.
Then there is
\begin{equation}
\| Ef_0 \|_{L^p(B_R)} \lesssim 
\begin{cases}
R^{\frac{2}{p} - \frac{1}{2}} \sim R^{\frac{2}{p} - \frac{1}{2}} \| f_0 \|_{L^2}, & 2 \leq p \leq 4 \\
1 \sim \| f_0 \|_{L^2}, & p > 4 ,
\end{cases}
\end{equation}
which is strictly better than (\ref{eq:trf}) for $2 < p < 6$.
We also have a sharp example for (\ref{eq:trf}): let 
$f_1$ be a smooth bump function supported in $\left[ - 2R^{-\frac{1}{2}}, 2R^{-\frac{1}{2}} \right]$, and equals $1$ in $\left[ - R^{-\frac{1}{2}}, R^{-\frac{1}{2}} \right]$.
When $|x| < R^{\frac{1}{2}}$ and $|t| < R$ there is $|Ef_1(x, t)| \sim 1$.
This implies that
\begin{equation}
\| Ef_1 \|_{L^p(B_R)} \gtrsim 
R^{\frac{3}{2p} - \frac{1}{2}} \sim R^{\frac{3}{2p} - \frac{1}{4}} \| f_1 \|_{L^2},\quad 2 \leq p \leq 6 ,
\end{equation}
which saturates the estimate (\ref{eq:trf}) for all $2 \leq p \leq 6$.

When comparing these two examples, we note that in a wave packet decomposition (of size $R$), $f_0$ has $\sim R^{\frac{1}{2}}$ nonzero wave packets of (almost) equal size, while $f_1$ has $\sim 1$ nonzero wave packet.
Then for $f_0$, there is $S \sim R^{-\frac{1}{4}}$; and for $f_1$, there is $S \sim 1$.
This motivates us to refine (\ref{eq:trf}) when $S$ is small, by inserting a power of $S$ to its right hand side.




For the proofs, we use different approaches for different cases of this problem.
For $p=4$, which is an even integer, we use arithmetic approaches to fully exploit the structure of the wave packet decomposition.
For $p=5$ we adapt polynomial partitioning, a method from incidental geometry and introduced by Larry Guth to the restriction estimates \cite{guth2016restriction} \cite{1603.04250}.
For $p=6$, we apply the recent proved $l^2$ Decoupling Theorem \cite{MR3374964}.
Let us point out that the methods of polynomial partitioning and $l^2$ Decoupling Theorem capture different aspects in the refinements:
the polynomial method deals with the intersecting of ``tubes'' in $B_R$, while $l^2$ Decoupling Theorem deals with cancellations in frequencies.

At the end we make a conjecture about the sharp estimates, for the remaining cases.
We expect that a combination of the approaches used in this text will eventually settle this problem.

\bigskip

\noindent\emph{Acknowledgment}
This work was conducted at the 2016 Summer Program in Undergraduate Research (SPUR) of the MIT Department of Mathematics, where the first author was the graduate mentor.
We would like to thank Prof. Larry Guth for proposing this problem,
and Prof. David Jerison and Prof. Ankur Moitra for  many  useful  discussions
and for suggesting the direction of this work.


\bigskip
\section{Problem Setup}  \label{sec:bg:wpd}

\subsection{Notations and Standard Results}

In this section we introduce notations and some some basic results under this context. 
These results will also be widely referred to in the proofs in subsequent sections.

There is a standard result from the method of stationary phase, and we state it here for future reference.
The proof of this can be easily found in most harmonic analysis textbooks
(see, e.g. \cite[Chapter VIII]{mit.00066085419930101} \cite[Chapter 6]{book:430974}).

\begin{thm}[Principle of non-stationary phase] \label{thm:sp}
Let both $\phi : \R^n \rightarrow \R$ and
$\varphi : \R^n \rightarrow \C$ be smooth,
and $|\nabla \phi|$ is bounded away from zero in the support of $\varphi$.
Then for any positive integer $N$,
we have
\begin{equation}
\left| \int_{\R^n} e^{\im \lambda \phi(x)} \varphi(x) \diff x  \right| \lesssim_{N} \lambda^{-N}  .
\end{equation}
\end{thm}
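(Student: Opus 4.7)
The plan is to execute the classical integration-by-parts argument that exploits the oscillation of $e^{\im\lambda\phi}$. The key observation is that the identity $\frac{1}{\im\lambda}\nabla\phi\cdot\nabla\bigl(e^{\im\lambda\phi}\bigr)=|\nabla\phi|^2 e^{\im\lambda\phi}$ lets us trade one power of the (large) parameter $\lambda$ for one derivative on the amplitude, so iterating $N$ times will yield the $\lambda^{-N}$ decay. Implicit in the hypothesis is that $\varphi$ has compact support (otherwise the integral need not converge), and I will assume this throughout.

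First I would introduce the first-order differential operator
\begin{equation*}
L f \;=\; \frac{1}{\im\lambda\,|\nabla\phi|^2}\,\nabla\phi\cdot\nabla f,
\end{equation*}
which is well-defined on a neighborhood of $\mathrm{supp}\,\varphi$ thanks to the assumption that $|\nabla\phi|$ is bounded away from zero there. By construction $L\bigl(e^{\im\lambda\phi}\bigr)=e^{\im\lambda\phi}$. Its formal transpose, obtained by integration by parts once, is
\begin{equation*}
L^{t}\varphi \;=\; -\,\frac{1}{\im\lambda}\,\nabla\cdot\!\left(\frac{\varphi\,\nabla\phi}{|\nabla\phi|^2}\right).
\end{equation*}
Then I would iterate: for each positive integer $N$,
\begin{equation*}
\int_{\R^n} e^{\im\lambda\phi(x)}\varphi(x)\diff x
\;=\; \int_{\R^n} L^{N}\bigl(e^{\im\lambda\phi(x)}\bigr)\,\varphi(x)\diff x
\;=\; \int_{\R^n} e^{\im\lambda\phi(x)}\,(L^{t})^{N}\varphi(x)\diff x,
\end{equation*}
with no boundary terms because $\varphi$ (and hence every $(L^t)^k\varphi$) is compactly supported in the region where $|\nabla\phi|$ is bounded below.

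Next I would control $(L^t)^N\varphi$. A straightforward induction on $N$ shows that $(L^t)^N\varphi$ is a smooth function supported in $\mathrm{supp}\,\varphi$ of the form $\lambda^{-N}$ times a finite sum of products of derivatives of $\varphi$ and of rational expressions in the partial derivatives of $\phi$ with $|\nabla\phi|^{2}$ in the denominator. Since $\phi$ and $\varphi$ are smooth and $|\nabla\phi|\geq c>0$ on $\mathrm{supp}\,\varphi$, each such expression is uniformly bounded by a constant $C_N$ depending only on $N$, $\phi$, and $\varphi$. Taking absolute values inside the integral then gives
\begin{equation*}
\left|\int_{\R^n} e^{\im\lambda\phi}\varphi\,\diff x\right|
\;\leq\; \int_{\R^n}\bigl|(L^t)^N\varphi(x)\bigr|\diff x
\;\leq\; C_N\,\mathrm{Vol}(\mathrm{supp}\,\varphi)\,\lambda^{-N},
\end{equation*}
which is the claimed bound.

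The only real obstacle is the bookkeeping of the iterated operator $(L^t)^N$: one must check inductively that each application produces exactly one additional factor of $\lambda^{-1}$ without introducing any singularities, and that no boundary contributions appear when integrating by parts. Both points are routine given the lower bound on $|\nabla\phi|$ and the compact support of $\varphi$, so the heart of the argument really is the one-line identity $L(e^{\im\lambda\phi})=e^{\im\lambda\phi}$.
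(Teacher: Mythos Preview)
Your argument is correct and is precisely the standard integration-by-parts proof found in the textbook references the paper cites; the paper itself does not supply a proof of this theorem but simply defers to \cite{mit.00066085419930101} and \cite{book:430974}. Your explicit noting of the implicit compact-support hypothesis on $\varphi$ is appropriate, since without it the stated bound need not hold.
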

In other words,
we say that the integral fast decays as $\lambda$ grows.

We formally give the definition of the extension operator $E$.
By rescaling it suffices to consider the case where $f$ is supported in $[-1, 1]$.
\begin{definition}
For any smooth function $f:[-1, 1]\rightarrow \C$, let
\begin{equation}
Ef(x, t) := \int_{[-1, 1]} e^{\im (\omega^2 t + \omega x)} f(\omega) \diff \omega  .
\end{equation}
\end{definition}

We also identify $E$ with the inverse Fourier transform in $\R^2$, in the following sense:
\begin{definition}
Denote $P$ to be the \emph{truncated parabola} in $\R^2$: $P:= \{ (\omega, \omega^2) : \omega \in [-1, 1] \}$.
Let $\diff \sigma$ be the distribution supported on $P$, such that its projection to the first coordinate is the uniform measure on $[-1, 1]$.
For each function $f : [-1, 1] \rightarrow \C$, denote $f^*: P \rightarrow \C$ to be the function with
$f^*(\omega, \omega^2) = f(\omega)$.
By direct computation, the $\R^2$ Fourier transform of $Ef$ is $\widehat{Ef} = f^* \diff \sigma$.
\end{definition}

We will use a function to ``cut out'' the $L^p$ norm inside $B_R$.
\begin{definition}
Let $\eta : \R^2 \rightarrow \C$ be a Schwartz function, such that 
$\widehat{\eta}$ is supported in $B_1$, nonnegative, and is constant $1$ inside $B_{0.99}$.
By direct computation, $\eta \sim 1$ inside $B_1$;
and by Theorem \ref{thm:sp},
\begin{equation}
|\eta(x, t)| \lesssim_{N} \left( 1 + |x| + |t| \right)^{-N}  ,
\end{equation}
for any positive integer $N$.

Let $\eta_R : \R^2 \rightarrow \C$ such that $\eta_R(x, t) = \eta(x / R, t/R)$.
Then $\eta_R \sim 1$ inside $B_R$ and
\begin{equation}   \label{eq:sf:bd}
|\eta_R(x, t)| \lesssim_{N} \left( 1 + (|x| + |t|)R^{-1} \right)^{-N}  ,
\end{equation}
for any positive integer $N$.
\end{definition}

\begin{definition}
For any measurable set $A \subset \R^2$, use $\Vol(A)$ to denote the Lebesgue measure (\emph{area}) of $A$.
\end{definition}

\subsection{Wave Packet Decomposition}

Now we state the wave packet decomposition that will be used in this text.
The version we adapt here is from \cite{tao2003sharp}, and
the sketch proof given below also follows the proof there.
We point out that there are some other versions of the wave packet decomposition in literature, and interested readers can find some discussions in \cite{bourgain42some} and \cite{tao1998bilinear}.

First let us introduce the notation of a \emph{tube}.
\begin{definition}
For any $R > 0$, $\theta \in [-1, 1]$
and $v \in\R$,
denote $RT_{\theta, v}$ to be the \emph{tube}:
\begin{equation}
RT_{\theta, v} = \left\{ (t, x) \left|  |x - v - \theta t| \leq R^{\frac{1}{2}}   \right. \right\}  .
\end{equation}
\end{definition}

For any smooth function $f:[-1, 1]\rightarrow \C$,
we can decompose it into wave packets,
each supported in an interval of length $\sim R^{-\frac{1}{2}}$,
and the corresponding $Ef$
is decomposed 
such that inside $B_R$, 
each wave packet is essentially supported in a tube.

More precisely, we have the following \emph{wave-packet decomposition of size $R$}.
\begin{prop}\protect{\cite[Lemma 4.1]{tao2003sharp}}   \label{prop:wpd}
For any $R > 0$ and smooth $f : [-1, 1] \rightarrow \C$,
there exists a decomposition 
\begin{equation} 
f = \sum_{\substack{\theta \in R^{-\frac{1}{2}}\Z \bigcap [-1, 1], \\ v \in R^{\frac{1}{2}}\Z }} f_{\theta, v}  = \sum_{\substack{\theta \in R^{-\frac{1}{2}}\Z \bigcap [-1, 1], \\ v \in R^{\frac{1}{2}}\Z }} c_{\theta, v} \phi_{\theta, v},
\end{equation}
where each $f_{\theta, v}$ (and $\phi_{\theta, v}$) is supported in $[\theta - 3R^{-\frac{1}{2}}, \theta + 3R^{-\frac{1}{2}}]$.
Each coefficient $c_{\theta, v} \in \C$ satisfies that
\begin{equation}  \label{eq:wpd:l2}
\sum_{\theta, v} |c_{\theta, v}|^2 \sim \|f\|_{L^2}^2  ,
\end{equation}
and for each $\theta_o \in R^{-\frac{1}{2}}\Z \bigcap [-1, 1], v_o\in R^{\frac{1}{2}}\Z$,
\begin{equation}  \label{eq:wpd:sl2}
| c_{\theta_o, v_o} | \lesssim \max_{\theta, v} \| f_{\theta, v} \|_{L^2} .
\end{equation}
Each $\phi_{\theta, v}$ also satisfies
\begin{equation}  \label{eq:wpd:bd}
|E\phi_{\theta, v}(x, t) | \lesssim_N R^{-\frac{1}{4}} \left( 1 + |x - v - \theta t|R^{-\frac{1}{2}} \right)^{-N}
\end{equation}
for any positive integer $N$.
\end{prop}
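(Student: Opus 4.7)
The plan is to use the standard wave packet construction: a smooth partition of unity in frequency combined with a Fourier series expansion in the conjugate spatial variable. The two target windows have sizes $R^{-1/2}$ in frequency and $R^{1/2}$ in position, placing us at the critical uncertainty-product scale where a nearly tight frame exists on the lattice $R^{-1/2}\Z \times R^{1/2}\Z$.

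Concretely, I would fix a Schwartz bump $\chi : \R \to \R$ supported in $[-3,3]$ with $\sum_{k\in\Z}\chi(\omega - k)\equiv 1$, set $\chi_\theta(\omega) = \chi(R^{1/2}(\omega - \theta))$ for $\theta\in R^{-1/2}\Z\cap[-1,1]$, and write $f = \sum_\theta g_\theta$ with $g_\theta := \chi_\theta f$, which is supported in $[\theta - 3R^{-1/2}, \theta + 3R^{-1/2}]$ as required. For each fixed $\theta$, $g_\theta$ lives on an interval of length $6R^{-1/2}$, so expanding it in a Fourier series on that interval (with modes $\me^{-\im v\omega}$ indexed by $v$ on a lattice of spacing proportional to $R^{1/2}$, rescaled to $R^{1/2}\Z$ by absorbing an $O(1)$ constant into $\chi$) naturally suggests the definition
\begin{equation*}
\phi_{\theta,v}(\omega) = R^{1/4}\chi(R^{1/2}(\omega - \theta))\me^{-\im v\omega} ,
\end{equation*}
so normalized that $\|\phi_{\theta,v}\|_{L^2}\sim 1$ uniformly, with $c_{\theta,v}$ the corresponding Fourier coefficient. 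Parseval on each frequency slab, together with $\sum_\theta \chi_\theta\equiv 1$ on $[-1,1]$ and an almost-orthogonality argument for the mild overlap between neighboring bumps, then yields $\sum_{\theta,v}|c_{\theta,v}|^2\sim\sum_\theta\|g_\theta\|_{L^2}^2\sim\|f\|_{L^2}^2$, proving (\ref{eq:wpd:l2}). Setting $f_{\theta,v} = c_{\theta,v}\phi_{\theta,v}$, the identity $\|f_{\theta,v}\|_{L^2} = |c_{\theta,v}|\cdot\|\phi_{\theta,v}\|_{L^2}\sim|c_{\theta,v}|$ immediately gives (\ref{eq:wpd:sl2}).

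For the tube concentration (\ref{eq:wpd:bd}) I would apply non-stationary phase. Substituting $\omega = \theta + R^{-1/2}u$ in
\begin{equation*}
E\phi_{\theta,v}(x,t) = R^{1/4}\int \me^{\im(\omega^2 t + \omega x)}\chi(R^{1/2}(\omega-\theta))\me^{-\im v\omega}\diff\omega
\end{equation*}
extracts a prefactor $R^{-1/4}$, exactly matching the claim, and leaves a $u$-phase whose derivative on the support $|u|\leq 3$ has magnitude proportional to $R^{-1/2}|x - v - \theta t|$ (up to a convention-dependent rescaling absorbed into the constants) plus an $O(R^{-1}|t|)$ correction that is bounded on $B_R$. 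Whenever $|x - v - \theta t|\geq MR^{1/2}$ with $M\geq 1$, the derivative has size $\gtrsim M$, so Theorem \ref{thm:sp} yields $O_N(M^{-N})$ decay, which combines with the $R^{-1/4}$ prefactor to give (\ref{eq:wpd:bd}).

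The main obstacle I anticipate is the bookkeeping in the second step: a smooth partition of unity is not an orthogonal decomposition, so upgrading (\ref{eq:wpd:l2}) to a two-sided equivalence requires a genuine almost-orthogonality estimate for the cross-terms $\langle g_\theta, g_{\theta'}\rangle$ between neighboring frequency bumps and for the non-orthogonality induced by truncating a Fourier series to a single interval. Once this combinatorial bookkeeping is pinned down, the remaining ingredients (non-stationary phase for (\ref{eq:wpd:bd}), the normalization verification for (\ref{eq:wpd:sl2})) are routine.
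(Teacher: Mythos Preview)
Your construction is correct and is in fact the more elementary of the two standard routes, but it is \emph{not} the one the paper follows. The paper reproduces Tao's construction from \cite{tao2003sharp}: after the frequency localization $f=\sum_\theta f_\theta$, it does not expand $f_\theta$ in a Fourier series but instead multiplies $\widehat{f_\theta}=Ef_\theta(\cdot,0)$ by a smooth physical-side partition of unity $\sum_v\gamma(R^{-1/2}(\cdot-v))\equiv 1$ (with $\widehat\gamma$ compactly supported) and defines $f_{\theta,v}$ by inverse Fourier transform. Crucially, the coefficients there are taken to be $c_{\theta,v}:=R^{1/4}\,MEf_\theta(v,0)$, the Hardy--Littlewood maximal function of $Ef_\theta(\cdot,0)$ sampled on the lattice. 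So in the paper $\phi_{\theta,v}=f_{\theta,v}/c_{\theta,v}$ depends on $f$, whereas in your scheme $\phi_{\theta,v}$ is a fixed Gabor atom.

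This difference has real consequences for the difficulty of the three estimates. In your setup $\|f_{\theta,v}\|_{L^2}\sim|c_{\theta,v}|$ is immediate from the uniform normalization of $\phi_{\theta,v}$, so (\ref{eq:wpd:sl2}) is a triviality; in the paper it is the hardest of the three, requiring a case split on whether the maximal function at $v_o$ is realized by an interval of length $\leq R^{1/2}$ or $>R^{1/2}$, plus the $L^2$-boundedness of $M$. Conversely, your approach pushes the work into (\ref{eq:wpd:l2}), where (as you note) one must control the overlaps of the $\chi_\theta$ and reconcile the Fourier-series interval with the support of $\chi$; in the paper (\ref{eq:wpd:l2}) comes almost for free from the maximal inequality. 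The decay bound (\ref{eq:wpd:bd}) is proved the same way---non-stationary phase---in both. For the purposes of this paper your Gabor-frame construction is entirely adequate and arguably cleaner; Tao's maximal-function variant is carried over because it is the version quoted, and because its adaptivity (the $\phi_{\theta,v}$ shaped by $f$) is useful in other bilinear settings.
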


\begin{sproof}
First write $f = \sum_{\theta \in R^{-\frac{1}{2}}\Z \bigcap [-1, 1] } f_{\theta} $,
where each $f_{\theta}$ is supported in the $R^{-\frac{1}{2}}$-neighborhood of $\theta$.
Since the support for each $f_{\theta}$ overlaps with at most $2$ others, there is
\begin{equation}
\sum_{\theta} \| f_{\theta} \|_{L^2}^2 \sim \| f \|_{L^2}^2  .
\end{equation}

By the Poisson summation formula we find a Schwartz function $\gamma: \R \rightarrow \C$,
such that $\widehat{\gamma}$ is supported in $[-1, 1]$,
and $\sum_{k\in \Z} \gamma(x - k) = 1$ for any $x \in \R$.
Now denote
\begin{equation}
f_{\theta, v} = \left( \widehat{f}_{\theta} \cdot \gamma\left( R^{-\frac{1}{2}} (x - v) \right) \right)^{\vee }  ,
\end{equation}
then each $f_{\theta, v} = f_{\theta} \ast \gamma\left( R^{-\frac{1}{2}} (x - v) \right)^{\vee } $ is supported in the $3R^{-\frac{1}{2}}$-neighborhood of $\theta$.
Take
\begin{equation}
c_{\theta,v} : = R^{\frac{1}{4}} MEf_{\theta}(v, 0) ,
\end{equation}
where
\begin{equation}
MEf_{\theta}(x, 0) : = \sup_{r > 0} \frac{1}{2r} \int_{[x-r, x+r]} |Ef_{\theta}(x', 0)| \diff x'
\end{equation}
is the Hardy-Littlewood maximal function.

Take Schwartz function $\varphi : \R \rightarrow \C$,
such that $\widehat{\varphi} = 1$ inside the $3R^{-\frac{1}{2}}$-neighborhood of $\theta$
and vanishes outside the $4R^{-\frac{1}{2}}$-neighborhood of $\theta$.
And $\varphi$ also satisfies
\begin{equation}
|\varphi(x)| \lesssim_{N} R^{-\frac{1}{2}}\left(1 + |x|R^{-\frac{1}{2}} \right)^{-N}
\end{equation}
for any positive integer $N$.

Since $f_{\theta}$ is supported in the $3R^{-\frac{1}{2}}$-neighborhood of $\theta$,
we have that
$Ef_{\theta}(\cdot, 0) = Ef_{\theta}(\cdot, 0) \ast \varphi $.
This implies that
\begin{equation}
MEf_{\theta}(x_1, 0) \sim MEf_{\theta}(x_2, 0)  ,
\end{equation}
for any $|x_1 - x_2| \leq R^{\frac{1}{2}}$.
Then we conclude that
\begin{equation}  \label{eq:wpd:pf2}
\sum_{v} |c_{\theta, v}|^2 \sim \sum_{v} \int_{[v - R^{\frac{1}{2}}, v + R^{\frac{1}{2}}]} |MEf_{\theta}(x, 0)|^2 \diff x 
\lesssim \int |Ef_{\theta}(x, 0)|^2 \diff x \leq  \| f_{\theta} \|_{L^2}^2  .
\end{equation}

Let's consider (\ref{eq:wpd:sl2}).
For any $\theta_o$ and $v_o$,
if for each $x \in \left[v_o - R^{\frac{1}{2}}, v_o + R^{\frac{1}{2}}\right]$,
there is a neighborhood $\mathcal{A}_x$ of $x$,
such that $|\mathcal{A}_x| \leq R^{\frac{1}{2}}$, and
\begin{equation}  \label{eq:wpd:pf3}
MEf_{\theta_o}(x, 0) = \frac{1}{|\mathcal{A}_x|} \int_{\mathcal{A}_x} |Ef_{\theta_o}(x', 0)|\diff x' ,
\end{equation}
then $MEf_{\theta_o} = M\left( \chi_{o} Ef_{\theta_o}  \right)$ inside
$\left[v_o - R^{\frac{1}{2}}, v_o + R^{\frac{1}{2}}\right]$,
where $\chi_o$ is the indicator function of the interval
$\left[v_o - 2R^{\frac{1}{2}}, v_o + 2R^{\frac{1}{2}}\right]$,
and
\begin{multline}
|c_{\theta_o, v_o}|^2 \sim
\int_{\left[v_o - R^{\frac{1}{2}}, v_o + R^{\frac{1}{2}}\right]} |M\left( \chi_{o} Ef_{\theta_o} \right)|^2 \diff x 
\lesssim \int_{\left[v_o - 2R^{\frac{1}{2}}, v_o + 2R^{\frac{1}{2}}\right]} \left|Ef_{\theta_o}(x, 0)\right|^2 \diff x 
\\
\lesssim \int \sum_{k=-2}^2 \left|Ef_{\theta_o, v_o + kR^{\frac{1}{2}}}(x, 0)\right|^2 \diff x 
\lesssim \max_{\theta, v} \| f_{\theta, v} \|_{L^2}^2 .
\end{multline}
Otherwise, if (\ref{eq:wpd:sl2}) does not hold for each $x \in \left[v_o - R^{\frac{1}{2}}, v_o + R^{\frac{1}{2}}\right]$, meaning that there is $x_o \in [v_o - R^{\frac{1}{2}}, v_o + R^{\frac{1}{2}}]$,
and
a neighborhood $\mathcal{A}_o$ of $x_o$,
such that $|\mathcal{A}_o| > R^{\frac{1}{2}}$, and
\begin{equation}
MEf_{\theta_o}(x_o, 0) = \frac{1}{|\mathcal{A}_o|} \int_{\mathcal{A}_o} |Ef_{\theta_o}(x', 0)|\diff x' ,
\end{equation}
then there is a $v'$, covered by $\mathcal{A}_o$,
such that
\begin{equation}
|c_{\theta_o, v_o}|^2 \lesssim \int_{\left[v' - R^{\frac{1}{2}}, v' + R^{\frac{1}{2}}\right]} |Ef_{\theta_o}(x', 0)|^2 \diff x' \lesssim \| f_{\theta_o, v'} \|_{L^2}^2 \lesssim \max_{\theta, v} \| f_{\theta, v} \|_{L^2}^2 .
\end{equation}

Now we show (\ref{eq:wpd:bd}).
It suffices to prove the following estimate:
\begin{equation}
\left| Ef_{\theta, v}(x, t) \right| \lesssim_N R^{-\frac{1}{4}} \left( 1 + |x - v - \theta t|R^{-\frac{1}{2}} \right)^{-N} MEf_{\theta}(v, 0)
\end{equation}
for any positive integer $N$.
This follows direct stationary phase computation, and details can be found in the proof of
\cite[Lemma 4.1]{tao2003sharp}.

From (\ref{eq:wpd:bd}), direct computation implies that
\begin{equation}
\int |Ef_{\theta, v}(x, 0)|^2 \diff x \lesssim c_{\theta,v}^2,
\quad
\int |Ef_{\theta}(x, 0)|^2 \diff x \lesssim \sum_{ v} c_{\theta,v}^2,
\end{equation}
which, with (\ref{eq:wpd:pf2})
implies (\ref{eq:wpd:l2}).
\end{sproof}

The bound (\ref{eq:wpd:bd}) has the following implication.
\begin{cor}  \label{cor:wpd:bd}
Let $f = \sum_{\theta, v} f_{\theta, v}$ be a wave packet decomposition of size $R$.
Denote $M = \max_{\theta, v}\| c_{\theta, v} \|$.
Then there is
\begin{equation}
\sum_{\theta, v: q \not\in R^{1 + \delta} T_{\theta, v}} |Ef_{\theta, v}(x, t)|
\lesssim_{\delta, N, \lambda}
R^{-N} \| f \|_{L^2}^{\lambda} M^{1 - \lambda}
\end{equation}
for any point $(x, t) \in \R^2$, and $\lambda \in (0 ,1]$.
\end{cor}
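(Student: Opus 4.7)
The plan is to use the pointwise bound (\ref{eq:wpd:bd}) to reduce the claim to two elementary tasks: first, converting the tube-exclusion hypothesis into an arbitrarily large polynomial gain in $R$, and second, bounding the remaining weighted sum of the coefficients $c_{\theta,v}$ by interpolating between the $\ell^{\infty}$ bound $M$ and the $\ell^{2}$ bound $\|f\|_{L^{2}}$.

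Concretely, from the decomposition $f_{\theta,v} = c_{\theta,v}\phi_{\theta,v}$ and (\ref{eq:wpd:bd}) we have
\begin{equation*}
|Ef_{\theta,v}(x,t)| \lesssim_{N'} |c_{\theta,v}|\, R^{-\frac{1}{4}}\bigl(1 + |x-v-\theta t|R^{-\frac{1}{2}}\bigr)^{-N'}
\end{equation*}
for any positive integer $N'$. I would split the decay factor as $N' = N_{1} + N_{2}$, using the first part against the hypothesis $(x,t) \notin R^{1+\delta}T_{\theta,v}$ (equivalently $|x-v-\theta t|R^{-\frac{1}{2}} > R^{\delta/2}$) to pull out a factor of $R^{-N_{1}\delta/2}$, and keeping $(1+|x-v-\theta t|R^{-\frac{1}{2}})^{-N_{2}}$ for summation.

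For the remaining sum, the first step is the crude bound $|c_{\theta,v}| = |c_{\theta,v}|^{\lambda}|c_{\theta,v}|^{1-\lambda} \leq |c_{\theta,v}|^{\lambda} M^{1-\lambda}$, followed by H\"{o}lder's inequality on the indices $(\theta,v)$ with exponents $(2/\lambda,\, 2/(2-\lambda))$. Using (\ref{eq:wpd:l2}) to convert $(\sum|c_{\theta,v}|^{2})^{\lambda/2}$ into $\|f\|_{L^{2}}^{\lambda}$, the problem is reduced to estimating
\begin{equation*}
\Bigl(\sum_{\theta,v} R^{-\frac{\alpha}{4}}\bigl(1+|x-v-\theta t|R^{-\frac{1}{2}}\bigr)^{-N_{2}\alpha}\Bigr)^{1/\alpha}, \qquad \alpha = \frac{2}{2-\lambda}.
\end{equation*}
For each fixed $\theta$, summing over $v \in R^{\frac{1}{2}}\Z$ amounts to a geometric-type sum $\sum_{n\in\Z+c}(1+|n|)^{-N_{2}\alpha}$, which is $O(1)$ once $N_{2}\alpha > 1$; summing then over the $\sim R^{1/2}$ values of $\theta \in R^{-1/2}\Z\cap[-1,1]$ gives a polynomial factor of $R^{-\lambda/4} \leq 1$.

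Combining everything, the total bound is of the form $C_{N_{2},\lambda}\, R^{-N_{1}\delta/2}\|f\|_{L^{2}}^{\lambda}M^{1-\lambda}$, and choosing $N_{1}$ large enough (depending on $N$ and $\delta$) yields the desired estimate. I don't anticipate any serious obstacle; the only item needing slight care is ensuring that $N_{2}$ is chosen large enough (relative to $\lambda$) so that the per-$\theta$ geometric sum converges uniformly, which is harmless since $\lambda$ is fixed and $N'$ is free.
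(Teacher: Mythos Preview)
Your proposal is correct and follows essentially the same approach as the paper: invoke the pointwise decay (\ref{eq:wpd:bd}), convert the tube-exclusion into an $R^{\delta/2}$ lower bound on $|x-v-\theta t|R^{-1/2}$, interpolate the coefficients between $M$ and $\|f\|_{L^2}$ via H\"older, and sum the decaying weights. The paper organizes the H\"older step per-$\theta$ (then Cauchy--Schwarz over $\theta$) while you apply H\"older over all $(\theta,v)$ at once and split the decay exponent as $N_1+N_2$; these are equivalent bookkeeping choices. One arithmetic slip: after the $v$-sum and the $\theta$-sum the residual polynomial factor is $R^{(1-\lambda)/4}\geq 1$, not $R^{-\lambda/4}$, but since it is polynomial it is harmlessly absorbed into $R^{-N_1\delta/2}$ exactly as you indicate in your final sentence.
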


\begin{proof}
By (\ref{eq:wpd:bd}), 
for each $\theta \in \R^{-\frac{1}{2}}\Z \bigcap [-1, 1]$, there is
\begin{multline}
\sum_{v: (x, t) \not\in R^{1 + \delta} T_{\theta, v}} |Ef_{\theta, v}(x, t)|
\lesssim_L
\sum_{v: (x, t) \not\in R^{1 + \delta} T_{\theta, v}}
|c_{\theta, v}| R^{-\frac{1}{4}} \left( 1 + |x - v - \theta t|R^{-\frac{1}{2}} \right)^L
\\
\lesssim
M^{1 - \lambda}
\left( \sum_{v} |c_{\theta, v}|^2 \right)^{\frac{\lambda}{2}}
\left( 
\sum_{v: (x, t) \not\in R^{1 + \delta} T_{\theta, v}}
R^{-\frac{1}{4}\left( 1 - \frac{\lambda}{2} \right)^{-1}} \left( 1 + |x - v - \theta t|R^{-\frac{1}{2}} \right)^{L \left( 1 - \frac{\lambda}{2} \right)^{-1} }
\right)^{1 - \frac{\lambda}{2}}  .
\end{multline}
We have
\begin{equation}
\sum_{v: (x, t) \not\in R^{1 + \delta} T_{\theta, v}}
\left( 1 + |x - v - \theta t|R^{-\frac{1}{2}} \right)^{L \left( 1 - \frac{\lambda}{2} \right)^{-1}}
\lesssim
\sum_{n > R^{\frac{\delta}{2}}}
n^{-2L }
\sim
R^{\frac{\delta}{2}\cdot (1-2L ) } ,
\end{equation}
since $v \in R^{\frac{1}{2}} \Z$, and $|x - v - \theta t|R^{-\frac{1}{2}} \gtrsim R^{\frac{\delta}{2}}$.
Summing over $\theta$, by Cauchy--Schwarz inequality there is
\begin{multline}
\sum_{\theta, v: q \not\in R^{1 + \delta} T_{\theta, v}} |Ef_{\theta, v}(x, t)|
\lesssim_{L}
M^{1 - \lambda}
\sum_{\theta}\left( \sum_{v} |c_{\theta, v}|^2 \right)^{\frac{\lambda}{2}}
R^{\frac{\delta}{2}\cdot(1-2L)}
\\
\lesssim
R^{\frac{1}{2} - \frac{\lambda}{4} + \frac{\delta}{2}\cdot(1-2L)} \| f \|_{L^2}^{\lambda}M^{1 - \lambda} .
\end{multline}
Taking $L$ large enough finishes the proof.
\end{proof}

Let's consider wave packets in different scales.
There is a relation when transiting from the wave packet decomposition of one size to another.
\begin{lem}  \label{lemma:wpd:cs}
Let $0 < R_1 < R_2$, and $\left( \frac{R_2}{R_1} \right)^{\frac{1}{2}}$
an integer.
For smooth $f:[-1, 1] \rightarrow \C$, with wave packet decomposition of size $R_2$:
\begin{equation}
f = \sum_{\tau, w} c_{\tau, w} \phi_{\tau, w} = \sum_{\tau, w} f_{\tau, w} = \sum_{\tau} f_{\tau} ,
\end{equation}
then there exists a wave packet decomposition for size $R_1$:
\begin{equation}
f = \sum_{\theta, v} c_{\theta, v} \phi_{\theta, v} = \sum_{\theta, v} f_{\theta, v} = \sum_{\theta} f_{\theta}  ,
\end{equation}
such that
for each $\theta$, $v$, there is
\begin{equation}  \label{eq:wpd:cs:1}
|c_{\theta, v}| \lesssim \left( \frac{R_2}{R_1} \right)^{\frac{1}{4}} \max_{\tau, w}|c_{\tau, w}|  .
\end{equation}
\end{lem}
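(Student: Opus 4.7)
The plan is to construct the $R_1$-scale wave packet decomposition by applying Proposition \ref{prop:wpd} with an $R_1$-scale frequency partition of unity built from the $R_2$-scale bumps, and then to bound the resulting coefficients by exploiting the rapid decay (\ref{eq:wpd:bd}). The key observation is that an $R_1^{-1/2}$-scale frequency interval around any $\theta$ contains exactly $(R_2/R_1)^{1/2}$ of the $R_2^{-1/2}$-scale intervals. If I choose the $R_1$-scale partition bump as $\chi_\theta := \sum_{\tau \in \mathcal{T}_\theta} \psi_\tau$, where $\mathcal{T}_\theta$ is the corresponding block of $\tau$'s (of cardinality $\sim (R_2/R_1)^{1/2}$) and $\psi_\tau$ are the $R_2$-scale bumps used in the $R_2$ decomposition, then $f_\theta = \sum_{\tau \in \mathcal{T}_\theta} f_\tau$ exactly. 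Since a sum of $(R_2/R_1)^{1/2}$ consecutive $R_2$-scale bumps is itself a smooth bump on the $R_1^{-1/2}$-scale, the remainder of the construction in Proposition \ref{prop:wpd} (spatial localization by convolution, defining $c_{\theta, v} := R_1^{1/4} MEf_\theta(v, 0)$, verifying (\ref{eq:wpd:l2})--(\ref{eq:wpd:bd})) goes through and produces a valid wave packet decomposition at scale $R_1$.

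To establish (\ref{eq:wpd:cs:1}), I would derive a uniform pointwise bound on $Ef_\theta(\cdot, 0)$ from the $R_2$-scale expansion $f_\tau = \sum_w c_{\tau, w}\phi_{\tau, w}$ together with (\ref{eq:wpd:bd}):
\begin{equation*}
|Ef_\theta(x, 0)| \leq \sum_{\tau \in \mathcal{T}_\theta}\sum_w |c_{\tau, w}|\,|E\phi_{\tau, w}(x, 0)| \lesssim_N M\, R_2^{-\frac{1}{4}} \sum_{\tau \in \mathcal{T}_\theta}\sum_w \bigl(1+|x-w|R_2^{-\frac{1}{2}}\bigr)^{-N},
\end{equation*}
where $M := \max_{\tau, w}|c_{\tau, w}|$. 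A geometric-series computation shows that the inner sum over $w \in R_2^{\frac{1}{2}}\Z$ is uniformly $O(1)$ in $x$, and the outer sum contributes the factor $|\mathcal{T}_\theta| \sim (R_2/R_1)^{\frac{1}{2}}$, so $\|Ef_\theta(\cdot, 0)\|_{L^\infty} \lesssim M R_1^{-\frac{1}{2}}R_2^{\frac{1}{4}}$. Since the Hardy--Littlewood maximal function is dominated pointwise by the $L^\infty$ norm, this yields
\begin{equation*}
|c_{\theta, v}| = R_1^{\frac{1}{4}}MEf_\theta(v, 0) \lesssim R_1^{\frac{1}{4}} \cdot M R_1^{-\frac{1}{2}} R_2^{\frac{1}{4}} = (R_2/R_1)^{\frac{1}{4}}M,
\end{equation*}
which is exactly (\ref{eq:wpd:cs:1}).

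The main technical obstacle is essentially bookkeeping: verifying that summing $\sim (R_2/R_1)^{1/2}$ consecutive $R_2$-scale bumps really yields a Schwartz-type bump at scale $R_1^{-1/2}$ satisfying the (implicit) regularity hypotheses in the construction of Proposition \ref{prop:wpd}, so that the resulting $c_{\theta, v}$ and $\phi_{\theta, v}$ do form an honest wave packet decomposition of $f$ at scale $R_1$. This should be routine since every bump involved is Schwartz, and the same Schwartz decay in (\ref{eq:wpd:bd}) is what makes the inner $w$-sum converge uniformly to $O(1)$ and enables the pointwise $L^\infty$ bound driving the estimate above.
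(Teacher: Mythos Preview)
Your proposal is correct and follows essentially the same route as the paper: both group the $R_2$-scale frequency pieces into blocks $\mathcal{T}_\theta$ (the paper's $\mathcal{K}_\theta$) of cardinality $\sim (R_2/R_1)^{1/2}$, set $f_\theta=\sum_{\tau\in\mathcal{T}_\theta}f_\tau$, rerun the Proposition~\ref{prop:wpd} construction at scale $R_1$, and then bound $c_{\theta,v}=R_1^{1/4}MEf_\theta(v,0)$ via a pointwise bound on $Ef_\theta(\cdot,0)$ coming from (\ref{eq:wpd:bd}). The only cosmetic difference is that the paper passes through an $L^2$ average $\frac{R_1^{1/2}}{|\mathcal A|}\int_{\mathcal A}|Ef_\theta|^2$ before invoking the pointwise bound on each $|Ef_\tau|$, whereas you go straight to the $L^\infty$ bound; your version is arguably cleaner.
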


\begin{proof}
We can do the wave packet decomposition of size $R_1$ in the following way:
we can write $R_2^{-\frac{1}{2}} \Z \bigcap [-1, 1]$ as a disjoint union $\bigcup_{\theta \in R_1^{-\frac{1}{2}} \Z \bigcap [-1, 1]} \mathcal{K}_{\theta}$.
where each $\mathcal{K}_{\theta} \subset \left[ \theta - R_2^{-\frac{1}{2}}, \theta + R_2^{-\frac{1}{2}} \right]$.
Then we let
\begin{equation}
f_{\theta} = \sum_{\tau \in \mathcal{K}_{\theta}} f_{\tau} .
\end{equation}
And the the remaining of the decomposition follows the proof in Proposition \ref{prop:wpd}.

Note that $|\mathcal{K}_{\theta}| \lesssim (R_2/R_1)^{\frac{1}{2}}$.
For any $\theta, v$, from the decomposition, 
there is a neighborhood $\mathcal{A}$ of $v$, such that
\begin{equation}
|c_{\theta, v}|^2 \leq R_1^{\frac{1}{2}} \left( \frac{1}{|\mathcal{A}|} \int_{\mathcal{A}} |Ef_{\theta}|  \right)^2 
\leq \frac{R_1^{\frac{1}{2}}}{|\mathcal{A}|} \int_{\mathcal{A}} |Ef_{\theta}|^2 
\lesssim \sum_{\tau \in \mathcal{K}_{\theta}} \frac{R_2^{\frac{1}{2}}}{|\mathcal{A}|}  \int_{\mathcal{A}} |Ef_{\tau}|^2  .
\end{equation}
At each point $|Ef_{\tau}|^2 \sim \sum_{w} |c_{\tau, w}|^2$, thus is bounded by $R_2^{-\frac{1}{2}} \max_w |c_{\tau, w}|^2$.
Then
each $\frac{R_2^{\frac{1}{2}}}{|\mathcal{A}|}  \int_{\mathcal{A}} |Ef_{\tau}|^2$
is bounded by $\max_{w} |c_{\tau, w}|^2 $.
Summing over $\tau$ leads to (\ref{eq:wpd:cs:1}).
\end{proof}



\bigskip
\section{Main Results}  \label{sec:res}

\subsection{The Problem and Obtained Results}

For any wave packet decomposition (of size $R$) of smooth function $f$ ,
denote
\begin{equation}  \label{eq:mainieq:defs}
S = \frac{\max_{\theta, v} \|f_{\theta, v}\|_{L^2}   }{\|f\|_{L^2}} \sim
\frac{\max_{\theta, v} |c_{\theta, v}| }{\|f\|_{L^2}}  .
\end{equation}
We would like to find all $\alpha, \beta \geq 0$,
such that the following inequality always holds for any $\epsilon > 0$, sufficiently small (relying on $p, \alpha, \beta$):
\begin{equation}  \label{eq:mainieq}
\|Ef\|_{L^p(B_R)} \leq C(\alpha, \beta, p, \epsilon) R^{\alpha + \epsilon}S^{\beta - \epsilon} \|f\|_{L^2}  ,
\end{equation}
where $C(\alpha, \beta, p, \epsilon)$ is a constant.
We also denote $M = \max_{\theta, v} \|f_{\theta, v}\|_{L^2} = S \|f\|_{L^2}$ to be the size of largest wave packet.
We will use both notations ($S$ and $M$) in our proofs.

Let us emphasize that the decomposition is \emph{not unique}, and can lead to different $S$.
The constant $C(\alpha, \beta, p, \epsilon)$ is independent of the function $f$ and the choice of the decomposition.


Also, if for some $p$, the pair $\alpha_1$, $\beta_1$, and $\alpha_2$, $\beta_2$ make (\ref{eq:mainieq}) hold, respectively, then for any $\lambda \in [0, 1]$, so does the pair $\lambda \alpha_1 + (1 - \lambda) \alpha_2$, $\lambda \beta_1 + (1 - \lambda) \beta_2$.

The results to be presented in this text are as following:
\begin{thm}   \label{thm:main}
For $p \in [2, 6]$, the inequality (\ref{eq:mainieq}) holds for any smooth function $f: [-1, 1] \rightarrow \C$, and any of its wave packet decomposition, if $p, \alpha, \beta$ satisfies
\begin{equation}  \label{eq:thm:main}
\begin{cases}
\beta \leq 1\\
4 p\alpha + p \geq 6 \\
2 p\alpha - p\beta + p \geq 4 \\
4 \alpha - \beta \geq 0 \\
12 p \alpha - 4 p\beta + 3p \geq 14
\end{cases} ,
\end{equation}
and only if $p, \alpha, \beta$ satisfies
\begin{equation}  \label{eq:thm:main:rv}
\begin{cases}
\beta \leq 1 \\
4 p\alpha + p \geq 6 \\
2 p\alpha - p\beta + p \geq 4 \\
4 \alpha - \beta \geq 0
\end{cases}.
\end{equation}
\end{thm}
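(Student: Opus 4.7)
The plan is to split the proof into the sufficiency direction ((\ref{eq:thm:main}) implies (\ref{eq:mainieq})) and the necessity direction ((\ref{eq:mainieq}) implies (\ref{eq:thm:main:rv})). Sufficiency will be reduced by interpolation to Theorem \ref{thm:key} and the classical Strichartz bound (\ref{eq:trf}); necessity will be established by explicit test functions, one family per constraint of (\ref{eq:thm:main:rv}).

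For sufficiency, I will exploit two closure properties of the set of valid triples $(p,\alpha,\beta)$: convex combination in $(\alpha,\beta)$ at fixed $p$ (by taking the geometric mean of two valid bounds, as the authors note just before the theorem), and H\"older interpolation
\[
\|Ef\|_{L^p(B_R)} \leq \|Ef\|_{L^{p_1}(B_R)}^{\theta}\|Ef\|_{L^{p_2}(B_R)}^{1-\theta}, \qquad \tfrac{1}{p} = \tfrac{\theta}{p_1} + \tfrac{1-\theta}{p_2},
\]
which is affine in $(1/p,\alpha,\beta)$. Direct substitution shows that the three key triples $(4,\tfrac18,\tfrac14)$, $(5,\tfrac1{20},\tfrac15)$, $(6,\tfrac16,\tfrac23)$ of Theorem \ref{thm:key} all lie on the plane $12p\alpha - 4p\beta + 3p = 14$, and since this plane is affine, every H\"older interpolant of a pair of key points again lies on it. Thus the three key inequalities sweep out the whole constraint surface (v): at each fixed $p \in [4,6]$, I obtain at least two distinct points on the (v) line (from Theorem \ref{thm:key} directly at $p = 4, 5, 6$ and from H\"older interpolation of two other key points at non-integer $p$), and convex combination in $(\alpha,\beta)$ traces out the binding segment. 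Combined with Strichartz (\ref{eq:trf}) at the vertex $(3/(2p)-1/4, 0)$ and with H\"older interpolation down against the $p=2$ bound $\|Ef\|_{L^2(B_R)} \leq R^{1/2}\|f\|_{L^2}$ from (\ref{eq:ep2}), further convex combinations cover the full polytope (\ref{eq:thm:main}) for all $p \in [2,6]$.

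For necessity, I construct four families of examples. The single-wave-packet $f_1$ from the introduction has $S=1$ and saturates (\ref{eq:trf}), forcing $4p\alpha+p\geq 6$. The smooth bump $f_0$ on $[-1,1]$ has $S\sim R^{-1/4}$ with $\|Ef_0\|_{L^p(B_R)}\sim R^{\max(2/p-1/2,0)}$, forcing $4\alpha-\beta\geq 0$. The uniform random-sign function $f=\sum_{\theta,v}\sigma_{\theta,v}\phi_{\theta,v}$ with $\sigma_{\theta,v}=\pm 1$ and all $\sim R$ wave packets of equal magnitude has $S\sim R^{-1/2}$ and, by incoherent summation of the $\sim R^{1/2}$ tubes through each point, satisfies $\|Ef\|_{L^p(B_R)}\sim R^{2/p-1/2}\|f\|_{L^2}$, forcing $2p\alpha-p\beta+p\geq 4$; the same example at its extremal $S = R^{-1/2}$ forces $\beta\leq 1$. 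The main obstacle I anticipate is the sufficiency argument at the $\beta = 1$ edge of the polytope: H\"older interpolation among the three key points of Theorem \ref{thm:key} can never exceed $\beta = 2/3$, and the naive pointwise tube-count $|Ef|\lesssim R^{1/4+\delta}M$ coming from Corollary \ref{cor:wpd:bd} is too lossy once integrated over $B_R$. Producing a sharp $\|Ef\|_{L^p(B_R)}\lesssim R^{\alpha+\epsilon}M$ at $\beta = 1$ with $\alpha$ matching the binding constraint will likely need a dyadic pigeonhole on wave-packet amplitudes combined with Corollary \ref{cor:wpd:bd}, and reconciling this bound with the precise corner of (\ref{eq:thm:main}) is the most delicate bookkeeping step.
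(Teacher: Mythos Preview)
Your overall architecture matches the paper's: reduce sufficiency to a finite list of endpoint triples (the three from Theorem~\ref{thm:key}, plus Strichartz at $p=2$ and $p=6$) and then interpolate, and prove necessity by testing against explicit families. Two pieces, however, are not yet in place.

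\textbf{Sufficiency near $\beta=1$.} You correctly observe that H\"older interpolation and convex combination among the five endpoints never produce $\beta>2/3$. Your proposed fix (dyadic pigeonhole on amplitudes plus Corollary~\ref{cor:wpd:bd}) is more elaborate than what is actually needed. The paper's device is Lemma~\ref{lemma:ext}: after using Corollary~\ref{cor:wpd:bd} to discard wave packets whose tubes miss $B_R$, one has only $\lesssim R^{1+\delta}$ surviving packets, hence $\|f'\|_{L^2}^2\lesssim R^{1+\delta}M^2$. Substituting this into the already-known bound at $(\alpha,\beta)$ lets one trade powers of $\|f\|_{L^2}$ for powers of $M$ at the exchange rate $\|f\|_{L^2}\mapsto R^{1/2}M$, which is exactly the slide $(\alpha,\beta)\mapsto(\alpha+\lambda,\beta+2\lambda)$ for any $\lambda>0$ with $\beta+2\lambda\le 1$. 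This one-line bootstrap, not a separate sharp $\beta=1$ estimate, is what reaches the $\beta=1$ face; note in particular that the direction $(1,2)$ is parallel to the face $2p\alpha-p\beta+p=4$, so the slide carries the endpoint $U$ (and its interpolants) straight up to the vertices $M,N$ in Figure~\ref{fig:1}.

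\textbf{Necessity of $\beta\le 1$.} Your random-sign example has $S\sim R^{-1/2}$, and plugging that into (\ref{eq:mainieq}) yields only $2p\alpha-p\beta+p\ge 4$; it cannot isolate $\beta\le 1$ because $S$ is rigidly coupled to $R$. To force $\beta\le 1$ one needs $S\to 0$ with $R$ held fixed, so that the $S^{\beta}$ factor on the right of (\ref{eq:mainieq}) can be driven to zero faster than the left side. The paper does this with a bump of width $U\in(0,R^{-1/2})$: it sits in a single cap $f_\theta$, has $\|f\|_{L^2}\sim U^{1/2}$, $M\lesssim R^{1/4}U$, and $\|Ef\|_{L^p(B_R)}\gtrsim R^{1/p}U$, so sending $U\to 0$ forces the exponent inequality $1\ge \tfrac{\beta}{2}+\tfrac{1-\beta}{2}+\tfrac{\beta}{2}$, i.e.\ $\beta\le 1$. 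Your other necessity examples ($f_1$ for $4p\alpha+p\ge 6$, the bump $f_0$ for $4\alpha-\beta\ge 0$, Khintchine-type random signs for $2p\alpha-p\beta+p\ge 4$) are legitimate alternatives to the paper's Bundle and Star scenarios, but you should make the probabilistic step explicit (Khintchine plus Fubini to pass from expectation to some choice of signs).
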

The results are better illustrated via a diagram of the parameter space (see Figure \ref{fig:1}):
we prove that (\ref{eq:mainieq}) holds in the convex hull supported by $M, N, X, Y, U, V, W$, and its extension in the $\alpha p$ direction;
whether (\ref{eq:mainieq}) holds inside the tetrahedron $UVWF$ is still open;
and for the remaining areas between $p = 2$ and $p = 6$ we've shown that (\ref{eq:mainieq}) cannot be true.

It's worth noting that the above result means that we've found all possible pairs $\alpha$, $\beta$ for $2 \leq p \leq 4$ and $p = 6$;
and for $5 \leq p < 6$, the bounds we have cannot be improved when $S \geq R^{- \frac{1}{4}}$.

\begin{figure}  
\centering
    \includegraphics[width=0.8\textwidth]{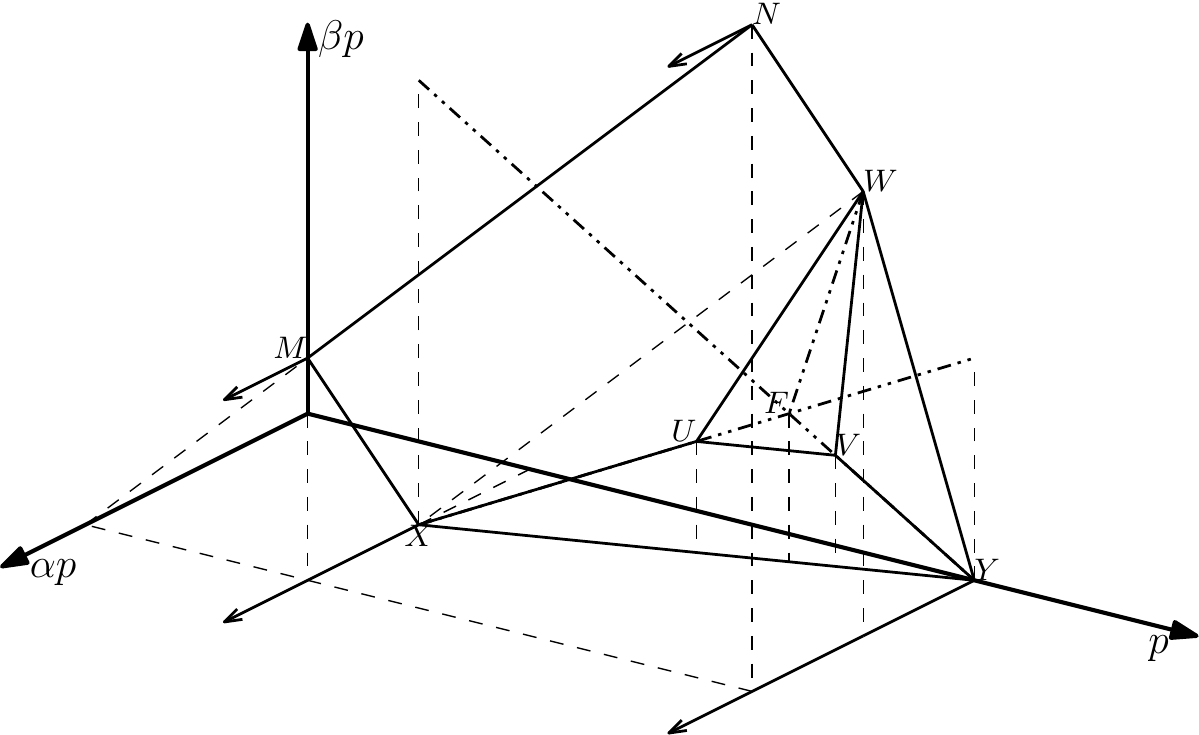}
    \caption{The parameter space} \label{fig:1}
\end{figure}


\subsection{Initial Steps and the Structure of Proof}
There are a few observations which make the proof much clearer.

First, it is obvious that if (\ref{eq:mainieq}) holds for $p, \alpha, \beta$, then it also holds for $p, \alpha', \beta$, for $\alpha' > \alpha$.

Besides, if (\ref{eq:mainieq}) holds for some $p_1, \alpha_1, \beta_1$, and $p_2, \alpha_2, \beta_2$, then for any $\lambda \in [0, 1]$, (\ref{eq:mainieq}) holds for $p_1\lambda + p_2(1 - \lambda), \frac{\alpha_1\lambda + \alpha_2(1 - \lambda)}{p_1\lambda + p_2(1 - \lambda)}, \frac{\beta_1\lambda + \beta_2(1 - \lambda)}{p_1\lambda + p_2(1 - \lambda)}$.
This directly follows H\"{o}lder's inequality.

Also, note that inside $B_R$, there are \emph{essentially} $\lesssim R$ wave packets, and it suffices to consider the cases where $R^{-\frac{1}{2}} \lesssim S \lesssim 1$.
In other words, we have the following Lemma:
\begin{lem}  \label{lemma:ext}
If (\ref{eq:mainieq}) holds for some $p, \alpha, \beta$, then for any $\lambda > 0$, it also holds for $p, \alpha + \lambda, \beta + 2\lambda$, with $\beta + 2\lambda \leq 1$.
\end{lem}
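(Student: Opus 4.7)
The plan is to exploit the geometric fact that only $\lesssim R$ wave packets have tubes meeting $B_R$, so the effective maximum wave packet size is forced to satisfy $M\gtrsim R^{-1/2}\|f\|_{L^2}$, equivalently $S\gtrsim R^{-1/2}$, once we discard wave packets whose contribution to $B_R$ is negligible. Given such a lower bound on $S$, the target inequality follows from the assumed one by a short arithmetic manipulation: the factor $R^{\lambda}S^{2\lambda}$ that appears when one passes from $(\alpha,\beta)$ to $(\alpha+\lambda,\beta+2\lambda)$ is $\gtrsim 1$, so the proposed right-hand side is merely a weakening of the existing one.

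Concretely, fix a wave packet decomposition of size $R$ and split $f=f_1+f_2$, where $f_1$ collects those $f_{\theta,v}$ whose enlarged tube $R^{1+\delta}T_{\theta,v}$ meets $B_R$ for a small $\delta>0$. By Corollary~\ref{cor:wpd:bd}, every $(x,t)\in B_R$ satisfies $|Ef_2(x,t)|\lesssim_{\delta,N} R^{-N}\|f\|_{L^2}$, so the contribution of $f_2$ to $\|Ef\|_{L^p(B_R)}$ is negligible for $N$ large. A direct geometric count shows that $f_1$ has at most $CR$ wave packets: there are $\sim R^{1/2}$ admissible $\theta\in R^{-1/2}\Z\cap[-1,1]$ and, for each $\theta$, $\sim R^{1/2}$ values of $v\in R^{1/2}\Z$ whose enlarged tube meets $B_R$. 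Letting $M_1$ be the max coefficient of $f_1$'s decomposition, near-orthogonality gives $\|f_1\|_{L^2}^2\sim\sum|c_{\theta,v}|^2\lesssim R\,M_1^2$, and hence $M_1\gtrsim R^{-1/2}\|f_1\|_{L^2}$.

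Applying the assumed inequality~(\ref{eq:mainieq}) to $f_1$ yields $\|Ef_1\|_{L^p(B_R)}\lesssim R^{\alpha+\epsilon}M_1^{\beta-\epsilon}\|f_1\|_{L^2}^{1-\beta+\epsilon}$. Splitting $M_1^{\beta-\epsilon}=M_1^{\beta+2\lambda-\epsilon'}\cdot M_1^{-(2\lambda+\epsilon-\epsilon')}$ and bounding the negative power via $M_1\gtrsim R^{-1/2}\|f_1\|_{L^2}$ transfers a factor $R^{\lambda+(\epsilon-\epsilon')/2}$ into the $R$-exponent while reshuffling powers of $\|f_1\|_{L^2}$; after the dust settles, using $M_1\leq M$ and $\|f_1\|_{L^2}\leq\|f\|_{L^2}$ (whose exponents are nonnegative because $\beta+2\lambda\leq 1$), one arrives at $\lesssim R^{\alpha+\lambda+(3\epsilon-\epsilon')/2}M^{\beta+2\lambda-\epsilon'}\|f\|_{L^2}^{1-\beta-2\lambda+\epsilon'}=R^{\alpha+\lambda+(3\epsilon-\epsilon')/2}S^{\beta+2\lambda-\epsilon'}\|f\|_{L^2}$. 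Taking $\epsilon'=\epsilon''$ and $\epsilon\leq\epsilon''$ makes the $R$-exponent $\leq\alpha+\lambda+\epsilon''$, which is the desired inequality with arbitrarily small slack.

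The main technical point I expect is verifying that $f_1$ admits a wave packet decomposition (in the sense of Proposition~\ref{prop:wpd}) to which~(\ref{eq:mainieq}) may legitimately be applied and whose max coefficient is comparable to at most $M$. One route is to invoke Proposition~\ref{prop:wpd} afresh on $f_1$ and check that the resulting max is still $\lesssim M$; alternatively one can verify directly that the restricted family $\{f_{\theta,v}\}_{(\theta,v)\in f_1}$ inherits the near-orthogonality $\sum|c_{\theta,v}|^2\sim\|f_1\|_{L^2}^2$ and the other defining properties of Proposition~\ref{prop:wpd}. Either route should go through cleanly; the remaining $\epsilon$-bookkeeping is routine.
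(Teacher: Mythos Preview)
Your proposal is correct and follows essentially the same route as the paper. Both arguments split off the wave packets whose (slightly enlarged) tubes miss $B_R$ via Corollary~\ref{cor:wpd:bd}, use the count of at most $\sim R^{1+O(\delta)}$ surviving packets to obtain $\|f_1\|_{L^2}\lesssim R^{1/2+O(\delta)}M$, apply the assumed inequality to $f_1$, and then trade powers of $\|f_1\|_{L^2}$ for powers of $M$ and $R$; your phrasing $M_1\gtrsim R^{-1/2}\|f_1\|_{L^2}$ is just the contrapositive of the paper's $\|f'\|_{L^2}\lesssim R^{1/2}M$, and the subsequent exponent shuffling is algebraically equivalent. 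The technical point you flag---that the restricted family $\{f_{\theta,v}\}$ is itself a legitimate wave packet decomposition of $f_1$ with $\max|c_{\theta,v}|\le M$---is handled in the paper by the one-line assertion ``this gives a wave packet decomposition of $f'$,'' so your caution there is if anything more scrupulous than the original.
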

\begin{proof}
Take $\delta = \frac{\epsilon}{2}$.
Let $f' = \sum_{\theta, v: R^{1 + \delta}T_{\theta, v} \bigcap B_R \neq \emptyset} f_{\theta, v}$, and this gives a wave packet decomposition of $f'$.
Then $\| f' \|_{L^2}^2 \sim \sum_{\theta, v: R^{1 + \delta}T_{\theta, v} \bigcap B_R \neq \emptyset} |c_{\theta, v}|^2 \lesssim R^{1 + \frac{\delta}{2}} M^2$.
By Corollary \ref{cor:wpd:bd}, 
there is
\begin{equation}
\| Ef - Ef' \|_{L^p B_R} \lesssim_{\delta, N, \lambda} R^{-N} S^{\beta + 2 \lambda - \delta} \| f - f'\|_{L^2} \lesssim R^{-N} S^{\beta + 2 \lambda - \delta} \| f\|_{L^2}
\end{equation}
for any $N > 0$, since $\beta + 2\lambda - \delta \in [0, 1)$ (when $\delta$ sufficiently small).
Using the fact that (\ref{eq:mainieq}) holds for $p, \alpha, \beta$, we conclude
\begin{multline}
\| Ef \|_{L^p(B_R)} \lesssim_{\delta} \| Ef' \|_{L^p(B_R)} + R^{\alpha + \delta}S^{\beta + 2 \lambda - \delta}\| f \|_{L^2}
\\
\lesssim_{\delta}
R^{\alpha + \delta} M^{\beta - \delta} \| f' \|_{L^2}^{1 + \delta - \beta}
+
R^{\alpha + \delta}S^{\beta + 2 \lambda}\| f \|_{L^2}
\\
\lesssim_{\delta}
R^{\alpha + \delta  + \lambda \left( 1 + \frac{\delta}{2} \right) } M^{\beta - \delta + 2 \lambda} \| f' \|_{L^2}^{1 + \delta - \beta - 2\lambda}
+
R^{\alpha + \lambda + \delta}S^{\beta + 2 \lambda}\| f \|_{L^2}
\lesssim
R^{\alpha + \lambda + \epsilon} S^{\beta + 2 \lambda - \epsilon} \| f \|_{L^2}.
\end{multline}
\end{proof}

Then
it suffices to prove (\ref{eq:mainieq}) for the end points:
\begin{itemize}
\item $p = 2$, $\alpha = \frac{1}{2}$, $\beta = 0$. This corresponds to point $X = (2, 1, 0)$ in Figure \ref{fig:1}, and is part of (\ref{eq:trf}).
\item $p = 4$, $\alpha = \frac{1}{8}$, $\beta = \frac{1}{4}$. This corresponds to point $U = \left(4, \frac{1}{2}, 1 \right)$ in Figure \ref{fig:1},
and is proved in Section \ref{sec:l4}, with arithmetic approaches.
\item $p = 5$, $\alpha = \frac{1}{20}$, $\beta = \frac{1}{5}$. 
This corresponds to point $V = \left(5, \frac{1}{4}, 1 \right)$ in Figure \ref{fig:1},
and is proved by polynomial partitioning in Section \ref{sec:pp}.
\item $p = 6$, $\alpha = 0$, $\beta = 0$.
This corresponds to point $Y = \left(6, 0, 0 \right)$ in Figure \ref{fig:1},
and is also part of (\ref{eq:trf}).
\item $p = 6$, $\alpha = \frac{1}{6}$, $\beta = \frac{2}{3}$.
This corresponds to point $W = \left(6, 1, 4 \right)$ in Figure \ref{fig:1},
and is proved using $l^2$ Decoupling Theorem in Section \ref{sec:dc}.
\end{itemize}

In Section \ref{sec:em} we give examples, showing that (\ref{eq:thm:main:rv}) is a necessary condition for (\ref{eq:mainieq}) to hold.
In Section \ref{sec:cj} we make the conjecture, which implies that (\ref{eq:thm:main:rv}) is also sufficient.






\bigskip
\section{Explicit Computation: Proof for $p=4$}  \label{sec:l4}

In this section we prove the bound for $p = 4$ by direct computation, writing the integral in $L^4$ norm as a sum of integrals of the wave packets.
By doing this we can exploit the fact that different wave packets are far away in either physical or frequency space.

We point out that bounds given in this approach are always sharp; but it can be used only when $p$ is an even integer.

\begin{prop}  \label{prop:pc}
When $p = 4$,
(\ref{eq:mainieq}) holds for $\alpha = \frac{1}{8}$
and $\beta = \frac{1}{4}$   .
\end{prop}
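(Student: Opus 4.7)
The plan is to compute $\|Ef\|_{L^4(B_R)}^4$ directly via a Plancherel expansion, exploiting the arithmetic identity that, on the parabola, $\omega_1+\omega_2=\omega_3+\omega_4$ together with $\omega_1^2+\omega_2^2=\omega_3^2+\omega_4^2$ forces $\{\omega_1,\omega_2\}=\{\omega_3,\omega_4\}$ as multisets.

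First I would dominate the norm by $\|(Ef)^2\eta_R\|_{L^2}^2$ and pass to the Fourier side. Since $\widehat{(Ef)^2}=(f^*\diff\sigma)\ast(f^*\diff\sigma)$ and $\widehat{\eta_R}$ is concentrated in a ball of radius $\sim R^{-1}$ which does not spread the $R^{-1/2}$-scale wave packet supports, substituting the wave packet decomposition $f=\sum_{\theta,v}c_{\theta,v}\phi_{\theta,v}$ gives
\[
\|Ef\|_{L^4(B_R)}^4 \lesssim \sum_{(\theta_i,v_i)_{i=1}^{4}} c_{\theta_1,v_1}c_{\theta_2,v_2}\overline{c_{\theta_3,v_3}c_{\theta_4,v_4}}\,J\!\left((\theta_i,v_i)\right),
\]
where $J$ is a Fourier-side inner product of four wave packets. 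By Theorem \ref{thm:sp}, $J$ is negligible unless $\theta_1+\theta_2=\theta_3+\theta_4$ and $\theta_1^2+\theta_2^2=\theta_3^2+\theta_4^2$ within $O(R^{-1/2})$, i.e.\ unless $\{\theta_1,\theta_2\}=\{\theta_3,\theta_4\}$ as multisets modulo the lattice $R^{-1/2}\Z$.

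For the admissible quadruples I would split into the diagonal regime $\theta_1=\theta_2$ and the off-diagonal regime $\theta_1\neq\theta_2$. In the diagonal regime, the parallel tubes $T_{\theta,v}$ are essentially disjoint across $v$, so (\ref{eq:wpd:bd}) gives the refined bound $\int|Ef_{\theta}|^4 \lesssim R^{1/2} M^2 \|f_\theta\|_{L^2}^2$, which sums to $R^{1/2}M^2\|f\|_{L^2}^2 = R^{1/2}S^2\|f\|_{L^2}^4 \leq R^{1/2}S\|f\|_{L^2}^4$. In the off-diagonal regime, the product $E\phi_{\theta_1,v_1}E\phi_{\theta_2,v_2}$ is essentially supported on a rhombus of area $\sim R/|\theta_1-\theta_2|$, producing the bilinear weight $1/|\theta_1-\theta_2|$; varying $(v_1,v_2)$ yields disjoint rhombi and hence spatial orthogonality across $v$'s.

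After these reductions the off-diagonal contribution becomes
\[
\sum_{\theta_1\neq\theta_2}\frac{\|f_{\theta_1}\|_{L^2}^2\|f_{\theta_2}\|_{L^2}^2}{|\theta_1-\theta_2|}
\]
(plus subdominant cross terms), which must be bounded by $R^{1/2+\epsilon}M\|f\|_{L^2}^3$. To produce the extra factor of $S^{1/4-\epsilon}$ I would apply $|c_{\theta,v}|^2\leq M|c_{\theta,v}|$ to exactly one of the four coefficients, use Cauchy--Schwarz on the remaining three summations to recover $\|f\|_{L^2}^3$, and perform a dyadic decomposition in $|\theta_1-\theta_2|\in[R^{-1/2},1]$ whose logarithmic factor is absorbed into $R^{\epsilon}$. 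The main obstacle lies precisely in this last step: a naive $\ell^2$-summation over $(v_1,v_2,v_3,v_4)$ only recovers the standard Strichartz bound $R^{1/2}\|f\|_{L^2}^4$, so one must carefully combine the arithmetic restriction $\{\theta_1,\theta_2\}=\{\theta_3,\theta_4\}$ (which forces $(v_3,v_4)\in\{(v_1,v_2),(v_2,v_1)\}$ up to $O(1)$ lattice shifts, from tube intersection requirements) with the $v$-orthogonality so that exactly one factor of $M$ survives and the arithmetic count of admissible tubes matches the target exponent $R^{1/2+\epsilon}S^{1-\epsilon}$.
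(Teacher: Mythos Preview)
Your opening reduction—Plancherel, then the arithmetic on the parabola forcing $\{\theta_1,\theta_2\}=\{\theta_3,\theta_4\}$ up to $O(R^{-1/2})$, landing on $\sum_{\theta_1,\theta_2}\int|\eta_R Ef_{\theta_1}Ef_{\theta_2}|^2$—is exactly what the paper does. The diagonal estimate is fine. The gap is in the off-diagonal step: the displayed bound
\[
\sum_{\theta_1\neq\theta_2}\frac{\|f_{\theta_1}\|_{L^2}^2\|f_{\theta_2}\|_{L^2}^2}{|\theta_1-\theta_2|}
\]
is genuinely too weak to give $R^{1/2+\epsilon}M\|f\|_{L^2}^3$. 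Take two adjacent $\theta$'s each carrying many equal wave packets; the displayed sum is $\sim R^{1/2}\|f\|_{L^2}^4$, which misses the target by a factor of $\|f\|_{L^2}/M$. The loss occurs because in passing from tubes to caps you dropped the constraint that only those pairs $(v_1,v_2)$ whose tubes actually meet inside $B_R$ contribute—for nearly parallel directions this kills almost all pairs, and that saving is essential.

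Your parenthetical fix (that $(v_3,v_4)$ is forced to equal $(v_1,v_2)$ up to $O(1)$ by tube overlap) is correct, but it simply returns you to the tube-level quantity
\[
\sum_{\theta_1,\theta_2,v_1,v_2}|c_{\theta_1,v_1}|^2|c_{\theta_2,v_2}|^2\, I_{12},
\qquad I_{12}:=R^{-1}\bigl|RT_{\theta_1,v_1}\cap RT_{\theta_2,v_2}\cap B_R\bigr|,
\]
which is where the real work happens. From here your idea of peeling off one $M$ \emph{does} work, but only if you apply it at this level and then Cauchy--Schwarz in $(\theta_2,v_2)$ using the key geometric count $\sum_{\theta_2,v_2}I_{12}^2\lesssim R\log R$ for each fixed $(\theta_1,v_1)$ (this is where the weight $|\theta_1-\theta_2|^{-1}$ and the restricted number $\sim R^{1/2}|\theta_1-\theta_2|$ of intersecting $v_2$'s combine). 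The paper reaches the same endpoint by squaring the entire tube-level sum, using AM--GM to produce a $|c_{\theta,v}|^4\le M^2|c_{\theta,v}|^2$, and then invoking exactly this $\sum I_{12}^2$ bound (their (\ref{eq:pc:pf5})). So the missing ingredient in your outline is not a new restriction on $v$'s but the quantitative estimate on $\sum_{\theta_2,v_2}I_{12}^2$, applied before you collapse to cap norms.
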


\begin{proof}
By Plancherel Theorem there is
\begin{equation}
\begin{split}
& \| Ef \|_{L^4(B_R)} = \int_{B_R} \left|  \sum_{\theta}  E{f}_{\theta} \right|^4 
\lesssim 
\int \left| \eta_R \sum_{\theta}  E{f}_{\theta} \right|^4 
=
\int \left( \eta_R \sum_{\theta}  E{f}_{\theta} \right)^2
\left( \overline{\eta_R \sum_{\theta}  E{f}_{\theta} }\right)^2
\\
= & \sum_{\theta_1, \theta_2, \theta_3, \theta_4} \int \eta_R E{f}_{\theta_1} E{f}_{\theta_2}\cdot \overline{\eta_R E{f}_{\theta_3} E{f}_{\tau_4}} \\
= & \sum_{\theta_1, \theta_2, \theta_3, \theta_4} \int \left( \widehat{\eta_R} \ast \widehat{E{f}_{\theta_1}} \ast \widehat{E{f}_{\theta_2}} \right) \cdot \left( \overline{\widehat{\eta_R}} \ast \overline{\widehat{E{f}_{\theta_3}}} \ast \overline{\widehat{E{f}_{\theta_4}}} \right) \\
= & \sum_{\theta_1, \theta_2, \theta_3, \theta_4} \int \left( \widehat{\eta_R} \ast f^*_{\theta_1}\diff \sigma \ast f^*_{\theta_2}\diff \sigma \right) \cdot \left( \overline{\widehat{\eta_R}} \ast \overline{ f^*_{\theta_3} }\diff \sigma \ast \overline{ f^*_{\theta_4}}\diff \sigma  \right) .
\end{split}
\end{equation}
The convolution $\widehat{\eta_R} \ast f^*_{\theta_1}\diff \sigma \ast f^*_{\theta_2}\diff \sigma$ is supported in the $R^{-1}$ neighborhood of the Minkowski sum of the supports of $f^*_{\theta_1}$ and $f^*_{\theta_2}$;
and the convolution $\overline{\widehat{\eta_R}} \ast \overline{ f^*_{\theta_3} }\diff \sigma \ast \overline{ f^*_{\theta_4}}\diff \sigma$ is supported in the $R^{-1}$ neighborhood of the Minkowski sum of the supports of $f^*_{\theta_3}$ and $f^*_{\theta_4}$.
The integral
\begin{equation}   \label{eq:dp:pf0}
\int \left( \widehat{\eta_R} \ast f^*_{\theta_1}\diff \sigma \ast f^*_{\theta_2}\diff \sigma \right) \cdot \left( \overline{\widehat{\eta_R}} \ast \overline{ f^*_{\theta_3} }\diff \sigma \ast \overline{ f^*_{\theta_4}}\diff \sigma  \right)
\end{equation}
is none-zero only if the supports of $\widehat{\eta_R} \ast f^*_{\theta_1}\diff \sigma \ast f^*_{\theta_2}\diff \sigma$ and $\overline{\widehat{\eta_R}} \ast \overline{ f^*_{\theta_3} }\diff \sigma \ast \overline{ f^*_{\theta_4}}\diff \sigma $ have non-empty intersection,
and only if there exists
$x_i$ in the $R^{-\frac{1}{2}}$-neighborhood of $\theta_i$, $i=1,2,3,4$,
such that
\begin{equation}  \label{eq:dp:pf1}
\begin{cases}
\left| x_1 + x_2 - x_3 - x_4 \right| & \leq 2R^{-1}  , \\
\left| x_1^2 + x_2^2 - x_3^2 - x_4^2 \right| & \leq 2R^{-1}  .
\end{cases}
\end{equation}
The first inequality in (\ref{eq:dp:pf1}) implies that
\begin{equation}
\left| (x_1 + x_2 - x_3)^2 - x_4^2 \right| \leq 8R^{-1}  ,
\end{equation}
and we have
\begin{equation}
2\left| x_1 - x_3 \right|\left| x_2 - x_3 \right| = \left| x_1^2 + x_2^2 - x_3^2 - (x_1 + x_2 - x_3)^2 \right| \leq 9R^{-1}  ,
\end{equation}
thus
\begin{equation}
\min \{ |x_1 - x_3|, |x_2 - x_3| \} \leq 3R^{-\frac{1}{2}}  .
\end{equation}
This implies that $\min \{ |\theta_1 - \theta_3|, |\theta_2 - \theta_3| \} \leq 4R^{-\frac{1}{2}}$;
and with the first inequality in (\ref{eq:dp:pf1}) we conclude that either $|\theta_1 - \theta_3|, |\theta_2 - \theta_4| \leq 4R^{-\frac{1}{2}}$, or $|\theta_2 - \theta_3|, |\theta_1 - \theta_4| \leq 4R^{-\frac{1}{2}}$.

Then for fixed $(\theta_1, \theta_2)$,
there are $\lesssim 1$ pairs of $(\tau_3, \tau_4)$,
such that (\ref{eq:dp:pf0}) is non-zero.

Now we have
\begin{multline}  \label{eq:pf002}
\int_{B_R} \left|  \sum_{\theta}  E{f}_{\theta} \right|^4  \lesssim
\sum_{\theta_1, \theta_2} \int \left| \eta_R E{f}_{\theta_1} E{f}_{\theta_2} \right|^2 
\\
=
\sum_{\theta_1, \theta_2} \sum_{u_1, u_2 \in R^{\frac{1}{2}}\Z}
\int_{RT_{\theta_1, u_1} \bigcap RT_{\theta_2, u_2}}
\left|\eta_R \sum_{v_1, v_2 \in R^{\frac{1}{2}}\Z } Ef_{\theta_1, v_1} Ef_{\theta_2, v_2}\right|^2 
\\
\lesssim
\sum_{\theta_1, \theta_2} \sum_{v_1, v_2, u_1, u_2 \in R^{\frac{1}{2}}\Z}
\int_{RT_{\theta_1, u_1} \bigcap RT_{\theta_2, u_2}}
|\eta_R Ef_{\theta_1, v_1}Ef_{\theta_2, v_2}|^2 (1 + |u_1 - v_1| R^{-\frac{1}{2}} )^2 (1 + |u_2 - v_2| R^{-\frac{1}{2}} )^2,
\end{multline}
and by (\ref{eq:wpd:bd}), (\ref{eq:pf002}) is further bounded by
\begin{equation}  \label{eq:pc:pf2}
\sum_{\theta_1, \theta_2} \sum_{v_1, v_2, u_1, u_2 \in R^{\frac{1}{2}}\Z} 
\frac{R^{-1}|c_{\theta_1, v_1}c_{\theta_2, v_2}|^2 \int_{RT_{\theta_1, u_1} \bigcap RT_{\theta_2, u_2}} |\eta_R |^2  }{(1 + |u_1 - v_1| R^{-\frac{1}{2}} )^{10} (1 + |u_2 - v_2| R^{-\frac{1}{2}}  )^{10}}.
\end{equation}
Taking the square of (\ref{eq:pc:pf2}), we get
\begin{multline}  \label{eq:pf003}
\sum_{\substack{\theta_1, \theta_2, \theta_3, \theta_4 \\ v_1, v_2, u_1, u_2 \\ v_3, v_4, u_3, u_4 } }
\frac{R^{-2}|c_{\theta_1, v_1}c_{\theta_2, v_2}c_{\theta_3, v_3}c_{\theta_4, v_4}|^2
\left( \int_{RT_{\theta_1, u_1} \bigcap RT_{\theta_2, u_2}} |\eta_R |^2  \right)
\left( \int_{RT_{\theta_3, u_3} \bigcap RT_{\theta_4, u_4}} |\eta_R |^2  \right)
}{
\left( (1 + |u_1 - v_1| R^{-\frac{1}{2}} ) (1 + |u_2 - v_2| R^{-\frac{1}{2}}  ) (1 + |u_3 - v_3| R^{-\frac{1}{2}} ) (1 + |u_4 - v_4| R^{-\frac{1}{2}}  ) \right)^{10}}
\\
\lesssim
\sum_{\substack{\theta_1, \theta_2, \theta_3, \theta_4 \\ v_1, v_2, u_1, u_2 \\ v_3, v_4, u_3, u_4 } }
\frac{R^{-2}|c_{\theta_1, v_1}c_{\theta_3, v_3}c_{\theta_4, v_4}^2|^2
\left( \int_{RT_{\theta_1, u_1} \bigcap RT_{\theta_2, u_2}} |\eta_R |^2  \right)^2
}{
\left( (1 + |u_1 - v_1| R^{-\frac{1}{2}} ) (1 + |u_2 - v_2| R^{-\frac{1}{2}}  ) (1 + |u_3 - v_3| R^{-\frac{1}{2}} ) (1 + |u_4 - v_4| R^{-\frac{1}{2}}  ) \right)^{10}}
\\
+
\sum_{\substack{\theta_1, \theta_2, \theta_3, \theta_4 \\ v_1, v_2, u_1, u_2 \\ v_3, v_4, u_3, u_4 } }
\frac{R^{-2}|c_{\theta_1, v_1}c_{\theta_2, v_2}^2c_{\theta_3, v_3}|^2
\left( \int_{RT_{\theta_3, u_3} \bigcap RT_{\theta_4, u_4}} |\eta_R |^2  \right)^2
}{
\left( (1 + |u_1 - v_1| R^{-\frac{1}{2}} ) (1 + |u_2 - v_2| R^{-\frac{1}{2}}  ) (1 + |u_3 - v_3| R^{-\frac{1}{2}} ) (1 + |u_4 - v_4| R^{-\frac{1}{2}}  ) \right)^{10}}
\\
=
\sum_{\substack{\theta_1, \theta_2, \theta_3, \theta_4 \\ v_1, v_2, u_1, u_2 \\ v_3, v_4, u_3, u_4 } }
\frac{2R^{-2}|c_{\theta_1, v_1}c_{\theta_3, v_3}c_{\theta_4, v_4}^2|^2
\left( \int_{RT_{\theta_1, u_1} \bigcap RT_{\theta_2, u_2}} |\eta_R |^2  \right)^2
}{
\left( (1 + |u_1 - v_1| R^{-\frac{1}{2}} ) (1 + |u_2 - v_2| R^{-\frac{1}{2}}  ) (1 + |u_3 - v_3| R^{-\frac{1}{2}} ) (1 + |u_4 - v_4| R^{-\frac{1}{2}}  ) \right)^{10}} .
\end{multline}
Summing over $v_2, u_3, u_4$, (\ref{eq:pf003}) is bounded by
\begin{equation}  \label{eq:pc:pf3}
\sum_{\substack{\theta_1, \theta_2, \theta_3, \theta_4 \\ v_1, u_1, u_2 \\ v_3, v_4} }
\frac{R^{-2}|c_{\theta_1, v_1}c_{\theta_3, v_3}c_{\theta_4, v_4}^2|^2
\left( \int_{RT_{\theta_1, u_1} \bigcap RT_{\theta_2, u_2}} |\eta_R |^2  \right)^2
}{
\left( 1 + |u_1 - v_1| R^{-\frac{1}{2}}  \right)^{10}} .
\end{equation}
For each $\theta_1, u_1, \theta_2$, there is
\begin{equation}
\sum_{u_2} \int_{RT_{\theta_1, u_1} \bigcap RT_{\theta_2, u_2}} |\eta_R |^2
\sim
\int_{RT_{\theta_1, u_1}} |\eta_R |^2
\lesssim R^{\frac{3}{2}},
\end{equation}
by (\ref{eq:sf:bd}).
Also, for any $\theta_1 \neq \theta_2, u_1, u_2$, there is
\begin{equation}
\int_{RT_{\theta_1, u_1} \bigcap RT_{\theta_2, u_2}} |\eta_R |^2
\lesssim
\int_{RT_{\theta_1, u_1} \bigcap RT_{\theta_2, u_2}} 1 \lesssim R^{\frac{3}{2}} \left( |\theta_1 - \theta_2| R^{\frac{1}{2}} \right)^{-1}. 
\end{equation}
We conclude that, for fixed $\theta_1, u_1$, there is
\begin{equation}  \label{eq:pc:pf5}
\sum_{\theta_2, u_2} \left( \int_{RT_{\theta_1, u_1} \bigcap RT_{\theta_2, u_2}} |\eta_R |^2 \right)^2
\lesssim
R^3 \sum_{\theta_2} \left( |\theta_1 - \theta_2| R^{\frac{1}{2}} \right)^{-1}
\sim R^3 \log(R).
\end{equation}
Plugging (\ref{eq:pc:pf5}) into (\ref{eq:pc:pf3}) and summing over $u_1$, we see that (\ref{eq:pc:pf3}) is bounded by
\begin{equation}
\sum_{\substack{\theta_1, \theta_3, \theta_4 \\ v_1, v_3, v_4} }
R\log(R)\left|c_{\theta_1, v_1}c_{\theta_3, v_3}c_{\theta_4, v_4}^2\right|^2
\leq
R\log(R)M^2\left( \sum_{\theta, u} \left|c_{\theta, v}\right|^2 \right)^3
\sim R\log(R)M^2 \| f\|_{L^2}^6.
\end{equation}
This implies that (\ref{eq:pc:pf2}) $\lesssim_{\epsilon} R^{\frac{1}{2} + 4\epsilon} M \| f \|_{L^2}^3 = R^{\frac{1}{2} + 4\epsilon} S \| f \|_{L^2}^4$, and we finish the proof.
\end{proof}

\bigskip
\section{Polynomial Partitioning: Proof for $p=5$}  \label{sec:pp}

In this section we use the approach of polynomial partitioning.
It was introduced by Larry Guth and Nets Katz in \cite{guth2010erdos}, 
and further used by Larry Guth to the restriction problem
(see \cite{guth2016restriction} \cite{1603.04250}).
We exploit this method in this problem, and prove the following result.
\begin{prop}   \label{prop:pm}
For $p = 5$,
(\ref{eq:mainieq}) holds for $\alpha = \frac{1}{20}$ and $\beta = \frac{1}{5}$.
\end{prop}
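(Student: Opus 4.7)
The plan is to prove the stronger quantitative bound
\[
\|Ef\|_{L^5(B_R)}^{5}\leq C_\epsilon\,R^{1/4+5\epsilon}\,M^{1-5\epsilon}\,\|f\|_{L^2}^{4+5\epsilon},
\]
by induction on the scale $R$, following the polynomial partitioning scheme developed by Guth in \cite{guth2016restriction,1603.04250}. Since $M=S\|f\|_{L^2}$, after relabelling $\epsilon$ this is equivalent to the statement of the proposition. Fix small $\delta=\delta(\epsilon)>0$ and, via the polynomial ham sandwich theorem, pick $P\in\R[x,t]$ of degree $\lesssim D:=R^\delta$ whose zero set partitions $\R^2\setminus Z(P)$ into $\lesssim D^2$ open cells $\{O_i\}$ each carrying roughly an equal share of $\int_{B_R}|Ef|^5$. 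Letting $W$ denote the $R^{1/2}$-neighborhood of $Z(P)$, I split
\[
\int_{B_R}|Ef|^5 \lesssim \sum_i \int_{O_i\setminus W}|Ef|^5 + \int_{W\cap B_R}|Ef|^5,
\]
and argue separately on whichever term dominates.

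For the cellular case I set $f_i:=\sum_{(\theta,v):\,RT_{\theta,v}\cap O_i\neq\emptyset} f_{\theta,v}$, so $|Ef|\sim|Ef_i|$ on $O_i\setminus W$ up to the tail error supplied by Corollary \ref{cor:wpd:bd}. B\'ezout in $\R^2$ asserts that a straight line meets $Z(P)$ in at most $D$ points, so each tube $RT_{\theta,v}$ lies in $\lesssim D$ cells and $\sum_i\|f_i\|_{L^2}^2\lesssim D\|f\|_{L^2}^2$. Combining this with $\|f_i\|_{L^2}\leq\|f\|_{L^2}$ and the balancing of the partition yields, after a brief convexity computation, $\sum_i\|f_i\|_{L^2}^{4+5\epsilon}\lesssim D^{-5\epsilon/2}\|f\|_{L^2}^{4+5\epsilon}$. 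Applying the inductive hypothesis to each $f_i$ (same $R$, same $M$, smaller mass) and summing bounds the cellular contribution by $C_\epsilon\,R^{1/4+5\epsilon}D^{-5\epsilon/2}M^{1-5\epsilon}\|f\|_{L^2}^{4+5\epsilon}$, and the $R^{-5\delta\epsilon/2}$ saving closes the induction in this case.

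For the wall case I split $f=f_{\mathrm{trans}}+f_{\mathrm{tang}}$, where $f_{\mathrm{trans}}$ collects wave packets whose tubes are transverse to $Z(P)$---each meets $W$ in $\lesssim D$ segments of total length $\lesssim DR^{1/2}$ rather than the full $R$---and $f_{\mathrm{tang}}$ gathers the tangential wave packets, whose tubes may lie essentially inside $W$ but whose directions are constrained by the algebraic geometry of $Z(P)$ to $\lesssim D$ clusters of angular size $R^{-1/2}$. The transverse contribution is controlled via the pointwise wave-packet estimate \eqref{eq:wpd:bd} on the thinned effective set, combined with a direct application of the $p=4$ bound from Proposition \ref{prop:pc}, and gains more than enough to be absorbed into the induction. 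The tangential contribution is handled by rescaling each angular cluster to occupy the full cap $[-1,1]$ at an effective scale $R^{1/2}$ and reapplying the induction, with the rescaled wave-packet size $M'$ controlled via Lemma \ref{lemma:wpd:cs}.

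The main obstacle is the tangential wall contribution: a tube tangent to an irreducible component of $Z(P)$ can saturate $W$, and there is no transversality to exploit. The argument must instead rely on the severely restricted set of admissible directions together with a finer-scale induction, and the coordination of two layers of induction with the bookkeeping of the $M$-parameter across scales---in particular absorbing the $R^{\delta/4}$-type loss of Lemma \ref{lemma:wpd:cs} into the $R^{\epsilon}$-budget---is where the technical work concentrates. All remaining losses are of the form $R^{O(\delta)}$ and disappear upon choosing $\delta\ll\epsilon$.
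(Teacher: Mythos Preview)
Your overall architecture matches the paper's, but two key steps do not go through as written. In the cellular case you claim to induct on $R$ yet apply the hypothesis at the \emph{same} $R$, which is circular; and the asserted bound $\sum_i\|f_i\|_{L^2}^{4+5\epsilon}\lesssim D^{-5\epsilon/2}\|f\|_{L^2}^{4+5\epsilon}$ is false under only the constraints you list (concentrate the mass on $D$ cells to get $D\|f\|_{L^2}^{4+5\epsilon}$). The paper repairs both points at once: it takes $D$ to be a large \emph{constant}, adds a second induction parameter $K(f)$ (the number of nonzero wave packets), and in the cellular case pigeonholes to a \emph{single} cell $O_{i_o}$ that misses at least one tube---so $K(f_{O_{i_o}})<K(f)$---while also satisfying $\|f_{O_{i_o}}\|_{L^2}^2\lesssim D^{-1}\|f\|_{L^2}^2$ and $\int_{O_{i_o}}|Ef|^5\gtrsim D^{-2}\int_{B_R}|Ef|^5$. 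The $D^{-\epsilon/2}$ gain comes from this one cell; no summing occurs.

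Your wall treatment also diverges from the paper and leans on a claim you do not justify: that tangential tubes have directions confined to $\lesssim D$ angular clusters. A degree-$D$ planar curve can have tangent lines in every direction, and it is not clear why a tube lying in the $R^{1/2}$-neighborhood of $Z(P)$ must point along one of $D$ fixed directions. The paper sidesteps this entirely. It cuts $Z(\mathcal F_D)\cap 2R^{1/2+\delta}T_{\theta,v}$ into its $\lesssim D$ connected arcs, calls an arc \emph{tangent} when its diameter exceeds $R/G$ (a further constant), and proves a \emph{pointwise overlap} bound: at most $\lesssim D^2GR^{2\delta}$ tangent arcs pass through any point of $\mathcal N$, because each such arc must connect intersection points of $Z(\mathcal F_D)$ with two fixed concentric circles about that point. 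Together with Theorem \ref{thm:pm:wv} this closes the tangential $L^5$ estimate directly---no rescaling, no inner induction. Transverse arcs are handled by tiling $B_R$ into $R/G$-squares and inducting at scale $R/G$ via Lemma \ref{lemma:wpd:cs}; Proposition \ref{prop:pc} is not invoked.
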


The main idea is to use a polynomial of degree $D$ to cut $\R^2$ into $D^2$ cells, each has the same $L^p$ norm,
and bound each cell using an induction argument.
As each wave packet is essentially supported in a tube, which like a line, and thus it is expected to intersect the polynomial curve $\sim D$ times.
Then we expect that on average, each cell intersects $\sim D^{-1}$ of all the wave packets.

We use the following version of polynomial partitioning in our proof, which also follows Stone-Tukey ham sandwich theorem \cite{stone1942}.
\begin{thm} \protect{\cite[Theorem 0.6]{guth2016restriction}}  \label{thm:pp}
For any non-negative $L^1$ function $W$ on $\R$,
and degree $D \geq 1$,
there is a non-zero polynomial $\mathcal{F}$ of degree at most $D$,
whose zero set is denoted as $Z(\mathcal{F})$,
and $\R^2 \backslash Z(\mathcal{F})$ is a union of $\sim D^2$
disjoint cells $O_i$, with all $\int_{O_i} W$ equal.
\end{thm}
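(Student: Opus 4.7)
The plan is to build $\mathcal{F}$ as a product $\prod_{k=1}^{k_0} Q_k$ of factors of progressively larger degree, each chosen to simultaneously bisect the $W$-mass of every cell created by the previous factors. The key engine is the \emph{polynomial ham sandwich theorem}, an immediate consequence of Borsuk--Ulam applied in the linear space of bivariate polynomials of degree $\leq d$. Since that space has dimension $\binom{d+2}{2}$, its unit sphere carries a continuous odd map to $\R^{\binom{d+2}{2}-1}$ sending $Q$ to the vector of signed mass differences $\mu_i(\{Q>0\}) - \mu_i(\{Q<0\})$; Borsuk--Ulam produces a non-zero $Q$ at which every coordinate vanishes, i.e.\ a polynomial of degree $\leq d$ whose zero set simultaneously bisects all $\binom{d+2}{2}-1$ chosen measures $\mu_i$.

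Equipped with this tool, I would iterate. Start with the single cell $\R^2$ and the measure $W\,\diff x$. Inductively, after step $k-1$ there are $2^{k-1}$ cells of equal $W$-mass; apply the ham-sandwich tool to the $2^{k-1}$ restricted measures $W\mathbf{1}_{O_i}\,\diff x$, choosing $d_k$ minimal with $\binom{d_k+2}{2}-1 \geq 2^{k-1}$. The resulting $Q_k$ doubles the number of equal-mass cells and has degree $d_k \lesssim 2^{k/2}$. Stop at the largest $k_0$ for which the cumulative degree $\sum_{k\leq k_0} d_k \lesssim 2^{k_0/2}$ stays at most $D$; this gives $k_0 \sim 2\log_2 D$ and partitions $\R^2 \setminus Z(\mathcal{F})$, with $\mathcal{F} := \prod_k Q_k$, into $2^{k_0} \gtrsim D^2$ cells of equal $W$-mass. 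For the matching upper bound, the zero set of a degree-$D$ bivariate polynomial cuts $\R^2$ into at most $O(D^2)$ connected components, by the classical Milnor--Thom bound (or by B\'ezout applied to a generic linear perturbation), closing the $\sim D^2$ count.

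The one delicate point is the continuity of the signed-mass-difference map used inside Borsuk--Ulam: a potential discontinuity can occur at a $Q$ whose zero set carries positive $\mu_i$-mass. In our setting, however, each measure is absolutely continuous with density $W \in L^1$, and the real zero set of a non-zero bivariate polynomial has two-dimensional Lebesgue measure zero, so this issue never materializes; continuity is genuine and Borsuk--Ulam applies directly. The remaining bookkeeping --- ensuring the running degree stays $\leq D$ while the cell count doubles at each step --- is handled by the geometric match between $\binom{d+2}{2} - 1 \sim d^2/2$ and the factor-of-two growth per step. This arithmetic matching of degree to cell count is the only concrete thing to verify beyond invoking the classical Borsuk--Ulam theorem, so I expect it to be the main (and quite mild) obstacle.
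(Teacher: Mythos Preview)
The paper does not give its own proof of this theorem; it is quoted verbatim as \cite[Theorem 0.6]{guth2016restriction} and used as a black box. Your proposal correctly reproduces the standard Guth--Katz argument from that reference (iterated polynomial ham sandwich via Borsuk--Ulam, with degrees $d_k \sim 2^{k/2}$ summing to $\lesssim D$), so there is nothing to compare and no gap to flag.

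One minor remark: the appeal to Milnor--Thom for the upper bound on the number of cells is unnecessary here, since the cells produced by the iteration are the $2^{k_0}$ sign-pattern regions $\{\operatorname{sign} Q_1 = \varepsilon_1,\dots,\operatorname{sign} Q_{k_0} = \varepsilon_{k_0}\}$, whose count is exactly $2^{k_0} \sim D^2$ by construction; they need not be connected, but (as the application in Section~\ref{sec:pp} requires) a line still meets at most $D+1$ of them because none of the $Q_k$ can change sign between consecutive zeros of $\mathcal{F}=\prod_k Q_k$.
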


Let us point out that actually the tubes are different from lines: it can intersect arbitrarily many cells.
To overcome this, we consider a ``strip'', which is a $R^{\frac{1}{2} + \delta}$-neighborhood of of the curve $Z(\mathcal{F})$.
Now we need to bound the $L^p$ norm on the strip as well.
In some cases the $L^p$ norm on the strip makes up a significant portion of the total $L^p$ in $B_R$.
For each tube, when intersecting the strip, it can be either parallel or transverse to the strip.
We analyze these situations separately.

The following theorem of Wongkew will be used to estimate the area of the strip.
\begin{thm} \cite{wongkew1993volumes}  \label{thm:pm:wv}
Let $\mathcal{F}$ be a non-zero polynomial of degree $D$ on $\R^2$,
and $\mathcal{N}$ the $\rho$-neighborhood of $Z(\mathcal{F})$.
For $B_R$ any ball of radius $R$,
there is
\begin{equation}
\Vol(B_R\bigcap \mathcal{N}) \lesssim D\rho R  .
\end{equation}
\end{thm}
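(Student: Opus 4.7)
My plan is to factor the estimate through an intermediate bound on the one-dimensional Hausdorff measure (arclength) of $Z(\mathcal{F})\cap B_R$, namely $\mathcal{H}^1(Z(\mathcal{F})\cap B_R) \lesssim DR$, and then convert this arclength bound into the desired area bound on the $\rho$-neighborhood via a covering argument. Before starting, I would dispose of the trivial regime $\rho \geq R/D$: there the naive estimate $\Vol(\mathcal{N}\cap B_R) \leq \Vol(B_R) = \pi R^2$ is already $\lesssim D\rho R$, so I may henceforth assume $\rho < R/D$.

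For the arclength bound I would invoke the Cauchy--Crofton formula from integral geometry. Parametrizing affine lines in $\R^2$ as $L_{\theta,s}$ (direction $\theta\in[0,\pi)$, signed distance $s\in\R$ from the origin), this formula reads
\[
\mathcal{H}^1\bigl(Z(\mathcal{F})\cap B_R\bigr) = \tfrac{1}{2}\int_0^\pi \int_{\R} \#\bigl(Z(\mathcal{F})\cap B_R\cap L_{\theta,s}\bigr)\,ds\,d\theta.
\]
For each line $L$, the restriction $\mathcal{F}|_L$ is a univariate polynomial of degree $\leq D$, hence either $L\subset Z(\mathcal{F})$ or $\#(Z(\mathcal{F})\cap L)\leq D$; the exceptional lines form a measure-zero subset of parameter space. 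Since $L_{\theta,s}\cap B_R$ is nonempty only when $|s|\leq R$, one obtains the uniform bound $\tfrac{1}{2}\cdot\pi\cdot 2R\cdot D = \pi DR$.

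To pass from arclength to the area of the neighborhood I would use the standard fact that any $1$-rectifiable set of length $L$ with $k$ connected components admits a cover by $\lesssim L/\rho + k$ closed balls of radius $\rho$ (chain each component along its length). Harnack's theorem for real plane algebraic curves gives that $Z(\mathcal{F})$ has $\lesssim D^2$ components, and a Bezout intersection with $\partial B_R$ controls the extra components produced by restriction to the ball, so $k\lesssim D^2$ in total. Inflating the cover by a factor of two then contains $\mathcal{N}\cap B_R$, producing
\[
\Vol(\mathcal{N}\cap B_R) \lesssim (DR/\rho + D^2)\rho^2 = DR\rho + D^2\rho^2 \lesssim DR\rho,
\]
where the final step uses $D\rho < R$.

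The main obstacle I anticipate is the treatment of degenerate pieces of $Z(\mathcal{F})$. Lines $L\subset Z(\mathcal{F})$ spoil the pointwise Bezout-type bound, but there are at most $D$ such lines, corresponding to linear factors of $\mathcal{F}$, and they can be peeled off and handled separately: each contributes a planar strip of area at most $4\rho R$ to $\mathcal{N}\cap B_R$, for a total of $\lesssim D\rho R$ absorbed into the main estimate. Singular points and isolated-point components of $Z(\mathcal{F})$ are absorbed into the $D^2$ component count via the Harnack/Milnor bounds, so after these accommodations the Cauchy--Crofton plus covering argument should yield the desired inequality.
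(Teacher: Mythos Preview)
The paper does not give its own proof of this theorem; it is simply quoted from Wongkew, who proves the analogous estimate in $\R^n$ for all $n\geq 2$. Your argument is a valid, self-contained proof in the planar case $n=2$. The Cauchy--Crofton step yielding $\mathcal{H}^1(Z(\mathcal{F})\cap B_R)\lesssim DR$ is standard, and your conversion from arclength to tube area via a covering argument together with the Harnack/Milnor bound $O(D^2)$ on the number of components is clean and elementary compared with Wongkew's general-dimension argument.

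One small point worth tidying: covering $Z(\mathcal{F})\cap B_R$ by $\rho$-balls and then doubling their radii captures the $\rho$-neighborhood of $Z(\mathcal{F})\cap B_R$, which is not quite $\mathcal{N}\cap B_R$, since a point of $B_R$ could lie within $\rho$ of a piece of $Z(\mathcal{F})$ sitting just outside $B_R$. Running the entire argument on $B_{2R}$ instead of $B_R$ repairs this at the cost of an absolute constant.
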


\begin{proof}[Proof of Proposition \ref{prop:pm}]
In the wave packet decomposition $f = \sum_{\theta, v} f_{\theta, v} = \sum_{\theta, v} c_{\theta, v} \phi_{\theta, v}$ of size $R$, denote $K(f)$ to be the number of non-zero $c_{\theta, v}$.
It suffices to prove the case where $K(f)$ is finite.
Indeed, by Corollary \ref{cor:wpd:bd},
there is
\begin{equation}
\int_{B_R} |Ef|^5 
\lesssim
\int_{B_R} \left| \sum_{\substack{\theta, v: RT_{\theta, v} \bigcap B_{R^2} \neq \emptyset} } Ef_{\theta, v} \right|^5
+
R^{-10} \| f \|_{L^2}^5 .
\end{equation}
The first term has only finitely many nonzero wave packets, while the second is obviously bounded.

Now we do induction on both the radius $R$ and $K(f)$.

In the case where $K(f) = 1$, 
we have $S = 1$, and (\ref{eq:mainieq}) follows Theorem \ref{thm:str}.

Now consider finite $K(f)$.
Denote $\chi_{B_R}$ to be the indicator function of $B_R$.
For any positive integer $D$,
by applying Theorem \ref{thm:pp} to $\chi_{B_R} |f|^5$,
there exists a non-zero polynomial $\mathcal{F}_D$, of degree at most $D$,
and $\R^2 \backslash Z(\mathcal{F}_D)$ is a union of $\sim D^2$ disjoint cells $\tilde{O}_i$,
such that all $\int_{\tilde{O}_i \bigcap B_R} |f|^5$ equal.

Let $\delta = \frac{\epsilon}{20}$ .
Denote $\mathcal{N}$ to be the $R^{\frac{1}{2} + \delta}$-neighborhood of $Z(\mathcal{F}_D)$ inside $B_R$,
and $O_i = \tilde{O}_i \bigcap B_R \backslash \mathcal{N}$.
Then there is
\begin{equation}
\| Ef \|_{L^5(B_R)}^5  =  \| Ef \|_{L^5(\mathcal{N})}^5 + \sum_{{O}_i}\| Ef \|_{L^5({O}_i)}^5  .
\end{equation}

Now consider the following two situations
\begin{case} $\sum_{{O}_i} \| Ef \|_{L^5({O}_i)}^5 > \frac{3}{4} \| Ef \|_{L^5(B_R)}^5 = \frac{3}{4} \sum_{\tilde{O}_i} \| Ef \|_{L^5(\tilde{O}_i)}^5 $.

Then for half of all ${O}_i$, there is
\begin{equation}
\| Ef \|_{L^5({O}_i)}^5 \geq \frac{1}{2} \| Ef \|_{L^5(\tilde{O}_i)}^5 \geq C_1 D^{-2} \| Ef \|_{L^5(B_R)}^5  ,
\end{equation}
for some constant $C_1$.



For each $\theta, v$, let $\mathcal{L}_{\theta, v}$ be the line $x = v + \theta t$;
then $R^{1 + 2\delta}T_{\theta, v}$ is the $R^{\frac{1}{2} + \delta}$ neighborhood of $\mathcal{L}_{\theta, v}$.
For any cell $O_i$,
if $R^{1 + 2\delta}T_{\theta, v} \bigcap {O}_i \neq \emptyset$,
we must have  $\mathcal{L}_{\theta, v} \bigcap \tilde{O}_i \neq \emptyset$.
However, since $\mathcal{F}_D$ is of degree $D$,
$\mathcal{L}_{\theta, v}$ has at most $D$ intersections with the curve $Z(\mathcal{L}_{\theta, v})$;
thus $R^{1 + 2\delta}T_{\theta, v}$ intersects at most $D$ of all ${O}_i$.
In particular, $\lesssim D$ of all ${O}_i$ intersect each of $R^{1 + 2\delta}T_{\theta, v}$.

Also, by (\ref{eq:wpd:l2}),
there is
\begin{equation}
\sum_{{O}_i} \| f_{{O}_i} \|_{L^2}^2
\sim \sum_{{O}_i} \sum_{\substack{\theta, v \\ R^{1 + 2\delta}T_{\theta, v} \bigcap {O}_i \neq \emptyset }} | c_{\theta, v} |^2  
\leq D \sum_{\theta, v} \| c_{\theta, v} |^2
\sim D \| f \|_{L^2}^2  ,
\end{equation}
then more than $\frac{3}{4}$ of all ${O}_i$ satisfy that
\begin{equation}
\| f_{{O}_i} \|_{L^2}^2 \leq C_2 D^{-1} \| f \|_{L^2}^2   ,
\end{equation}
for some constant $C_2$.

Now we can find a cell ${O}_{i_o}$, which does not intersect every $R^{1 + 2\delta}T_{\theta, v}$,
and satisfies
\begin{equation}
\| Ef \|_{L^5({O}_{i_o})}^5 \geq C_1D^{-2} \| Ef \|_{L^5(B_R)}^5  ,
\quad
\| f_{{O}_{i_o}} \|_{L^2}^2 \leq C_2 D^{-1} \| f \|_{L^2}^2   .
\end{equation}

We denote
\begin{equation}
f_{{O}_{i_o}} = \sum_{\theta, v :\; R^{1 + 2\delta}T_{\theta, v} \bigcap {O}_{i_o} \neq \emptyset } f_{\theta, v} .
\end{equation}
Then there is 
\begin{equation}
\int_{O_{i_o} } |Ef|^5 = \int_{O_{i_o} } \left| \sum_{\theta, v} Ef_{\theta, v} \right|^5
\lesssim \int_{O_{i_o} } |Ef_{O_{i_o} }|^5 +
\int_{O_{i_o} } |E(f - f_{O_{i_o} })|^5
 .
\end{equation}
Since $f - f_{O_{i_o} }$ is the sum of all $f_{\theta, v}$ where
each $R^{1 + 2\delta}T_{\theta, v} \bigcap {O}_{i_o} = \emptyset$,
by Corollary \ref{cor:wpd:bd} there is
\begin{equation}
\| Ef \|_{L^5(O_{i_o} )} = \left( \int_{O_{i_o} } |Ef|^5 \right)^{\frac{1}{5}}
\lesssim 
C_{\delta}
\| Ef_{{O}_{i_o} } \|_{L^5(B_R)}  + R^{-2} \| f \|_{L^2},
\end{equation}
where $C_{\delta}$ is a constant relying on $\delta$.

Using the induction hypothesis for $f_{{O}_{i_o}}$, 
we obtain
\begin{equation}
\begin{split}
& \| Ef \|_{L^5(B_R)} \leq C_1^{-\frac{1}{5}} D^{\frac{2}{5}} \| Ef \|_{L^5({O}_{i_o})}  \\
\leq &
C_1^{-\frac{1}{5}} D^{\frac{2}{5}} \left(
C_{\delta}
\| Ef_{{O}_{i_o}} \|_{L^5(B_R)}  + R^{-2} \| f \|_{L^2} \right) \\
\leq & 
C_1^{-\frac{1}{5}} D^{\frac{2}{5}} \left(
C_{\delta}
C\left(\frac{1}{20}, \frac{1}{5}, 5, \epsilon\right)R^{\frac{1}{20} + \epsilon} M^{\frac{1}{5} - \epsilon} \| f_{{O}_{i_o}} \|_{L^2}^{\frac{4}{5} + \epsilon}  + R^{-2} \| f \|_{L^2} \right) \\
\leq &
C_1^{-\frac{1}{5}} 
C_2^{\frac{2}{5} + \frac{\epsilon}{2}}
C_{\delta}
C\left(\frac{1}{20}, \frac{1}{5}, 5, \epsilon\right)R^{\frac{1}{20} + \epsilon} M^{\frac{1}{5} - \epsilon} 
D^{ - \frac{\epsilon}{2}} \| f \|_{L^2}^{\frac{4}{5} + \epsilon}
+ 
C_1^{-\frac{1}{5}} D^{\frac{2}{5}} R^{-2} \| f \|_{L^2} 
.
\end{split}
\end{equation}
Now one can take $D$ large enough (relying on $\epsilon$, $\delta$ and $C_o$ ),
such that when $R$ large enough (relying on $D$),
there is
\begin{equation}
\| Ef \|_{L^5(B_R)} \leq C\left(\frac{1}{20}, \frac{1}{5}, \epsilon, 5\right)R^{\frac{1}{20} + \epsilon} M^{\frac{1}{5} - \epsilon} \| f \|_{L^2}^{\frac{4}{5} + \epsilon} 
=
C\left(\frac{1}{20}, \frac{1}{5}, 5, \epsilon\right)R^{\frac{1}{20} + \epsilon} S^{\frac{1}{5} - \epsilon} \| f \|_{L^2}
 .
\end{equation}

\end{case}

\begin{case} $\| Ef \|_{L^5(\mathcal{N})}^5 > \frac{1}{4} \| Ef \|_{L^5(B_R)}^5 $.

For any $\theta, v$,
consider the tube $2R^{\frac{1}{2} + \delta}T_{\theta, v}$.
The boundary of the tube consists of two lines, and the curve $Z(\mathcal{F}_D$ can cross it $\lesssim D$ times;
and also $Z(\mathcal{F}_D$ can cross the boundary of $B_R$ for $\lesssim D$ times.
This means that $2R^{\frac{1}{2} + \delta}T_{\theta, v} \bigcap Z(\mathcal{F}_D) \bigcap B_r$ consists of $\lesssim D$ segments.
Denote them as $Z_1, \cdots, Z_r$; and for each $1 \leq i \leq r$, denote $\mathcal{N}_i$ to be the $R^{\frac{1}{2} + \delta}$-neighborhood of $Z_i$.


For any point $a \in R^{\frac{1}{2} + \delta} T_{\theta, v} \bigcap \mathcal{N}$,
there is $b \in Z(\mathcal{F}_D)$,
such that $|a - b| \leq R^{\frac{1}{2} + \delta}$.
This means that $b \in 2R^{\frac{1}{2} + \delta}T_{\theta, v}$,
$b \in Z_i$ for some $1 \leq i \leq r$,
and then $a \in \mathcal{N}_i$.
Thus there is
\begin{equation}  \label{eq:pp:pf2}
R^{\frac{1}{2} + \delta} T_{\theta, v} \bigcap \mathcal{N} \subset \bigcup_{1\leq i \leq r} \mathcal{N}_i  .
\end{equation}

Let $G$ be a constant (to be determined later).
For any $\theta, v$
and a corresponding set $\mathcal{N}_i$,
if $\mathcal{N}_i$ has diameter $> \frac{R}{G}$,
we say that $\mathcal{N}_i$ is \emph{tangent} to $\mathcal{N}$;
otherwise it is \emph{transverse} to $\mathcal{N}$.
Let $\Omega_{\theta, v, tang}$ be the collection of all tangent sets $\mathcal{N}_i$,
and $\Omega_{\theta, v, trans}$ be the collection of all transverse sets $\mathcal{N}_i$.

Now divide $B_R$ into $\sim G^2$  squares, each of size $\frac{R}{G} \times \frac{R}{G}$,
and denote the collection of these squares as $\mathbb{S}$.
For each $Q \in \mathbb{S}$, define $f_{trans}, f_{non} : [0, 1] \rightarrow \C$ as following:
\begin{equation}
f_{Q, trans} = \sum_{\substack{\theta, v : \\ \exists \mathcal{N}_i \in \Omega_{\theta, v, trans}, \\ \mathcal{N}_i \bigcap Q \neq \emptyset}} f_{\theta, v}   , \quad
f_{Q, non} = \sum_{\substack{\theta, v : \\ \forall \mathcal{N}_i \in \Omega_{\theta, v, trans}, \\ \mathcal{N}_i \bigcap Q = \emptyset}} f_{\theta, v}   .
\end{equation}
Obviously, for each $Q$ there is $f = f_{Q, trans} + f_{Q, non}$, and
\begin{equation}
\| Ef \|_{L^5(\mathcal{N})}^5
\lesssim 
\sum_{Q \in \mathbb{S}} \| Ef_{Q, trans} \|_{L^5(\mathcal{N} \bigcap Q)}^5 +
\sum_{Q \in \mathbb{S}} \| Ef_{Q, non} \|_{L^5(\mathcal{N} \bigcap Q)}^5  .
\end{equation}

First consider $\sum_{Q \in \mathbb{S}} \| Ef_{Q, trans} \|_{L^5(\mathcal{N} \bigcap Q)}^5 \leq \sum_Q \| Ef_{Q, trans} \|_{L^5(Q)}^5$.
We can cover any $Q$ by a ball of radius $\frac{R}{G}$.
By Proposition \ref{lemma:wpd:cs} we can choose a wave packet composition of size $R/G$, and 
use the induction hypothesis for the smaller ball.
Then we have
\begin{equation}
\| Ef_{Q, trans} \|_{L^5(Q)}^5 \leq
C\left(\frac{1}{20}, \frac{1}{5}, 5, \epsilon \right)^5 \left(\frac{R}{G}\right)^{\frac{1}{4} + 5 \epsilon } 
\left(M G^{\frac{1}{4}}\right)^{1 - 5\epsilon} \| f_{Q, trans} \|_{L^2}^{4 + 5\epsilon}.
\end{equation}

For each $\theta, v$, $|\Omega_{\theta, v, trans}| \lesssim D$,
and each $\mathcal{N}_i \in \Omega_{\theta, v, trans}$ can be covered by $\sim 1$ squares.
Then by Proposition \ref{prop:wpd}
there is
\begin{equation}
\sum_{Q} \| f_{Q, trans} \|_{L^2}^2
\lesssim \sum_{Q} \sum_{\substack{\theta, v  \\ \exists \mathcal{N}_i \in \Omega_{\theta, v, trans}, \mathcal{N}_i \bigcap Q \neq \emptyset}} | c_{\theta, v} |^2   
\lesssim D \sum_{\theta, v} | c_{\theta, v} |^2
\sim D \left\| f \right\|_{L^2}^2  ,
\end{equation}
then $\sum_Q \| f_{Q, trans} \|_{L^2}^{4 + 5\epsilon} \lesssim D^{2 + \frac{5}{2}\epsilon} \left\| f \right\|_{L^2}^{4 + 5 \epsilon}$.

By taking $G$ large enough (relying on $\epsilon, \delta, D$),
we have
\begin{equation}   \label{eq:pp:pf5}
\| Ef_{Q, trans} \|_{L^5(B_R)}^5
=
\sum_Q \| Ef_{Q, trans} \|_{L^5(Q)}^5
\leq
\frac{1}{8}
C\left(\frac{1}{20}, \frac{1}{5}, 5, \epsilon \right)^5 R^{\frac{1}{4} + 5 \epsilon } 
M^{1 - 5\epsilon} \| f \|_{L^2}^{4 + 5\epsilon}.
\end{equation}

Now consider 
$\sum_Q \| Ef_{Q, non} \|_{L^5(\mathcal{N}\bigcap Q)}^5$.
Define $g_{tang}, g_{o} : B_R \rightarrow \C$ as following:
for any $Q \in \mathbb{S}$ and point $q \in Q$, let
\begin{equation}
g_{tang}(q) = \sum_{\substack{\theta, v : \\ \forall \mathcal{N}_i \in \Omega_{\theta, v, trans},  \mathcal{N}_i \bigcap Q = \emptyset \\ \exists \mathcal{N}_j \in \Omega_{\theta, v, tang},  q \in \mathcal{N}_j  }} Ef_{\theta, v}(q)   , \quad
g_{o}(q) = \sum_{\substack{\theta, v : \\ \forall \mathcal{N}_i \in \Omega_{\theta, v, trans}, \mathcal{N}_i \bigcap Q = \emptyset \\ \forall \mathcal{N}_j \in \Omega_{\theta, v, tang},  q \not\in \mathcal{N}_j  }} Ef_{\theta, v}(q)   .
\end{equation}
Then for any $q \in Q$, $Ef_{Q, non}(q) = g_{tang}(q) + g_{o}(q)$, and we have
\begin{equation}
\sum_Q \| Ef_{Q, non} \|_{L^5(\mathcal{N}\bigcap Q)}^5 \lesssim \| g_{tang} \|_{L^5(\mathcal{N})}^5 + \| g_{o} \|_{L^5(\mathcal{N})}^5.
\end{equation}
For any $q \in \mathcal{N}$, if $q$ is not in any $\mathcal{N}_i \in \Omega_{\theta, v, trans} \bigcup \Omega_{\theta, v, tang}$, from (\ref{eq:pp:pf2}), $q \not\in R^{\frac{1}+\delta} T_{\theta, v}$.
From Corollary \ref{cor:wpd:bd}, this implies that
$
|g_o (q)|^5 \lesssim_{\delta} R^{-100} \| f \|_{L^2}^5 
$
and 
\begin{equation} \label{eq:pp:pf6}
\| g_{o} \|_{L^5(\mathcal{N})}^5 \lesssim_{\delta}
R^{-50} \| f \|_{L^2}^5 .
\end{equation}

For $\| g_{tang} \|_{L^5(\mathcal{N})}^5$, we show that for each point $q \in \mathcal{N}$,
it is contained in $\lesssim D^2 R^{2\delta}$ sets in $\bigcup_{\theta, v} \Omega_{\theta, v, tang}$.

Suppose that there are $W$ sets in $\bigcup_{\theta, v} \Omega_{\theta, v, tang}$ that contains $q$.
For any $\theta, v$ and $\mathcal{N}_i \in \Omega_{\theta, v, tang}$, $q \in \mathcal{N}_i$,
there are two points $a, b \in \mathcal{N}_i$ with $|a - b| \geq R/G$.
Then there is $c \in \{a, b\}$ such that $|c - q| \geq R/2G$.
This implies that there are points $c', q' \in Z(\mathcal{F}_D)$, such that the segment of $Z(\mathcal{F}_D)$ between $c', q'$ is in $2R^{\frac{1}{2} + \delta}T_{\theta, v}$, and $|c - c'|, |q - q'| \leq R^{\frac{1}{2} + \delta}$.
Let $P_{\theta, v, i}$ be the segment of $Z(\mathcal{F}_D)$ between $c'$ and $q'$.

Denote $Y_1$ and $Y_2$ to be the circles centered at $q$ with radius $R/3G$ and $R^{\frac{1}{2} + \delta}$.
Then each $P_{\theta, v, i}$ intersects both $Y_1$ and $Y_2$.
As $Z(\mathcal{F}_D)$ intersects either $Y_1$ and $Y_2$ at $\lesssim D$ points, there are 
$\gtrsim W/D^2$ different $P_{\theta, v, i}$, that intersect $Y_1$ and $Y_2$ at the same point, respectively.
Let the two points be $y_1 \in Y_1$ and $y_2 \in Y_2$, and the segments be
$P_{\theta_1, v_1, i_1}, \cdots, P_{\theta_k, v_k, i_k}$, for $k \gtrsim W/D^2$.
Since for any fixed $\theta, v$ and $i \neq j$, $P_{\theta, v, i}$ does not overlap with $P_{\theta, v, j}$,
the pairs $(\theta_1, v_1), \cdots, (\theta_k, v_k)$ are mutually different.
Now there are $k$ tubes $R^{\frac{1}{2} + \delta}T_{\theta_1, v_1}, \cdots, R^{\frac{1}{2} + \delta}T_{\theta_k, v_k}$, each containing $y_1, y_2$.
Note that as $|y_1 - y_2| \sim R/G$, for any $R^{\frac{1}{2} + \delta}T_{\theta, v}$ containing both of them, $\theta$ can take only $GR^{\delta}$ values, and $v$ can take only $R^{\delta}$ values.
Then we have that $W/D^2 \lesssim k \leq GR^{2\delta}$, and $W \lesssim D^2 GR^{2\delta}$.

By Theorem \ref{thm:pm:wv}, we conclude that
\begin{equation}   \label{eq:pp:pf9}
\| g_{tang} \|_{L^5(\mathcal{N})}^5 = \int_{\mathcal{N}} |g_{tang}|^5
\lesssim (D^2 GR^{2\delta})^4 \int_{\mathcal{N}} \sum_{\theta, v}  R^{-\frac{5}{4}}|c_{\theta, v}|^5
\lesssim
(D^2 G)^4  R^{\frac{1}{4} + 9\delta} M \| f \|_{L^2}^4 .
\end{equation}
Finally, add (\ref{eq:pp:pf6}) and (\ref{eq:pp:pf9}) together, and 
take $C\left(\frac{1}{20}, \frac{1}{5}, 5, \epsilon\right)$,
relying on $G, D, \epsilon$,
such that when $R$ large enough. 
Then there is
\begin{equation}   \label{eq:pp:pf15}
\sum_{Q\in \mathbb{S}}\| Ef_{Q, non} \|_{L^5(\mathcal{N}\bigcap Q)}^5
\leq
\frac{1}{8}
C\left(\frac{1}{20}, \frac{1}{5}, 5, \epsilon \right)^5 R^{\frac{1}{4} + 5 \epsilon } 
M^{1 - 5\epsilon} \| f \|_{L^2}^{4 + 5\epsilon} ,
\end{equation}
and
\begin{multline}
\| Ef \|_{L^5(B_R)}^5 \leq 4\| Ef \|_{L^p(\mathcal{N})}^5 \leq 
4\sum_{Q\in \mathbb{S}}\| Ef_{Q, trans} \|_{L^5(\mathcal{N}\bigcap Q)}^5
+ \| Ef_{Q, non} \|_{L^5(\mathcal{N}\bigcap Q)}^5
\\
\leq
C\left(\frac{1}{20}, \frac{1}{5}, 5, \epsilon \right)^5 R^{\frac{1}{4} + 5 \epsilon } 
M^{1 - 5\epsilon} \| f \|_{L^2}^{4 + 5\epsilon} ,
\end{multline}
and taking $\frac{1}{5}$th power in both sides finishes the proof.
\end{case}
\end{proof}

\bigskip
\section{Using $l^2$ Decoupling Theorem: Proof for $p=6$}  \label{sec:dc}

Now we directly apply the $l^2$ Decoupling Theorem to give another bound for $p = 6$.
The theorem was proved in recent years by Bourgain and Demeter for compact surfaces with positive definite second fundamental form.
Some more explanations on the ideas in the proof can be found at \cite{bourgain2016study}.

In $1+1$ dimension and parabola, it can be stated as follows.
\begin{thm} \cite[Theorem 1.1]{MR3374964} \label{thm:dec}
For $\delta > 0$,
let $\mathbb{S}$ be a set of segments of $P$ such that
finitely covers $P$,
and each segment in $\mathbb{S}$ is of length $\sim \delta^{\frac{1}{2}}$.
For any smooth function $g : \R^2\rightarrow \C$,
if $\widehat{g}$ is supported in the $\delta$-neighborhood of $P$,
and we write
\begin{equation}
g = \sum_{\iota \in \mathbb{S}} g_{\iota} ,
\end{equation}
where each $\widehat{g}_{\iota}$ is supported in the
$\delta$-neighborhood of $\iota$ ;
then for any $\epsilon > 0$, there is
\begin{equation}
\| g \|_{L^6} \lesssim_{\epsilon} \delta^{-\epsilon} \left( \sum_{\iota \in \mathbb{S}}
\| g_{\iota} \|_{L^6}^2 \right)^{\frac{1}{2}}  .
\end{equation}
\end{thm}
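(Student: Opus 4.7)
The plan is to adapt the Bourgain--Demeter bootstrap to our two-dimensional setting. Let $D(\delta)$ denote the best constant for which
\begin{equation}
\|g\|_{L^6(\R^2)} \le D(\delta)\Bigl(\sum_{\iota\in\mathbb{S}}\|g_\iota\|_{L^6}^2\Bigr)^{1/2}
\end{equation}
holds uniformly in $g$ with $\widehat g$ supported in the $\delta$-neighbourhood of $P$. The goal is to show $D(\delta)\lesssim_\epsilon \delta^{-\epsilon}$ by induction on the number of dyadic scales between $\delta$ and $1$.

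First I would establish a bilinear decoupling for two caps $\iota_1,\iota_2\subset P$ of length $\delta^{1/2}$ with angular separation of order $1$, of the form
\begin{equation}
\|g_{\iota_1}g_{\iota_2}\|_{L^3(B_{\delta^{-1}})}^{1/2} \lesssim \Bigl(\sum_{\iota}\|g_\iota\|_{L^6(w_B)}^2\Bigr)^{1/2},
\end{equation}
where $w_B$ is a rapidly decaying weight adapted to a ball of radius $\delta^{-1}$. In dimension two the geometric input is elementary: the Minkowski sum of the $\delta$-neighbourhoods of $\iota_1$ and $\iota_2$ has area $\sim\delta$ with bounded multiplicity, and two transverse $\delta^{-1/2}\times\delta^{-1}$ wave packets from separated caps meet in parallelograms of area $\sim\delta^{-1}$, so Plancherel combined with Cauchy--Schwarz yields the bilinear estimate without any $\delta$-loss.

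Next I would run the broad--narrow dichotomy at a coarser scale $K^{-1}\gg\delta^{1/2}$: cover $P$ by caps $\tau$ of length $K^{-1}$, write $g=\sum_\tau g_\tau$, and at each point distinguish the \emph{broad} region, where two $K^{-1}$-separated caps simultaneously dominate $|g|$, from the \emph{narrow} region, where a single $\tau$ dominates. The broad part is controlled by the bilinear estimate above applied at scale $K^{-1}$, combined with ``flat'' (trivial) decoupling to pass from caps of length $K^{-1}$ down to length $\delta^{1/2}$. On the narrow region one exploits parabolic rescaling: the decoupling problem inside a cap of length $K^{-1}$ at scale $\delta$ is affinely equivalent to the full decoupling problem at scale $K^2\delta$, which the induction hypothesis handles.

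Putting the pieces together yields a recursion of the shape $D(\delta)\le C_\epsilon K^{O(1)}+C_K\,D(K^2\delta)$; iterating $\sim\log(1/\delta)/\log K$ times and choosing $K=K(\epsilon)$ large gives $D(\delta)\lesssim_\epsilon \delta^{-\epsilon}$. The main obstacle is closing this recursion \emph{cheaply}: each narrow step must lose only factors of $K$, never powers of $\delta$, so that the accumulated losses over all scales remain sub-polynomial. This scale book-keeping is more transparent for the parabola in $\R^2$ than in higher dimensions, since bilinear Kakeya reduces to a direct area computation and parabolic rescaling acts in a completely explicit way, but the delicate balancing of exponents at the broad/narrow split remains the core of the argument.
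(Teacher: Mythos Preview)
The paper does not prove this theorem at all: it is quoted verbatim from Bourgain--Demeter \cite[Theorem 1.1]{MR3374964} and used as a black box in the proof of Proposition~\ref{prop:dec}. So there is no ``paper's own proof'' to compare against; your proposal is in effect a sketch of the original Bourgain--Demeter argument rather than of anything in this paper.

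That said, your sketch has a genuine gap at the iteration step. The recursion you write,
\[
D(\delta)\le C_\epsilon K^{O(1)}+C_K\,D(K^2\delta),
\]
does not close to $D(\delta)\lesssim_\epsilon\delta^{-\epsilon}$. Iterating it $N\sim\log(1/\delta)/(2\log K)$ times produces a factor $C_K^{N}$, and since the narrow step in a plain broad--narrow argument loses at least a factor $K$ (from the pointwise bound $|g|\le K\max_\tau|g_\tau|$), one has $C_K\gtrsim K$ and hence $C_K^N\gtrsim\delta^{-1/2}$. This is exactly why the critical exponent $p=6$ is hard: a naive broad--narrow bootstrap only yields subcritical decoupling. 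The actual Bourgain--Demeter proof avoids this by a more elaborate multiscale iteration combining multilinear Kakeya with ball inflation (or, in later treatments, Wooley-style efficient congruencing or the bilinear-to-linear bootstrap of Li), so that the losses at each scale are genuinely $O_\epsilon(1)$ rather than $K^{O(1)}$. Your identification of the ingredients (bilinear estimate, parabolic rescaling, broad--narrow) is correct, but the bookkeeping that makes the induction close is precisely the nontrivial content of \cite{MR3374964}, and it is not captured by the recursion you wrote.
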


We directly use Theorem \ref{thm:dec}
to obtain another sharp bound for $p=6$.

\begin{prop}  \label{prop:dec}
For $p = 6$,
(\ref{eq:mainieq}) holds for $\alpha = \frac{1}{6}$ and $\beta = \frac{2}{3}$  .
\end{prop}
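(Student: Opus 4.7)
The plan is to apply the $l^2$ Decoupling Theorem at scale $\delta = R^{-1}$, reducing the $L^6$ bound for $Ef$ on $B_R$ to an $\ell^2$ sum over the angular wave-packet blocks $f_\theta = \sum_v f_{\theta,v}$, and then to control each $\|Ef_\theta\|_{L^6(B_R)}$ by interpolation between $L^\infty$ and $L^2$ bounds which are easy to read off from the wave-packet decomposition.

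First I would pass to a globally defined function suited to Theorem \ref{thm:dec}. Choose a Schwartz $\eta$ with $\eta \sim 1$ on $B_R$ and with $\widehat{\eta}$ supported in a ball of radius $c R^{-1}$. Since $\widehat{Ef} = f^*\diff\sigma$ is supported on $P$, the function $\eta Ef$ has Fourier transform supported in the $R^{-1}$-neighborhood of $P$, and each $\eta Ef_\theta$ has Fourier transform supported in the $R^{-1}$-neighborhood of a segment of $P$ of length $\sim R^{-1/2}$. Theorem \ref{thm:dec} with $\delta = R^{-1}$ then gives
\begin{equation}
\|Ef\|_{L^6(B_R)} \lesssim \|\eta Ef\|_{L^6(\R^2)} \lesssim_\epsilon R^\epsilon \Bigl(\sum_\theta \|\eta Ef_\theta\|_{L^6(\R^2)}^2\Bigr)^{1/2}.
\end{equation}
By the rapid decay of $\eta$, the right-hand side is (up to acceptable error) $\bigl(\sum_\theta \|Ef_\theta\|_{L^6(B_R)}^2\bigr)^{1/2}$ plus a tail that is controlled by using the bound (\ref{eq:sf:bd}) on $\eta_R$ together with a standard weighted-norm argument.

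Next I bound each $\|Ef_\theta\|_{L^6(B_R)}$ by interpolation. Since the tubes $\{RT_{\theta,v}\}_v$ are parallel and essentially disjoint, the wave-packet estimate (\ref{eq:wpd:bd}) gives the pointwise bound
\begin{equation}
\|Ef_\theta\|_{L^\infty(B_R)} \lesssim R^{-1/4} \max_v |c_{\theta,v}| \leq R^{-1/4} M,
\end{equation}
where $M = S\|f\|_{L^2}$. The $L^2$ bound (\ref{eq:ep2}) gives $\|Ef_\theta\|_{L^2(B_R)}^2 \lesssim R\|f_\theta\|_{L^2}^2$. Writing $|Ef_\theta|^6 \leq \|Ef_\theta\|_{L^\infty(B_R)}^4 |Ef_\theta|^2$, I get
\begin{equation}
\|Ef_\theta\|_{L^6(B_R)}^6 \lesssim R^{-1} M^4 \cdot R \|f_\theta\|_{L^2}^2 = M^4 \|f_\theta\|_{L^2}^2,
\end{equation}
hence $\|Ef_\theta\|_{L^6(B_R)}^2 \lesssim M^{4/3}\|f_\theta\|_{L^2}^{2/3}$.

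Finally I sum over the $\sim R^{1/2}$ possible values of $\theta$ using Hölder's inequality with exponents $3$ and $3/2$:
\begin{equation}
\sum_\theta \|f_\theta\|_{L^2}^{2/3} \leq (\#\theta)^{2/3} \Bigl(\sum_\theta \|f_\theta\|_{L^2}^2\Bigr)^{1/3} \lesssim R^{1/3}\|f\|_{L^2}^{2/3}.
\end{equation}
Combining everything,
\begin{equation}
\|Ef\|_{L^6(B_R)} \lesssim_\epsilon R^\epsilon \bigl(M^{4/3} R^{1/3}\|f\|_{L^2}^{2/3}\bigr)^{1/2} = R^{1/6+\epsilon} M^{2/3} \|f\|_{L^2}^{1/3} = R^{1/6+\epsilon} S^{2/3}\|f\|_{L^2},
\end{equation}
and since $S \leq 1$ we have $S^{2/3} \leq S^{2/3-\epsilon}$, which gives (\ref{eq:mainieq}) with $\alpha = 1/6$, $\beta = 2/3$. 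The only technical subtlety will be justifying the reduction from $\|Ef\|_{L^6(B_R)}$ to a quantity to which Theorem \ref{thm:dec} applies verbatim, together with passing from $\|\eta Ef_\theta\|_{L^6(\R^2)}$ to $\|Ef_\theta\|_{L^6(B_R)}$; this is a routine weighted-norm exercise using (\ref{eq:sf:bd}) and (\ref{eq:wpd:bd}) and can absorbed into an $R^\epsilon$ loss.
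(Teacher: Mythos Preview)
Your proof is correct and follows essentially the same approach as the paper: both apply the $l^2$ Decoupling Theorem to $\eta_R Ef$ at scale $R^{-1}$, bound each $\|Ef_\theta \cdot \eta_R\|_{L^6}^6$ by $M^4\|f_\theta\|_{L^2}^2$, and then sum over the $\sim R^{1/2}$ values of $\theta$ via H\"older. The only cosmetic difference is that the paper obtains the single-$\theta$ bound by an explicit tube-by-tube computation using (\ref{eq:wpd:bd}) and (\ref{eq:sf:bd}), whereas you phrase the same estimate as $L^\infty$--$L^2$ interpolation; note also that it is cleaner to bound $\|\eta Ef_\theta\|_{L^6(\R^2)}$ directly (as the paper does) rather than trying to pass back to $\|Ef_\theta\|_{L^6(B_R)}$, since your pointwise and weighted-$L^2$ bounds apply equally well to $\eta Ef_\theta$.
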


\begin{proof}
Consider $Ef \cdot \eta_R = \sum_{\theta} Ef_{\theta} \cdot \eta_R$: 
each $\widehat{Ef_{\theta} \cdot \eta_R} = \widehat{Ef_{\theta}} \ast \widehat{\eta_R} = \widehat{f}_{\theta} \ast \widehat{\eta_R}$, and is supported in the $O(R^{-1})$ neighborhood of a segment of $P$, where the segment is the $\sim R^{-\frac{1}{2}}$ neighborhood of $(\theta, \theta^2)$ on $P$.
Also $\widehat{Ef\cdot \eta_R} = \widehat{Ef_{\theta}} \ast \widehat{\eta_R}$  is supported in the $O(R^{-1})$ neighborhood of $P$.
By Theorem \ref{thm:dec} we conclude that 
\begin{equation}  
\| Ef \|_{L^6(B_R)}
\lesssim \| Ef \cdot \eta_R \|_{L^6}
\lesssim_{\epsilon} R^{\epsilon} \left( \sum_{\theta}
\| Ef_{\theta} \cdot \eta_R \|_{L^6}^2 \right)^{\frac{1}{2}}  .
\end{equation}
For each $\theta$, there is
\begin{multline}
\| Ef_{\theta} \cdot \eta_R \|_{L^6}^6 = \int \left| Ef_{\theta} \cdot \eta_R \right|^6 
\leq \sum_{u \in R^{\frac{1}{2}}\Z } \int_{RT_{\theta, u}} \left| Ef_{\theta} \cdot \eta_R \right|^6 
\\
\lesssim \sum_{u,v \in R^{\frac{1}{2}}\Z} \int_{RT_{\theta, u}} \left| Ef_{\theta, v} \cdot \eta_R \right|^6 (1 + |u-v| R^{-\frac{1}{2}} )^6 .
\end{multline}
In (\ref{eq:wpd:bd}) we take $N = 100$, and get
\begin{multline}  \label{eq:pl2:pf1}
\| Ef_{\theta} \cdot \eta_R \|_{L^6}^6
\lesssim
\sum_{u,v \in R^{\frac{1}{2}}\Z} R^{-\frac{3}{2}}(1 + |u-v| R^{-\frac{1}{2}} )^6 \int_{RT_{\theta, u}} \left| \left(1 + |x - v - \theta t| R^{-\frac{1}{2}}\right)^{-100} \eta_R c_{\theta, v} \right|^6 
\\
\lesssim
\sum_{u,v \in R^{\frac{1}{2}}\Z} R^{-\frac{3}{2}}(1 + |u-v| R^{-\frac{1}{2}} )^{-2} \int_{RT_{\theta, u}} \left| \eta_R c_{\theta, v} \right|^6 .
\end{multline}
Use (\ref{eq:sf:bd}), taking $N = 10$, and (\ref{eq:pl2:pf1}) is bounded by
\begin{equation}
\sum_{u,v \in R^{\frac{1}{2}}\Z} R^{-\frac{3}{2}}(1 + |u-v| R^{-\frac{1}{2}} )^{-2} \int_{RT_{\theta, u}} \left| c_{\theta, v} \right|^6 (1 + |t|/ R)^{-10}
\lesssim
\sum_{u,v \in R^{\frac{1}{2}}\Z} (1 + |u-v| R^{-\frac{1}{2}} )^{-2} \left| c_{\theta, v} \right|^6 ,
\end{equation}
and is bounded by $\sum_{v \in R^{\frac{1}{2}}\Z} \left| c_{\theta, v} \right|^6 \leq M^4\sum_{v \in R^{\frac{1}{2}}\Z} \left| c_{\theta, v} \right|^2$, by summing over $u$.

Finally, there is
\begin{equation}
\| Ef \|_{L^6(B_R)} \lesssim_{\epsilon} R^{\epsilon } M^{\frac{2}{3}}
\left( \sum_{\theta} \left( \sum_{v} |c_{\theta, v}|^2 \right)^{\frac{1}{3}} \right)^{\frac{1}{2}}
\lesssim
R^{\epsilon } M^{\frac{2}{3}}
\left( R^{\frac{1}{3}} \left( \sum_{\theta, v} |c_{\theta, v}|^2 \right)^{\frac{1}{3}} \right)^{\frac{1}{2}},
\end{equation}
and it is $
\sim R^{\frac{1}{6} + \epsilon} S^{\frac{2}{3}}  \| f \|_{L^2}$.
\end{proof}

\bigskip
\section{Examples}  \label{sec:em}

In this section we give examples, which provide necessary conditions for (\ref{eq:mainieq}) to hold.

The example of Many Wave Packets is constructed to make $S$ arbitrarily small, showing that there must be $\beta \leq 1$ for (\ref{eq:mainieq}) to hold.
The next two scenarios are constructed to maximize the left hand side of (\ref{eq:mainieq}), by overlapping wave packets.
In the Bundle Scenario, we take a large area inside $B_R$ with equal number of overlaps inside the area;
in Figure \ref{fig:1} this corresponds to the two planes intersect at the line $XF$.
In the Star Scenario, we make every non-zero wave packet pass through the origin, thus make large number of overlaps in a small area;
in Figure \ref{fig:1} this corresponds to the two planes intersect at the line $YF$.
It is not surprising that the setting of the Star Scenario is larger for large $p$, since there $|Ef|$ highly concentrates in a small area and is small elsewhere;
while the Bundle Scenario is larger for small $p$, since there $|Ef|$ is ``equally large'' in a nontrivial proportion of $B_R$.

\subsection{Many Wave Packets}
We start with a trivial example. 
Take any $U \in \left(0, R^{-\frac{1}{2}} \right)$,
let $f: [-1, 1] \rightarrow \C$ be a smooth function, satisfying $f(\omega) = 1$ for $|\omega| < \frac{U}{2}$, $|f| < 1$, and $f$ is supported in $[- U, U]$.
\begin{prop}
To make (\ref{eq:mainieq}) hold, there must be $\beta \leq 1$.
\end{prop}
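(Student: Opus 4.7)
The strategy is to send $U \to 0$: in this regime the $L^2$ mass of $f$ becomes spread across a growing number of wave packets while the left-hand side of (\ref{eq:mainieq}) decays only linearly in $U$, so matching powers of $U$ will force $\beta \leq 1$. Throughout I would restrict to $U \leq c R^{-1}$ for a small absolute constant $c$; this makes both $UR$ and $U^2 R$ small, which is convenient for the lower bound on $|Ef|$ below.

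First I would compute the three quantities appearing in (\ref{eq:mainieq}) separately. From the definition of $f$ one has $\|f\|_{L^2} \sim U^{1/2}$. For the left-hand side, for any $(x,t)\in B_R$ and $|\omega|\leq U$ the phase $\omega^2 t + \omega x$ is bounded by $UR + U^2 R \lesssim 1$, so $\mathrm{Re}\, \me^{\im(\omega^2 t + \omega x)} \geq 1/2$ on the sub-interval $|\omega|\leq U/2$ where $f\equiv 1$. This gives $|Ef(x,t)| \gtrsim U$ everywhere in $B_R$, whence
\begin{equation}
\|Ef\|_{L^p(B_R)} \gtrsim U R^{2/p}.
\end{equation}

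Next I would estimate $S$ using the wave packet decomposition produced by Proposition \ref{prop:wpd}. Since $f$ is supported in the $R^{-1/2}$-neighborhood of $\theta = 0$, only the $O(1)$ frequency bins adjacent to $0$ carry any mass, so effectively $f_0\approx f$, and $Ef_0(\cdot,0)$ is, up to Schwartz tails, the inverse Fourier transform of $f$, namely a sinc-like profile of height $\sim U$ and width $\sim U^{-1}$. Because $R^{1/2} \ll U^{-1}$, this profile is essentially constant on the $v$-grid scale, and the Hardy--Littlewood maximal function stays of size $\sim U$ throughout $|v| \lesssim U^{-1}$. Hence $|c_{0,v}| \sim R^{1/4} U$ for each of the $\sim (UR^{1/2})^{-1}$ relevant values of $v$, giving
\begin{equation}
M \sim U R^{1/4}, \qquad S \sim U^{1/2} R^{1/4},
\end{equation}
in agreement with the normalization $\sum_{\theta,v}|c_{\theta,v}|^2 \sim \|f\|_{L^2}^2 \sim U$ from (\ref{eq:wpd:l2}).

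Substituting these three estimates into (\ref{eq:mainieq}) yields $U R^{2/p} \lesssim R^{\alpha+\epsilon}(U^{1/2}R^{1/4})^{\beta-\epsilon} U^{1/2}$; collecting powers of $U$ reduces this to $U^{(1 - \beta + \epsilon)/2} \lesssim R^{K(\alpha,\beta,p,\epsilon)}$ for some fixed exponent $K$. Since $R$ is fixed while $U$ is free to shrink toward $0$, the inequality can hold uniformly only if the exponent of $U$ is non-negative, i.e.\ $\beta \leq 1 + \epsilon$; letting $\epsilon \to 0$ gives $\beta \leq 1$. The one step requiring care is the estimate of $S$: one must verify that the explicit construction in Proposition \ref{prop:wpd} really distributes the mass of $f$ roughly equally across all $\sim (UR^{1/2})^{-1}$ spatial bins, rather than concentrating it in a single $c_{\theta,v}$. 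This is the main bookkeeping, but it follows from the fact that the sinc-like profile $Ef_0(\cdot,0)$ is uniformly of size $\sim U$ on its main lobe $|v| \lesssim U^{-1}$.
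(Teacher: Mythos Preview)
Your proof is correct and follows essentially the same approach as the paper: exhibit a narrow bump $f$ supported in $[-U,U]$, compute $\|f\|_{L^2}\sim U^{1/2}$, $M\lesssim R^{1/4}U$, and a lower bound on $\|Ef\|_{L^p(B_R)}$, then let $U\to 0$ to force $\beta\le 1+\epsilon$. The only cosmetic difference is that you take $U\le cR^{-1}$ and use $|Ef|\gtrsim U$ on all of $B_R$ to get $\|Ef\|_{L^p(B_R)}\gtrsim UR^{2/p}$, whereas the paper allows $U<R^{-1/2}$ and uses the weaker $\|Ef\|_{L^p(B_R)}\gtrsim UR^{1/p}$ coming from the strip $|t|\lesssim 1$; either lower bound suffices since only the $U$-exponent matters. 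One small clarification on your final paragraph: what the argument actually needs is the \emph{upper} bound $|c_{\theta,v}|\lesssim R^{1/4}U$ for all $\theta,v$ (equivalently $S\lesssim U^{1/2}R^{1/4}$), and this follows simply from the global bound $|Ef_\theta(\cdot,0)|\lesssim U$, not just its size on the main lobe.
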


\begin{proof}
Under the wave packet decomposition (of size $R$) in Proposition \ref{prop:wpd}, there is $f_0 = f$ and any $f_{\theta} = 0$, for $\theta \neq 0$.
By direct computation or Theorem \ref{thm:sp}, there is $|Ef_0(x, 0)| \lesssim U(1 + |x|U)^{-N}$ for any $N > 0$, and $|Ef_0(x, t)| \sim U$ for $|x| \lesssim U^{-1}$ and $|t| \lesssim 1$.
Then each $|c_{0, v}| \lesssim R^{\frac{1}{4}}U $, 
$M \lesssim R^{\frac{1}{4}}U$
and $\| Ef_0 \|_{L^p(B_R)} \gtrsim R^{\frac{1}{p}} U$.
Also $\| f \|_{L^2} \sim U^{\frac{1}{2}}$.

Then (\ref{eq:mainieq}) becomes
\begin{equation}
R^{\frac{1}{p}} U \lesssim_{\epsilon} R^{\alpha + \epsilon} \left( R^{\frac{1}{4}}U \right)^{\beta - \epsilon} U^{\frac{1 - \beta + \epsilon}{2}} .
\end{equation}
Since $U$ can be taken arbitrarily small, there must be $1 \geq \beta - \epsilon + \frac{1 - \beta + \epsilon}{2} = \frac{1}{2} + \frac{\beta}{2} - \frac{\epsilon}{2}$, for any $\epsilon > 0$.
This implies that $\beta \leq 1$.
\end{proof}


\subsection{Bundle Scenario}  \label{sec:ex:bs}

Then consider the configuration where the wave packets take consecutive $\sim N$ directions (or $\theta$ in the decomposition),
and in every direction there are $\sim N$ locations (or $v$ in the decomposition)
centered at the origin.
More specifically,
for any $R > 0$, and positive integer $N < R^{\frac{1}{2}}$,
consider the following function
\begin{equation}
f_{R, N} : [-1, 1]\rightarrow \C , \quad f_{R, N}(\omega) = \sum_{n = -N}^N \Theta_{R, N}(\omega - n R^{-\frac{1}{2}})   ,
\end{equation}
where $\Theta_{R, N}: [-1, 1] \rightarrow \R$ is the smooth function,
such that $|\Theta_{R, N}| \leq 1$, $\Theta_{R, N}(\omega) = 1$ for $\omega \in \left[-\frac{1}{2}R^{-\frac{1}{2}}N^{-1}, \frac{1}{2}R^{-\frac{1}{2}}N^{-1}\right]$, and $\Theta_{R, N}$ is supported in $\left[-R^{-\frac{1}{2}}N^{-1}, R^{-\frac{1}{2}}N^{-1}\right]$.
\begin{prop}
To make (\ref{eq:mainieq}) hold, there must be $4p\alpha + p \geq 6$ and $2p \alpha - p \beta + p \geq 4$.
\end{prop}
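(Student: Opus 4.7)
The plan is to test (\ref{eq:mainieq}) against $f = f_{R,N}$ and to vary the parameter $N \in [1, R^{1/2}]$ in order to extract the two constraints. First I would determine the wave packet data for $f_{R,N}$: each of the $\sim N$ narrow bumps of width $R^{-1/2}N^{-1}$ sits at a distinct frequency $\theta_n = nR^{-1/2}$, so it lies in a single frequency tile of the decomposition in Proposition \ref{prop:wpd}. A direct stationary-phase computation shows that $|Ef_{\theta_n}(\cdot,0)| \sim R^{-1/2}N^{-1}$ on a spatial envelope of size $\sim R^{1/2}N$, which forces $|c_{\theta_n, v}| \sim R^{-1/4}N^{-1}$ for $|v| \lesssim R^{1/2}N$ and negligible outside. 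Summing over all wave packets gives $\|f_{R,N}\|_{L^2} \sim R^{-1/4}$, $M \sim R^{-1/4}N^{-1}$, and therefore $S \sim N^{-1}$.

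Next I would derive two complementary lower bounds on $\|Ef_{R,N}\|_{L^p(B_R)}$. Substituting $\omega = \theta_n + \xi$ gives
\begin{equation*}
Ef_{R,N}(x,t) = \sum_n e^{\im(\theta_n^2 t + \theta_n x)} \int \Theta_{R,N}(\xi)\, e^{\im \xi(2\theta_n t + x) + \im \xi^2 t}\,\diff\xi.
\end{equation*}
On the box $\{|x| \leq cR^{1/2}/N,\ |t| \leq cR/N^2\}$ (for a small absolute constant $c$), every inner and outer phase is bounded by $O(c)$ uniformly in $|n| \leq N$, so all $\sim N$ contributions add constructively and yield $|Ef_{R,N}| \gtrsim R^{-1/2}$. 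Since this box has area $\sim R^{3/2}N^{-3}$, we obtain the first lower bound
\begin{equation*}
\|Ef_{R,N}\|_{L^p(B_R)} \gtrsim R^{3/(2p) - 1/2}\, N^{-3/p}.
\end{equation*}
Separately, Plancherel's theorem in the $x$-variable gives $\int |Ef_{R,N}(\cdot, t)|^2\,\diff x \sim R^{-1/2}$ for every fixed $t$. The envelope analysis above shows that essentially all of this mass lies inside $\{|x| \lesssim NR^{1/2}\} \subset B_R$ whenever $N \leq R^{1/2}$, so $\|Ef_{R,N}\|_{L^2(B_R)} \gtrsim R^{1/4}$. H\"older's inequality with $|B_R| \sim R^2$ then gives the second lower bound
\begin{equation*}
\|Ef_{R,N}\|_{L^p(B_R)} \gtrsim R^{2/p - 3/4}.
\end{equation*}

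Finally I would pair each lower bound with (\ref{eq:mainieq}) for the appropriate choice of $N$. Taking $N = 1$ (so $S \sim 1$) in the first bound forces $R^{3/(2p) - 1/2} \lesssim R^{\alpha + \epsilon - 1/4}$ for all large $R$; sending $\epsilon \to 0$ yields $\alpha \geq 3/(2p) - 1/4$, i.e.\ $4p\alpha + p \geq 6$. Taking $N = R^{1/2}$ (so $S \sim R^{-1/2}$) in the second bound forces $R^{2/p - 3/4} \lesssim R^{\alpha + \epsilon}\,R^{-(\beta - \epsilon)/2}\,R^{-1/4}$, which yields $\alpha - \beta/2 \geq 2/p - 1/2$, i.e.\ $2p\alpha - p\beta + p \geq 4$.

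The main technical difficulty is the constructive interference estimate behind the first lower bound: one must pick the constant $c$ small enough that both the inner phases $\xi(2\theta_n t + x) + \xi^2 t$ (of size $O(c/N^2)$) and the outer phases $\theta_n^2 t + \theta_n x$ (of size $O(c)$) stay within a short arc uniformly in $|n| \leq N$, so that the real parts of the $\sim N$ summands all have the same sign and magnitude $\sim R^{-1/2}N^{-1}$. Once that region of coherence is established, the Plancherel--H\"older step and the algebra producing the two linear constraints are routine.
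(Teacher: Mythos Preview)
Your argument is correct and reaches the same two constraints, but by a genuinely different and more elementary route than the paper. The paper establishes a single $N$-dependent lower bound $\|Ef_{R,N}\|_{L^p(B_R)} \gtrsim_\epsilon R^{3/(2p)-1/2-\epsilon} N^{1/p-1/2}$ valid for \emph{all} $1\le N\le R^{1/2}$, via a rather involved interpolation: it proves a sharp $L^4$ lower bound using a positivity trick for the convolutions $\widehat{\eta}_{R^{1-\delta}}\ast f_{\theta_1}^*\diff\sigma\ast f_{\theta_2}^*\diff\sigma$, pairs it with \emph{upper} bounds at $p=2$ (unitarity) and $p=6$ (the $l^2$ decoupling theorem), and then applies H\"older in both directions to reach intermediate $p$. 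You instead bypass this machinery with two direct lower bounds---a constructive-interference estimate on the coherence box (used only at $N=1$) and a Plancherel/H\"older estimate (used only at $N\sim R^{1/2}$)---which suffice because only the two endpoint values of $N$ are actually needed to extract the constraints. The paper's approach gives a stronger pointwise-in-$N$ statement, but yours avoids invoking decoupling entirely. One small point to tidy: at $N\sim R^{1/2}$ the envelopes $|x+2\theta_n t|\lesssim R^{1/2}N$ have width comparable to $R$, so the inclusion $\{|x|\lesssim NR^{1/2}\}\subset B_R$ can fail by a fixed constant factor; this is harmless (take $N=cR^{1/2}$ for a small absolute $c$, or test (\ref{eq:mainieq}) on $B_{CR}$ instead), but you should say which fix you are using.
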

\begin{proof}
We can do a wave packet decomposition of $f_{R,N}$ in the following way.
For $\theta = nR^{-\frac{1}{2}}$,
$-N\leq n \leq N$, denote $f_{\theta}: [-1, 1] \rightarrow \C$, $f_{\theta}(\omega)= \Theta_{R, N}(\omega - \theta) $.
Then each $f_{\theta}$ is supported in an interval of length $2R^{- \frac{1}{2}}$
centered at $\theta$;
and we compute $Ef_{\theta}$ as
\begin{equation} \label{eq:bs:onecap}
\int \me^{\im (\omega^2 t + \omega x)} f_{\theta}  \diff \omega 
= \me^{\im \left( n^2 R^{-1} t + n R^{-\frac{1}{2}}x \right)}  \int \me^{\im \left( \omega^2 t + \omega \left(2 n R^{-\frac{1}{2}} t  + x \right) \right) } \Theta_{R, N}(\omega)  \diff \omega  .
\end{equation}
By Theorem \ref{thm:sp},
the absolute value of (\ref{eq:bs:onecap}) is
$\sim R^{-\frac{1}{2}}N^{-1} $, 
when $\left| 2 n R^{-\frac{1}{2}} t + x \right| < R^{\frac{1}{2}}N$ and $(x, t) \in B_R$,
and fast decays outside this area.
This means that we can split
each $f_{\theta}$ into $2N$ wave packets
of equal size,
thus there is
\begin{equation}  \label{eq:ssize}
S = \frac{\left(\frac{1}{2N} \int  |\Theta_{R, N}(\omega)|^2 \diff \omega \right)^{\frac{1}{2}} }{\|f_{R, N}\|_{L^2}}
\sim N^{-1} .
\end{equation}

The $L^2$-norm of $f_{R, N}$ can be easily computed as
\begin{equation}
\|f_{R, N}\|_{L^2} = \left(\int |f_{R,N}(\omega)|^2 d\omega \right)^{\frac{1}{2}} = \left( (2N + 1) 2R^{-\frac{1}{2}}N^{-1} \right)^{\frac{1}{2}}
\sim R^{-\frac{1}{4}}  .
\end{equation}

Now we show that
for $2 \leq p \leq 6$ and any $\epsilon > 0$, there is
\begin{equation}  \label{eq:eg:bs:pf1}
\|Ef_{R, N}\|_{L^p(B_R)} \gtrsim_{\epsilon, p} R^{\frac{3}{2p}-\frac{1}{2}-\epsilon} N^{\frac{1}{p} - \frac{1}{2}} .
\end{equation}

For $p = 2$, since Fourier transform is unitary, and the $L^2$ norm is conserved, there is
\begin{equation}
\|Ef_{R, N}\|_{L^2(B_R)}^2 
\leq \|Ef_{R, N}\|_{L^2(\R\times [-R, R])}^2
= 2R \|Ef_{R, N}|_{t=0} \|_{L^2_x}^2
= 2R \| f \|_{L^2}^2
\sim R^{\frac{1}{2}}  
.
\end{equation}

For $p=6$, 
as seen in the proof of Proposition \ref{prop:dec}, 
there is
\begin{equation}
\|Ef_{R, N}\|_{L^6(B_R)}
\lesssim_{\epsilon}  R^{\epsilon} \left( \sum_{\theta} \| Ef_{\theta} \cdot \eta_R \|_{L^6}^2 \right)^{\frac{1}{2}}  .
\end{equation}
For each $\theta$,
we have that
\begin{equation}
\| Ef_{\theta} \cdot \eta_R \|_{L^6}
\sim \left( R^{-3}N^{-6}\cdot NR^{\frac{3}{2}} \right)^{\frac{1}{6}} = R^{-\frac{1}{4}} N^{-\frac{5}{6}} ,
\end{equation}
and thus
\begin{equation}
\|Ef_{R, N}\|_{L^6(B_R)}
\lesssim_{\epsilon} R^{- \frac{1}{4} + \epsilon} N^{- \frac{1}{3}}  .
\end{equation}

For $p=4$,
let $\delta = \frac{1}{10}$.
Then $\eta_{R^{1-\delta}}$ fast decays outside $B_{R^{1-\delta}}$,
and we have
\begin{equation}
\left\|Ef_{R,N} \cdot \eta_{R^{1 - \delta}}^{\frac{1}{2}} \right\|_{L^4}^4 \lesssim
\left\|Ef_{R,N} \right\|_{L^4(B_R)}^4 + R^{-100} \| f_{R, N} \|_{L^2}^4.
\end{equation}
By Plancherel Theorem
we have that
\begin{multline}
\left\| Ef_{R,N} \cdot \eta_{R^{1 - \delta}}^{\frac{1}{2}}  \right\|_{L^4}^4 = \int  \left| \eta_{R^{1 - \delta}} \sum_{\theta} Ef_{\theta} \right|^4
= \sum_{\theta_1, \theta_2, \theta_3, \theta_4} \int |\eta_{R^{1-\delta}}|^2 Ef_{\theta_1}Ef_{\theta_2} \overline{ Ef_{\theta_3} Ef_{\theta_4} }
\\
= \sum_{\theta_1, \theta_2, \theta_3, \theta_4 } \int \left( \widehat{\eta}_{R^{1 - \delta}} \ast {f}_{\theta_1}^* \diff \sigma \ast {f}_{\theta_2}^* \diff \sigma \right) \overline{\left( \widehat{\eta}_{R^{1 - \delta}} \ast {f}_{\theta_3}^* \diff \sigma \ast {f}_{\theta_4}^* \diff \sigma \right)}  ,
\end{multline}
and for any $\theta_1, \theta_2$, the convolution
$\widehat{\eta}_{R^{1 - \delta}} \ast {f}_{\theta_1}^* \diff \sigma \ast {f}_{\theta_1}^* \diff \sigma$
is real and non-negative in the 
Minkowski sum of their support,
and vanishes outside.
Then for any $\theta_1, \theta_2, \theta_3, \theta_4 $, the integral
\begin{equation}
\int |\eta_{R^{1 - \delta}}|^2 Ef_{\theta_1}Ef_{\theta_2} \overline{ Ef_{\theta_3} Ef_{\theta_4} }
\end{equation}
is non-negative.
Then there is
\begin{multline}
\left\|\eta_{R^{1 - \delta}}^{\frac{1}{2}} Ef_{R,N}\right\|_{L^4}^4 \gtrsim 
\sum_{\theta_1, \theta_2} \int |\eta_{R^{1 - \delta}}|^2 |Ef_{\theta_1}|^2 |Ef_{\theta_2}|^2 
=   \int |\eta_{R^{1 - \delta}}|^2 \left( \sum_{\theta} |Ef_{\theta}|^2 \right)^2  \\
\gtrsim  \int_{\mathcal{A}} \left( \sum_{\theta} |Ef_{\theta}|^2 \right)^2  
\sim  R^{\frac{3}{2}}N \cdot R^{-2} N^{-2} = R^{-\frac{1}{2}} N^{-1}  ,
\end{multline}
where $\mathcal{A}$ is the area defined by $|x|\leq \frac{1}{2}R^{\frac{1}{2}}N$ and $|t| \leq \frac{R}{4}$,
where each $|Ef_{\theta}| \sim R^{-\frac{1}{2}}N^{-1}$, and $|\eta_{R^{1 - \delta}}| \sim 1$.
Thus when $R$ large enough,
there is
\begin{equation}
\|Ef_{R,N}\|_{L^4(B_R)}^4 \gtrsim R^{-\frac{1}{2}} N^{-1}  .
\end{equation}

Finally, by H\"{o}lder's inequality,
for any $2\leq p < 4$
there is
\begin{equation}
\|Ef_{R,N}\|_{L^p(B_R)} \geq \frac{\|Ef_{R,N}\|_{L^4(B_R)}^{\frac{12}{p} - 2} }{\|Ef_{R,N}\|_{L^6(B_R)}^{\frac{12}{p} - 3} }
\gtrsim_{\epsilon} R^{\frac{3}{2p}-\frac{1}{2}-\left( \frac{12}{p} - 3 \right)\epsilon} N^{\frac{1}{p} - \frac{1}{2}}  ,
\end{equation}
and for any $4 < p \leq 6$
there is
\begin{equation}
\|Ef_{R,N}\|_{L^p(B_R)} \geq \frac{\|Ef_{R,N}\|_{L^4(B_R)}^{2 - \frac{4}{p}} }{\|Ef_{R,N}\|_{L^2(B_R)}^{2 - \frac{4}{p}} }
\gtrsim R^{\frac{3}{2p}-\frac{1}{2}} N^{\frac{1}{p} - \frac{1}{2}}  .
\end{equation}
And these imply (\ref{eq:eg:bs:pf1}).

Finally, if (\ref{eq:mainieq}) is true for some $p, \alpha, \beta$, there is
\begin{equation}
R^{\frac{3}{2p} - \frac{1}{2} - \epsilon } N^{\frac{1}{p} - \frac{1}{2}} \lesssim_{\epsilon} R^{\alpha + \epsilon} N^{- \beta + \epsilon} R^{-\frac{1}{4}} .
\end{equation}
By taking $N = 1$ and $N = R^{\frac{1}{2}}$, we get $4p \alpha + p \geq 6$ and $2 p \alpha - p \beta + p \geq 4$, respectively.
\end{proof}

\subsection{Star Scenario}  \label{sec:ex:sc}

Let us consider another configuration,
where for consecutive $N$ directions,
there is exactly one wave packet at each direction, passing through the origin.

Namely,
for any $R > 0$, and positive integer $N < R^{\frac{1}{2}}$,
consider the following function
\begin{equation}
f_{R, N} : [-1, 1]\rightarrow \C , \quad f_{R, N}(\omega) = \sum_{n = -N}^N  \Phi_{R, N}(\omega - n R^{-\frac{1}{2}})   ,
\end{equation}
where $\Phi_{R, N}: [-1, 1] \rightarrow \R$ is the smooth function,
such that $|\Phi_{R, N}| \leq 1$, $\Phi_{R, N}(\omega) = 1$ for $\omega \in \left[-\frac{1}{2}R^{-\frac{1}{2}}, \frac{1}{2}R^{-\frac{1}{2}}\right]$, and $\Phi_{R, N}$ is supported in $\left[- R^{-\frac{1}{2}}, R^{-\frac{1}{2}} \right]$.

We also require that $\Phi_{R, N}$ is an even function, and for any $\omega \in \left[0, \frac{1}{2}R^{-\frac{1}{2}}\right]$, there is $\Phi_{R, N}(\omega) = \Phi_{R, N}\left( \frac{1}{2}R^{-\frac{1}{2}} - \omega \right)$.
Then $f_{R, N}(\omega) = 1$ for any $\omega \in \left[-\left(N + \frac{1}{2} \right)R^{-\frac{1}{2}}, \left(N + \frac{1}{2}\right)R^{-\frac{1}{2}}\right]$,
and $f_{R, N}$ is real and supported in
$\left[-\left(N + 1 \right)R^{-\frac{1}{2}}, \left(N + 1\right)R^{-\frac{1}{2}}\right]$.
\begin{prop}
To make (\ref{eq:mainieq}) hold, there must be $4p\alpha + p \geq 6$ and $4 \alpha - \beta \geq 0$.
\end{prop}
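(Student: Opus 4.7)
The plan is to extract the three quantities entering \eqref{eq:mainieq} for the star function $f_{R,N}$ -- namely $\|f_{R,N}\|_{L^2}$, the wave-packet ratio $S$, and a lower bound for $\|Ef_{R,N}\|_{L^p(B_R)}$ -- and then obtain the two claimed inequalities by specializing to the extreme values $N=1$ and $N=R^{1/2}$. The first two quantities are essentially bookkeeping: since $\Phi_{R,N}$ is supported in an interval of length $\sim R^{-\frac{1}{2}}$ and $f_{R,N}$ is the union of $2N+1$ nearly-disjoint bumps of that type, direct integration gives $\|f_{R,N}\|_{L^2} \sim N^{\frac{1}{2}} R^{-\frac{1}{4}}$. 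In the wave-packet decomposition of Proposition~\ref{prop:wpd}, the natural grouping is to take each bump $\Phi_{R,N}(\omega - nR^{-\frac{1}{2}})$ as the full $f_\theta$ for $\theta=nR^{-\frac{1}{2}}$, and a short stationary-phase computation (as in Proposition~\ref{prop:pm}) shows that the associated coefficients satisfy $|c_{\theta,v}| \sim R^{-\frac{1}{4}}$ for $v$ in an $O(R^{\frac{1}{2}})$ window around $0$ and fast decay beyond. Hence $\max_{\theta,v}|c_{\theta,v}| \sim R^{-\frac{1}{4}}$ and
\begin{equation*}
S \sim \frac{R^{-\frac{1}{4}}}{N^{\frac{1}{2}} R^{-\frac{1}{4}}} = N^{-\frac{1}{2}}.
\end{equation*}

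The heart of the argument is the lower bound on $\|Ef_{R,N}\|_{L^p(B_R)}$, which should exploit constructive interference at the origin. After substituting $\omega = nR^{-\frac{1}{2}} + \eta$, one can write
\begin{equation*}
Ef_{R,N}(x,t) = \sum_{n=-N}^{N} \me^{\im(n^2 R^{-1} t + nR^{-\frac{1}{2}} x)}\int \Phi_{R,N}(\eta)\,\me^{\im(\eta^2 t + \eta(x + 2nR^{-\frac{1}{2}} t))}\diff\eta.
\end{equation*}
I plan to restrict attention to the rectangle $\{|x| \leq c R^{\frac{1}{2}}/N,\; |t| \leq cR/N^{2}\}$ for a small absolute constant $c$. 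Inside the $\eta$-integral the combined phase is bounded by a constant over the whole integration range, so each inner integral is $\sim R^{-\frac{1}{2}}$; meanwhile the outer prefactor phase $n^2 R^{-1} t + nR^{-\frac{1}{2}} x$ varies by at most a small constant as $n$ ranges over $[-N,N]$, since both contributions are majorized by $c$. Consequently the $2N+1$ summands cohere and $|Ef_{R,N}(x,t)| \gtrsim NR^{-\frac{1}{2}}$ on a region of area $\sim R^{\frac{3}{2}}/N^{3}$, giving
\begin{equation*}
\|Ef_{R,N}\|_{L^p(B_R)} \gtrsim R^{\frac{3}{2p} - \frac{1}{2}}\,N^{1 - \frac{3}{p}}.
\end{equation*}

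Plugging these estimates into \eqref{eq:mainieq} yields
\begin{equation*}
R^{\frac{3}{2p} - \frac{1}{2}}\,N^{1 - \frac{3}{p}} \lesssim_{\epsilon} R^{\alpha + \epsilon - \frac{1}{4}}\,N^{\frac{1}{2} - \frac{\beta}{2} + \frac{\epsilon}{2}}.
\end{equation*}
Choosing $N = 1$ and sending $R \to \infty$ forces $\alpha \geq \frac{3}{2p} - \frac{1}{4}$, i.e.\ $4p\alpha + p \geq 6$; choosing $N = R^{\frac{1}{2}}$ (the largest value still admitting the construction) makes the $R$-exponent on the right reduce to $\alpha - \frac{\beta}{4} + O(\epsilon)$ while the left-hand side becomes $R^{0}$, forcing $4\alpha - \beta \geq 0$. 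The only nontrivial step I expect is the uniform phase bound that produces the $NR^{-\frac{1}{2}}$ lower bound over the full rectangle: I need to track the linear and quadratic contributions simultaneously and keep $c$ small enough that both prefactor terms and the inner integral stay well-aligned. Once this is in place, the rest of the argument is a direct comparison of exponents, parallel to the Bundle Scenario in Section~\ref{sec:ex:bs}.
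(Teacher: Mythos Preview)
Your proposal is correct and follows essentially the same approach as the paper: compute $\|f_{R,N}\|_{L^2}\sim N^{1/2}R^{-1/4}$ and $S\sim N^{-1/2}$, obtain the lower bound $\|Ef_{R,N}\|_{L^p(B_R)}\gtrsim R^{\frac{3}{2p}-\frac{1}{2}}N^{1-\frac{3}{p}}$ from constructive interference on the rectangle $\{|x|\lesssim R^{1/2}/N,\ |t|\lesssim R/N^2\}$, and then specialize to $N=1$ and $N=R^{1/2}$. The only cosmetic difference is that the paper exploits the tiling property of the bumps (so that $f_{R,N}\equiv 1$ on an interval of length $\sim NR^{-1/2}$) to bound $Ef_{R,N}$ directly as a single oscillatory integral, whereas you keep the sum over $n$ and argue phase alignment term by term; both arguments are equivalent, and your reference to Proposition~\ref{prop:pm} for the stationary-phase step should presumably point to Theorem~\ref{thm:sp} or Proposition~\ref{prop:wpd} instead.
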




\begin{proof}
The wave packet decomposition of $f_{R,N}$ is as following:
for $\theta = nR^{-\frac{1}{2}}$,
$-N\leq n \leq N$, denote $f_{\theta}: [-1, 1] \rightarrow \C$, $f_{\theta}(\omega)= \Phi_{R, N}(\omega - \theta) $.
Then each $f_{\theta}$ is supported in an interval of length $2R^{- \frac{1}{2}}$
centered at $\theta$.
We compute $Ef_{\theta}$ as
\begin{equation}   \label{eq:ex:st}
\int \me^{\im (\omega^2 t + \omega x)} f_{\theta}  \diff \omega 
=  \me^{\im \left( n^2 R^{-1} t + n R^{-\frac{1}{2}}x \right)}\int \me^{\im \left( \omega^2 t + \omega \left(2 n R^{-\frac{1}{2}} t  + x \right) \right) } \Phi_{R, N}(\omega)  \diff \omega  .
\end{equation}
The absolute value of (\ref{eq:ex:st}) $\sim R^{-\frac{1}{2}}$
inside the area defined by $\left| 2 n R^{-\frac{1}{2}} t + x \right| < 2R^{\frac{1}{2}}$
and $|t| < R$;
and by Theorem \ref{thm:sp},
it also fast decays outside.
This means that each $f_{\theta}$ corresponds to $\sim 1$ wave packets,
of equal size,
in wave packet decomposition.
Thus we have $S \sim N^{-\frac{1}{2}}$.

As for the $L^2$-norm of $f_{R, N}$, there is
\begin{equation}
\|f_{R, N}\|_{L^2} \sim \left( \int_{-\left(N + \frac{1}{2}\right)R^{-\frac{1}{2}}}^{\left(N + \frac{1}{2}\right)R^{-\frac{1}{2}}} 1 \diff \omega  \right)^{\frac{1}{2}}
\sim R^{-\frac{1}{4}} N^{\frac{1}{2}} .
\end{equation}

Then it remains to estimate $\|Ef_{R, N}\|_{L^p(B_R)}$
for $2 \leq p \leq 6$.
Let $\mathcal{C}$ be the area defined by $|t| \leq \frac{1}{10}RN^{-2}$,
and $|x| \leq \frac{1}{10}R^{\frac{1}{2}}N^{-1}$.
Then
for any $(x, t)\in \mathcal{C}$,
there is
\begin{equation}
|Ef_{R, N}(x, t)| = 
\left|
\int \me^{\im (\omega^2 t + \omega x)} f_{R, N}  \diff \omega 
\right|
\sim R^{\frac{1}{2}}N  ,
\end{equation}
since $f_{R, N}$ is real and supported in
$\left[-\left(N + 1 \right)R^{-\frac{1}{2}}, \left(N + 1\right)R^{-\frac{1}{2}}\right]$.
We conclude that
\begin{equation}
\|Ef_{R, N}\|_{L^p(B_R)} \geq
\|Ef_{R, N}\|_{L^p(\mathcal{C})} \gtrsim
\left( R^{\frac{3}{2}}N^{-3} \cdot N^p R^{\frac{p}{2}}  \right)^{\frac{1}{p}}
\geq R^{\frac{3}{2p} - \frac{1}{2}} N^{1 - \frac{3}{p}}   .
\end{equation}

To make (\ref{eq:mainieq}) hold, there must be
\begin{equation}
R^{\frac{3}{2p} - \frac{1}{2}} N^{1 - \frac{3}{p}}
\lesssim_{\epsilon}
R^{\alpha + \epsilon} N^{\frac{\epsilon - \beta}{2}} R^{-\frac{1}{4}} N^{\frac{1}{2}} ,
\end{equation}
and via taking $N = 1$ and $N=R^{\frac{1}{2}}$ we conclude $4p\alpha + p \geq 6$ and $4 \alpha - \beta \geq 0$, respectively.
\end{proof}

\bigskip
\section{Conjecture}  \label{sec:cj}

We make the conjecture that (\ref{eq:thm:main:rv}) is sufficient for 
 (\ref{eq:mainieq}) to be true.
It suffices to consider the follow case.
\begin{conj}
The inequality (\ref{eq:mainieq}) holds for $p = \frac{14}{3}$,
$\alpha = \frac{1}{14}$
and $\beta = \frac{2}{7}$. 
\end{conj}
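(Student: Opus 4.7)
The plan is to prove this conjecture by merging the polynomial partitioning argument of Proposition \ref{prop:pm} with the $l^2$ Decoupling Theorem used in Proposition \ref{prop:dec}, following the general philosophy (articulated at the end of Section \ref{sec:res}) that the remaining exponents should be reachable by a hybrid of the two techniques. Concretely, I would perform a double induction on $R$ and on the number $K(f)$ of nonzero wave packets, reducing via Corollary \ref{cor:wpd:bd} to the case $K(f) < \infty$, and run a polynomial partitioning at degree $D$ applied to $\chi_{B_R}|Ef|^{14/3}$. Let $\mathcal{N}$ be the $R^{1/2+\delta}$-neighborhood of $Z(\mathcal{F}_D)$, and split
\begin{equation}
\| Ef \|_{L^{14/3}(B_R)}^{14/3} = \| Ef \|_{L^{14/3}(\mathcal{N})}^{14/3} + \sum_{i}\| Ef \|_{L^{14/3}(O_i)}^{14/3}.
\end{equation}

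In the cellular case (mass concentrated off $\mathcal{N}$), the argument is essentially identical to Case 1 of Proposition \ref{prop:pm}: a line meets $Z(\mathcal{F}_D)$ in $\lesssim D$ points, so on average each cell is intersected by only a fraction $\lesssim 1/D$ of the wave packets; choosing a good cell $O_{i_o}$ and invoking the induction hypothesis with the exponents $\alpha = 1/14$, $\beta = 2/7$ closes this case once $D$ is taken large in terms of $\epsilon$. In the wall case (mass concentrated in $\mathcal{N}$), I would again cover $B_R$ by $\sim G^2$ squares $Q$ of side $R/G$, and for each split $f = f_{Q,\mathrm{trans}} + f_{Q,\mathrm{non}}$ according to whether wave packets meet $Q$ through a transverse component $\mathcal{N}_i$. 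For the transverse part, rescaling to a ball of radius $R/G$ and applying induction with Lemma \ref{lemma:wpd:cs} gives, after choosing $G$ large, a subcritical contribution.

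The genuine novelty must come in the tangential part $g_{\mathrm{tang}}$. At $p = 5$ the crude multiplicity-counting bound in (\ref{eq:pp:pf9}) was sufficient, but at $p = 14/3$ that same bound overshoots the target; conversely, naive decoupling at $p = 6$ loses all of the polynomial partitioning gain. The plan is to cover $\mathcal{N}$ by balls $B'$ of intermediate radius $R' \sim R/G$, so that inside each $B'$ the tangentially interacting wave packets still lie in the $R'^{1/2+\delta}$-neighborhood of a parabolic piece of $P$ (after rescaling), and apply Theorem \ref{thm:dec} on these balls to the rescaled waves, gaining $R'^{-\epsilon}$ decoupling on top of the multiplicity bound $\lesssim D^2 G R^{2\delta}$. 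Combining this with the Wongkew area estimate $\Vol(\mathcal{N}\cap B_R)\lesssim DR^{3/2+\delta}$ and the exponent arithmetic $(4\alpha,\beta,p)=(2/7,2/7,14/3)$, which saturates simultaneously the three linear constraints of (\ref{eq:thm:main:rv}) meeting at $F$, should produce exactly the bound $R^{1/14+\epsilon}M^{2/7-\epsilon}\|f\|_{L^2}^{5/7+\epsilon}$.

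The main obstacle is precisely this tangential interface. Decoupling exploits cancellation in frequency but ignores the spatial organization of tubes, while polynomial partitioning organizes tubes but ignores phase cancellation; the two measure different things, as noted in the introduction. Making them compatible requires a bookkeeping of wave packets at \emph{two} simultaneous scales, $R$ and $R/G$, along a single algebraic variety $Z(\mathcal{F}_D)$, and verifying that the rescaled wave packets still obey a Proposition \ref{prop:wpd}-type decomposition with controlled coefficients. One also has to ensure that the $R^{O(\epsilon)}$ losses from iterating polynomial partitioning, from decoupling, and from the two-scale wave packet passage do not accumulate to overwhelm the gain. Overcoming this bookkeeping, together with a careful choice of $D$, $G$ and $\delta$ so that the inductive inequality closes, is where I expect the real work to lie.
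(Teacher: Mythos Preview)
This statement is a \emph{conjecture} in the paper, not a theorem: the paper presents no proof of it whatsoever. Section~\ref{sec:cj} simply records the conjecture, notes that it corresponds to the vertex $F$ in Figure~\ref{fig:1}, and observes that (via H\"older and Lemma~\ref{lemma:ext}) it would settle all the remaining open cases. There is therefore no ``paper's own proof'' against which to compare your proposal.

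Your outline is a sensible strategy and is in fact exactly the direction the paper suggests in the introduction (``a combination of the approaches used in this text will eventually settle this problem''). But you should be aware that what you have written is a research plan for an open problem, not a proof. The cellular and transverse portions of your argument do go through essentially as in Proposition~\ref{prop:pm}; the difficulty, as you correctly isolate, is entirely in the tangential contribution. The step where you propose to ``apply Theorem~\ref{thm:dec} on these balls to the rescaled waves, gaining $R'^{-\epsilon}$ decoupling on top of the multiplicity bound'' is not a proof step but a hope: decoupling at scale $R'$ gives a square-function bound $\bigl(\sum_\theta \|Ef_\theta\|_{L^6}^2\bigr)^{1/2}$, and to convert that into a bound involving $M^{2/7}\|f\|_{L^2}^{5/7}$ you would need precise control on how many tangential wave packets occupy each cap $\theta$ inside each ball $B'$, not just the total multiplicity $\lesssim D^2GR^{2\delta}$ at a point. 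The paper's $p=6$ argument (Proposition~\ref{prop:dec}) and $p=5$ argument (Proposition~\ref{prop:pm}) use that information in incompatible ways, and neither supplies the missing estimate. Until that tangential bookkeeping is actually carried out with explicit exponents, the conjecture remains open.
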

This corresponds to point $F$ in Figure \ref{fig:1}.
Again, by H\"{o}lder's inequality and Lemma \ref{lemma:ext}, this implies that all possible $p, \alpha, \beta$ are given by (\ref{eq:thm:main:rv}), and the remaining cases can be settled.

\bibliographystyle{halpha}
\bibliography{bibliography}

\end{document}